\newtheorem{theorem}{Theorem}
\newtheorem{lemma}{Lemma}
\newtheorem{remark}{Remark}
\newcommand{\pibar}{\overline{\Pi}}
\newcommand{\pibarinv}{\overline{\Pi}^{\leftarrow}}
\newcommand{\pibarpinv}{\overline{\Pi}^{+,\leftarrow}}
\newcommand{\pibarminv}{\overline{\Pi}^{-,\leftarrow}}
\newcommand{\pibarpminv}{\overline{\Pi}^{\pm,\leftarrow}}
\newcommand{\lambar}{\overline{\Lambda}}
\newcommand{\lampinv}{\overline{\Lambda}^{+,\leftarrow}}
\newcommand{\lamminv}{\overline{\Lambda}^{-,\leftarrow}}
\newcommand{\vsig}{\varsigma}
\newcommand{\vphi}{\varphi}
\newcommand{\veps}{\varepsilon}
\newcommand{\topr}{\stackrel{\mathrm{P}}{\longrightarrow}}
\newcommand{\todr}{\stackrel{\mathrm{D}}{\longrightarrow}}
\newcommand{\eqdr}{\stackrel{\mathrm{D}}{=}}
\newcommand{\R}{\Bbb{R}}
\newcommand{\N}{\Bbb{N}}
\newcommand{\rmd}{{\rm d}}
\newcommand{\rmi}{{\rm i}}
\newcommand{\halmos}{\quad\hfill\mbox{$\Box$}}
\newcommand{\LLL}{{\cal L}}
\newcommand{\YYY}{{\cal Y}}
\newcommand{\III}{{\cal I}}
\newcommand{\JJJ}{{\cal J}}
\newcommand{\RR}{\mathbb{R}}
\newcommand{\XX}{\mathbb{X}}
\newcommand{\wt}{\widetilde}
\newcommand{\wh}{\widehat}
\newcommand{\dto}{\downarrow}
\newcommand{\EEEE}{\mathfrak{E}}
\newcommand{\cadlag}{c\`adl\`ag}
\newcommand{\be}{\begin{equation}}
\newcommand{\ee}{\end{equation}}
\newcommand{\bea}{\begin{eqnarray}}
\newcommand{\eea}{\end{eqnarray}}
\newcommand{\bean}{\begin{eqnarray*}}
\newcommand{\eean}{\end{eqnarray*}}
\newcommand{\ben}{\begin{equation*}}
\newcommand{\een}{\end{equation*}}
\newcommand{\ba}{\begin{aligned}}
\newcommand{\ea}{\end{aligned}}
\def\nexto{\kern -0.54em}
\def\topr{\buildrel P \over \to }
\newcommand{\sgn}{\mbox{sign}}
\numberwithin{equation}{section}
\numberwithin{theorem}{section}
\numberwithin{corollary}{section}
\numberwithin{proposition}{section}
\numberwithin{lemma}{section}
\begin{document}

\title{\bf Convergence of Trimmed L\'evy Processes to Trimmed Stable Random Variables at $0$}
\author{Yuguang Fan \footnote{Email : \href{mailto:yuguang.fan@unimelb.edu.au}{yuguang.fan@unimelb.edu.au}} \\
School of Mathematics and Statistics,
The University of Melbourne,
Australia\\
 }

\maketitle

\begin{abstract}

Let $^{(r,s)}X_t$ be the L\'evy process $X_t$ with the $r$ largest jumps and $s$ smallest jumps up till time $t$ deleted and let $^{(r)}\wt X_t$ be $X_t$ with the $r$ largest jumps in modulus up till time $t$ deleted.
We show that $({}^{(r,s)}X_t - a_t)/b_t$  or $({}^{(r)}\wt X_t - a_t)/b_t$ converges to a proper nondegenerate nonnormal limit distribution as $t \dto 0$ if and only if $(X_t-a_t)/b_t $ converges  as $t \dto 0$ to an $\alpha$-stable random variable, with $  0 <\alpha<2 $, where $a_t$ and $b_t>0$ are non stochastic functions in $t$. Together with the asymptotic normality case treated in \cite{fan2014an}, this completes the domain of attraction problem for trimmed L\'evy processes at $0$.
 
\end{abstract}

\section{Introduction and Main Result}
Let $(X_t)_{t\ge 0}$ be a real valued L\'evy process with canonical triplet $(\gamma,\sigma^2,\Pi)$,
thus having characteristic function $Ee^{\rmi \theta X_t}= e^{t\Psi(\theta)}$, $t\ge 0$,      
$\theta \in \R$, with characteristic exponent
\ben\label{ce}
\Psi(\theta):= \rmi \theta \gamma - \frac 1 2 \sigma^2 \theta^2 +\int_{\R_*}
\left(e^{\rmi \theta x}-1-\rmi \theta x{\bf 1}_{\{|x|\le 1\}}\right)
\Pi(\rmd x),
\een
$\gamma\in\R$, $\sigma^2\ge 0$, $\Pi$ is a Borel measure on $\RR_* : = \RR\setminus \{0\}$, with $\int_{\R_*} (x^2 \wedge 1) \Pi(\rmd x) < \infty$.
The positive, negative and two-sided tails of $\Pi$ are
\ben\label{pidef}
\pibar^+(x):= \Pi\{(x,\infty)\},\ \pibar^-(x):= \Pi\{(-\infty,-x)\},\
{\rm and} \ \pibar(x):=\pibar^+(x)+\pibar^-(x), \ x>0.
\een
The restriction of $\Pi$ to $(0,\infty)$ is $\Pi^+$. Let $\Pi^- = \Pi(-\cdot)$ and $\Pi^{|\cdot|}  = \Pi^+ + \Pi^-$.


Denote the jump process of $X$ by $(\Delta X_t)_{t\ge 0}$, where $\Delta X_t = X_t-X_{t-}$, $t>0$, with  $\Delta X_0 \equiv 0$.
Denote the positive jumps by $\Delta X_t^+ = \Delta X_t \vee 0$ and the negative jumps by $\Delta X_t^- = (-\Delta X_t)\vee 0$.
Note that $(\Delta X_t^+)_{t\ge 0}$ and $(\Delta X_t^-)_{t \ge 0}$ are nonnegative independent processes. 
For any integer $r, s > 0$, let $\Delta X_t^{(r)}$ be the $r^{th}$ largest positive jump and $\Delta X_t^{(s),-}$ be the magnitude of the $s^{th}$ largest negative jump up till time $t$ respectively. We sometimes write $\Delta X_t^{(r),+}$ for $\Delta X_t^{(r)}$.
We write $\wt{\Delta X}_t^{(r)}$ to denote the $r^{th}$ largest jump in modulus up to time $t$. For a precise and formal definition of the ordered statistics, allowing tied values, we refer to Buchmann et al.  \cite{bfm14} (Section 2.1).
The trimmed versions of  $X$ are defined as
\begin{equation}\label{trims}
{}^{(r, s)} X_t:= X_t- \sum_{i=1}^r {\Delta X}_t^{(i)} + \sum_{j=1}^s \Delta X_t^{(j),-},
 \quad {\rm and} \quad
{}^{(r)}\wt X_t:= X_t- \sum_{i=1}^r \wt{\Delta X}_t^{(i)},
\end{equation} which are termed asymmetrical trimming and modulus trimming respectively.

For $s = 0$ and $r = 0$, one sided trimmed processes
\begin{equation}\label{1s}
 {}^{(r)} X_t:= X_t- \sum_{i=1}^r {\Delta X}_t^{(i)} ,
 \quad {\rm and} \quad
{}^{(s,-)} X_t:= X_t + \sum_{i=1}^s {\Delta X}_t^{(i),-},
\end{equation} are special cases of asymmetrical trimming.  These comprise all versions commonly referred to as ``light trimming'', i.e. trimming off a bounded number of jumps from the process. 
Set 
\[{}^{(0,0)}X_t = {}^{(0)} \wt X_t ={}^{(0)}  X_t = {}^{(0,-)}  X_t = X_t. 
\]

We are familiar with the idea of trimming in the random walks literature. Trimming seems to be a natural way to assess the effect of extreme values of a certain kind. In the context of a L\'evy process with infinite activity, i.e., when the L\'evy measure is an infinite measure with a singularity at $0$, trimming at small times has the interesting feature that we have an inexhaustible amount of jumps of minute sizes.  This gives a whole new perspective to trimming a L\'evy process at small times, as compared to trimming of random walks. As $t \to \infty$, an increasing number of jumps with bigger magnitudes come into consideration for removal, but as $t \dto 0$, jumps of bigger sizes progressively become ineligible for removal in the trimming procedure. This makes trimming at small times a non trivial effort with no exact large time analogy and promises a fresh perspective in seeking out potential applications. In the small time paradigm, we zoom in to focus on the hierarchy of the very small jumps. 
By such reasoning we could hardly expect a parallel structure between large time and small time results. 

Examples of potential applications are in high frequency finance \cite{aitJacod2009}, scattering of photons \cite{DavisMarshak1997} and particle physics \cite{zhengetal2013}. In \cite{DavisMarshak1997} and \cite{zhengetal2013}, in particular, and in many other applications areas, ``L\'evy flights'' (processes with heavy tailed increment distributions) are found to accurately describe many physical processes. 

In the special case when the trimmed processes, with appropriate centering and norming, converge to a normal or degenerate distribution, it is shown in \cite{fan2014an} that the original L\'evy process, after being centered and normed with the same functions, will converge to the same normal or degenerate law as $t \dto 0$.  This implies that light trimming, i.e. trimming off a finite number of jumps, has no effect on asymptotic normality or degeneracy.  The next natural question to ponder is whether such consistency holds for limiting stable laws. This motivates the investigation performed in this paper.


In Buchmann et al. \cite{bfm14}, representation formulae for the positively trimmed process and the modulus trimmed process with its corresponding ordered jumps are derived at each finite time $t> 0$.  Having neither independent increments nor time homogeneity, the trimmed process is no longer a L\'evy process. But the law of a trimmed process at any finite time $t > 0$ can be represented as a mixture of a truncated process plus a Poisson number of ties (depending on the atoms of the L\'evy measure of the untrimmed process) with a Gamma random variable. This representation is extended to asymmetrical trimming in Fan \cite{fan2014an} where $r$ positive jumps and $s$ negative jumps are removed from the process.

When the original process is in the domain of attraction of a stable law at $0$, the truncated processes appearing in the representations, both asymmetrical and modulus types, also converge to a nondegenerate infinitely divisible limit random variable with the same centering and norming functions. We can even write out explicitly the characteristic triplet of the limit distribution.  However when taking the Poisson number of ties into consideration, a finite limit can only be reached through a further subsequence $t_k \dto 0$ for certain ranges of truncation levels (see Lemma 2.1 in Fan \cite{fan2014an}).


The domain of attraction of a stable law for L\'evy processes at small times has been completely characterised, for example see Maller and Mason \cite{MM08}, \cite{MM09}, \cite{MM2010}, Doney and Maller \cite{dm02}, Doney \cite{doney04}. Various equivalent analytical conditions are derived in the above references. For $X$ to be in the domain of attraction of an $\alpha$ stable law with $0<\alpha<2$ at $0$,  loosely speaking, its L\'evy measure needs to have a regularly varying singularity at $0$ and the limits $\lim_{z \dto 0} \pibar^\pm(z)/\pibar(z)$ must exist.


This study is also a continuation of applying the rich ideas from the precedent discrete random walks literature to the continuous setting in L\'evy processes. But for looking at small time results, a degree of delicacy and meticulous care is needed to turn around the methods from $t \to \infty$ to $t \dto 0$.  Particular attention has to be paid to the treatment of possible tied values in the order statistics of the jumps. This paper hinges on many useful ideas from Kesten \cite{kesten93} where he deals with the same problem in the random walks large time setting.

To eliminate the compound Poisson case, whose small time behaviour is trivial, assume $\pibar(0+) = \infty$ throughout. The statement of the main theorem is as follows. Let $\N_0 : =\{0,1,2,3, \dots\}$ be the set of nonnegative integers.

\begin{theorem}\label{convStb}
Suppose $\pibar^\pm(0+) = \infty$. There exist a nonstochastic function $a_t$ and a nondecreasing function $b_t > 0$ such that, as $t \dto 0$, for any $r,s \in \N_0$,
\begin{equation}\label{convStb2}
S_t : = \frac{X_t-a_t}{b_t} \text{ converges in  distribution as }  t \dto 0   ,
\end{equation}
if and only if

\begin{equation}\label{convStb.asy}
{}^{(r,s)}S_t : = \frac{^{(r,s)} X_t-a_t}{b_t} \text{ converges in  distribution as }  t \dto 0 ,
\end{equation}
or equivalently,
\begin{equation}\label{convStb.mod}
{}^{(r)}\wt S_t : =  \frac{^{(r)}\wt X_t-a_t}{b_t} \text{ converges in  distribution as }  t \dto 0 .
\end{equation}
\end{theorem}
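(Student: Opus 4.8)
The plan is to prove the two directions of the equivalence separately, using as principal tools the path–decomposition / representation formula for the trimmed process at a fixed time $t$ (Buchmann et al. \cite{bfm14}, extended to asymmetrical trimming in Fan \cite{fan2014an}), which writes the law of ${}^{(r,s)}X_t$ as a mixture, over a Poisson number of jumps tied to atoms of $\Pi$ and randomized by an independent Gamma variable, of a truncated L\'evy-type variable; and the known analytic characterisation of the domain of attraction of an $\alpha$-stable law at $0$ (Maller and Mason \cite{MM08,MM09,MM2010}, Doney and Maller \cite{dm02}, Doney \cite{doney04}): $S_t$ converges to a proper nondegenerate nonnormal limit as $t\dto 0$ precisely when $\pibar$ is regularly varying at $0$ with index $-\alpha\in(-2,0)$, the tail–balance ratios $\pibar^\pm(x)/\pibar(x)$ converge as $x\dto 0$, the Gaussian part is negligible at scale $b_t$, and $a_t,b_t$ stabilise $t\pibar(b_t)$ and $a_t/b_t$. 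I shall also use the elementary fact that every nondegenerate limit in distribution of a normed L\'evy process as $t\dto 0$ is a stable law (from the stability functional equation obtained via $X_t\eqdr\sum_{i=1}^n X_{t/n}^{(i)}$ together with convergence of types). Since light trimming preserves asymptotic normality and degeneracy by \cite{fan2014an}, it suffices to treat the nonnormal, nondegenerate case, which is where the $\alpha$-stable ($0<\alpha<2$) statement lives.

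For \eqref{convStb2} $\Rightarrow$ \eqref{convStb.asy}, \eqref{convStb.mod}: suppose $S_t\todr\zeta$ with $\zeta$ $\alpha$-stable, $0<\alpha<2$, and write $X_t={}^{(r,s)}X_t+\sum_{i=1}^r\Delta X_t^{(i)}-\sum_{j=1}^s\Delta X_t^{(j),-}$. By regular variation of $\pibar$ the normed extreme jumps $(b_t^{-1}\Delta X_t^{(i)},\,b_t^{-1}\Delta X_t^{(j),-})_{i\le r,\,j\le s}$ converge jointly to the corresponding order statistics of two independent Poisson point processes, the ties contributing nothing in the limit because the atoms of $\Pi$ are asymptotically negligible relative to $\pibar$ on the scale $b_t$; the content of the step is to upgrade this to joint convergence with the normed truncated remainder $b_t^{-1}({}^{(r,s)}X_t-a_t)$, which after the $r+s$ deletions is, up to a vanishing correction, a centred sum of the surviving small jumps and converges to $\zeta$ minus its corresponding extreme Poisson points. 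Feeding this joint convergence through the continuous map given by the path decomposition identifies the limit of ${}^{(r,s)}S_t$ as that ``trimmed stable'' variable, and one checks from its L\'evy / extreme-point structure that it is proper, nondegenerate and nonnormal; the modulus case \eqref{convStb.mod} is the same argument with one Poisson process of jump moduli.

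For the converse, \eqref{convStb.asy} or \eqref{convStb.mod} $\Rightarrow$ \eqref{convStb2} --- the substantive direction --- suppose ${}^{(r,s)}S_t\todr Y$ with $Y$ proper, nondegenerate, nonnormal. First, a Poisson-mean computation on the ordered jumps shows tightness transfers: if $b_t^{-1}\Delta X_t^{(r+1)}$ (the largest surviving positive jump) were not tight then ${}^{(r,s)}S_t$ could not be tight, and once $t\pibar^+(Mb_t)=O(1)$ for each $M$ one gets $b_t^{-1}\Delta X_t^{(i)}$ tight for every $i\le r$, and likewise on the negative side. If $b_t^{-1}\Delta X_t^{(1)}\topr 0$ then $S_t-{}^{(r,s)}S_t\topr 0$, so $S_t\todr Y$, whence $Y$ is stable, which forces $Y$ normal, contradicting the hypothesis; so $b_t^{-1}\Delta X_t^{(1)}$ is tight but does not vanish. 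The essential step --- following Kesten \cite{kesten93} --- is to deduce from tightness and nondegeneracy of ${}^{(r,s)}S_t$, via the representation formula, that $\pibar$ is regularly varying at $0$ and the tail-balance ratios converge as $x\dto 0$: irregular or oscillating behaviour of $\pibar$ relative to $b_t$ would prevent the centred small-jump part inside the representation of ${}^{(r,s)}X_t$ from converging to a nondegenerate law, while $\pibar^\pm(0+)=\infty$ eliminates the degenerate alternatives. Granting this, extract along any $t_k\dto 0$ joint convergence of ${}^{(r,s)}S_{t_k}$ with all normed extreme jumps, so $S_{t_k}$ converges; a convergence-of-types argument forces all subsequential limits of $S_t$ to coincide, hence $S_t$ converges, necessarily to a stable law, and nonnormality of $Y$ together with \cite{fan2014an} rules out $\alpha=2$. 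Finally \eqref{convStb.asy} $\Leftrightarrow$ \eqref{convStb.mod} by running each against \eqref{convStb2}.

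I expect the main obstacle to be precisely this last essential step of the converse: converting tightness and nondegeneracy of the trimmed variable into regular variation of $\pibar$ and existence of the tail-balance limits. The Poisson-number-of-ties term in the representation yields a finite limit only along suitable subsequences of truncation levels (cf. Lemma 2.1 of \cite{fan2014an}), so real care is needed to ensure the regular-variation conclusion, and hence the convergence of $S_t$, holds along the whole net $t\dto 0$ rather than merely along a subsequence --- which is where the convergence-of-types bookkeeping carries the argument.
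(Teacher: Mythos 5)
Your architecture is right at the top level (easy direction via the representation formula and negligibility of ties; hard direction reduced to proving regular variation of $\pibar$ at $0$ plus existence of the tail-balance limits, then invoking Maller--Mason), and your forward-direction sketch matches the paper's Lemma \ref{easy_dir} in substance. But the converse direction contains a genuine gap exactly where you flag it, and the heuristic you offer to fill it is not a proof and is not the mechanism that actually works. You write that ``irregular or oscillating behaviour of $\pibar$ relative to $b_t$ would prevent the centred small-jump part inside the representation of ${}^{(r,s)}X_t$ from converging to a nondegenerate law,'' and then say ``Granting this\dots''. This cannot be granted: the representation formula only controls the truncated part at truncation levels $\pibarpminv(v/t)$ along subsequences (your own citation of Lemma 2.1 of \cite{fan2014an} makes this point), and nondegeneracy of a subsequential limit of the truncated part does not rule out oscillation of $t\pibar(xb_t)$ along the full net $t\dto 0$. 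The convergence-of-types bookkeeping you invoke afterwards presupposes that all subsequential limits of $S_t$ coincide, which is precisely what regular variation and tail balance are needed to establish; it cannot be used to derive them.

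The paper's actual route is quite different and is the real content of the theorem: one sandwiches the tail of the trimmed variable by the tail of its $(r+1)$-st order statistic --- a lower bound $P(\Delta X_t^{(r+1)}\ge(x+y)b_t)\le(1+\veps)P({}^{(r,s)}S_t\ge x)$ (Lemma \ref{lemma6}), and an upper bound obtained by showing via a Chernoff/exponential-moment estimate on the truncated process that $P({}^{(r,s)}S_t>\eta x,\ \Delta X_t^{(r+1)}\le\delta xb_t)$ is $O\bigl(\veps(t\pibar(xb_t))^{r+1}\bigr)$ for a carefully tuned $\delta(\eta,t,x)$ (Lemmas \ref{lemma7}--\ref{lemma9}). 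Combined with $P(\Delta X_t^{(r+1)}>xb_t)\asymp(t\pibar^+(xb_t))^{r+1}/(r+1)!$ this squeezes $t\pibar^\pm(x(1\pm\eta)b_t)$ between functionals of the limit law $G$ (Lemma \ref{lemma10}); substituting $z=xb_{t(z,x)}$ (which needs $b(1/n)\sim b(1/(n+1))$, itself a separate lemma) and applying Helly selection plus Feller's rigidity lemma yields $\pibar(z)/\pibar(zy)\to y^\alpha$. Your proposal contains none of this quantitative tail comparison. Two further omissions: (i) the modulus case requires an extra argument for the tail-balance limits, since $\wt{\Delta X}_t^{(r+1)}$ entangles $\pibar$ with $\pibar^\pm$ --- the paper resolves this with a Fubini/integration-by-parts identity for $\int_{y>z}\pibar(y)^r\Pi^-(\rmd y)$ (Lemma \ref{lemma13}), and your ``same argument with one Poisson process'' does not address it; (ii) the entire converse argument is first carried out under the assumption that $\Pi$ is diffuse, and removing that assumption requires the quadratic-variation perturbation $X_t^*=X_t+\sum_{s\le t}\sgn(\Delta X_s)\Delta U_s(\Delta X_s)^2$ of Section \ref{rmcty}, which your proposal does not mention.
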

When $r = 0$ or $s =0 $ in \eqref{convStb.asy}, referring to \eqref{1s}, we define the one-sided trimmed process with centering $a_t$ and norming $b_t$ by 
\ben\label{def.one}
 {}^{(s,-)}S_t : = \frac{{}^{(s,-)}X_t -a_t}{b_t} \quad \text{and} \quad {}^{(r)}S_t : = \frac{{}^{(r)}X_t -a_t}{b_t}.
\een

Throughout the paper, \eqref{convStb.asy} and \eqref{convStb.mod} will be written as $Law({}^{(r,s)}S_t) \to G$ and $Law(^{(r)}\wt S_t) \to \wt G$ as $t\dto 0$.

When \eqref{convStb2} holds, the limit distribution could be a degenerate distribution, a normal distribution or a stable law with index in $(0,2)$. In the case of asymptotic degeneracy or normality, by \cite{fan2014an}, the limit distribution of the trimmed process in \eqref{convStb.asy} or \eqref{convStb.mod} is the same. When $X_t$ is in the domain of attraction at $0$ of a stable law with index in $(0,2)$, we can derive that the trimmed process with the same centering and norming will converge to a corresponding ``trimmed stable law'' (see Lemma \ref{easy_dir}). This is the ``easy" direction of Theorem \ref{convStb}. 
In general the limit distribution in \eqref{convStb2} is different from that of \eqref{convStb.asy} or \eqref{convStb.mod} unless the limit is normal or degenerate. 

The converse directions in Theorem \ref{convStb} for non-normal convergence present a much harder problem. Attention was drawn to this problem in the random walk setting by Maller \cite{mal82} and Mori \cite{mori84}, ultimately to be resolved in Kesten \cite{kesten93}. Our main objective in this paper is to address this problem in the L\'evy setting. Because there is no ``small time'' concept for random walks, some quite different methods have to be developed, which may be of use in other applications.
Some particular instances of this are flagged where they occur in the proofs, and see Section \ref{rmcty} where the quadratic variation plays a key role in removing an assumption of continuity.

\bigskip\noindent
\begin{remark} (Domain of attraction of a trimmed stable process)\\
\rm We say that a stochastic process $X_t$ is attracted to a random variable $Y$ at $0$ if there exist nonstochastic functions $a_t \in \R$ and $b_t > 0$ with $b_t \dto 0$ as $t \dto 0$ such that $(X_t - a_t)/b_t \todr Y$. Define an $r,s$-trimmed stable process $({}^{(r,s)}Z_t)_{t\ge 0}$ analogously to \eqref{trims} and \eqref{1s} where $(Z_t)$ is an $\alpha$-stable L\'evy process with index $\alpha \in (0,2)$. We can deduce that there exist nonstochastic functions $a_t$, $b_t$ such that $({}^{(r,s)}X_t -a_t)/b_t \to Y$ if and only if $Y \eqdr {}^{(r,s)}Z_1$. To see this, note that if the trimmed $X$ process converges to a non-normal non-degenerate distribution, then by Theorem \ref{convStb}, the original process also converges, with the same centering and norming functions, to a non-normal non-degenerate random variable $\wh Y$. Then \cite{MM08} (see Theorem 2.3) shows that $\wh Y$ is necessarily a stable law. Then by Lemma \ref{easy_dir} (below), we have the limit random variable $Y \eqdr {}^{(r,s)}Z_1$. This proves the necessity. Conversely, for each ${}^{(r,s)}Z_1$, there exist a L\'evy process $X_t$ and nonstochastic functions $a_t \in \R$ and $b_t > 0$ such that $(X_t -a_t)/b_t \todr Z_1$. Hence by Lemma \ref{easy_dir} again, ${}^{(r,s)}X_t$ is attracted to  ${}^{(r,s)}Z_1$. We call ${}^{(r,s)}Z_1$ an $r,s$-trimmed stable random variable and its distribution an $r,s$-trimmed stable law. So we have shown that for each $r,s$-trimmed stable law, there exists a L\'evy process $X_t$ such that the $r,s$-trimmed L\'evy process ${}^{(r,s)}X_t$ is in its domain of attraction. 
So any $r,s$-trimmed stable distribution has a nonempty domain of attraction. And all possible non-degenerate non-normal limits of normed, centered, $r,s$-trimmed L\'evy processes are $r,s$-trimmed stable distributions. A similar characterisation holds for the modulus trimmed domains of attraction.

\end{remark}

\section{Preliminary Results}

Through this section, we assume \eqref{convStb.asy} or \eqref{convStb.mod}.
We can first eliminate the case when the limit distribution in \eqref{convStb.asy} or \eqref{convStb.mod} is a normal or degenerate law as this case has been thoroughly dealt with in \cite{fan2014an}. It has been proved in \cite{fan2014an} that if \eqref{convStb.asy} or \eqref{convStb.mod} holds with limit distribution being normal or degenerate, $S_t$ also converges to the same law. This is derived in \cite{fan2014an} by first showing that the tightness of ${}^{(r,s)}S_t$ or ${}^{(r)}\wt S_t$ implies the tightness of $S_t$ (see Theorem 1.1 in \cite{fan2014an}). By eliminating the degenerate distribution, we have that \eqref{convStb.asy} or \eqref{convStb.mod} implies that the untrimmed process $X_t$ is in the Feller class (refer to Maller and Mason \cite{MM2010} for more details on properties of Feller class) at $0$, i.e. $S_t$ is stochastically compact as $t \dto 0$. This is also shown in \cite{fan2014an}. By relating to analytical equivalences for the Feller class in terms of the tail of the L\'evy measure and the truncated moments, we can derive bounds for important analytic quantities in the present situation. These quantities are then used to estimate the magnitudes of both the positive and negative tail probabilities of the trimmed process for sufficiently small $t$.

The tail of the marginal distribution of the trimmed process is very hard to compute with the precision needed to prove Theorem \ref{convStb}. Even with the knowledge of the representation formula in \cite{bfm14}, it seems extremely difficult to express the tail probabilities in terms of useful quantities, for example, in terms of the tail of the corresponding L\'evy measure. Hence the idea we pursue is to bound the trimmed process above and below by the distribution of its next largest jump. The distributions of these ordered jumps can be computed directly, for example in Fan \cite{fan2014an}, in terms of the tail of the L\'evy measure and also estimated asymptotically.

Our aim, then, is to show that \eqref{convStb.asy} or \eqref{convStb.mod} implies that $\pibar$ is regularly varying with an index $\alpha \in (0,2)$ at $0$ and also that the limits $\pibar^\pm(z)/\pibar(z)$ exist as $z \to 0$.  It seems to be particularly difficult to prove the latter fact from $\eqref{convStb.mod}$ as the order statistics of the modulus jumps have an  expression entangling both $\pibar$ and $\pibar^\pm$. Once having done this, however, Theorem 2.3 in Maller and Mason \cite{MM08} can be used to show that the untrimmed process $X_t$ is in the domain of attraction of a stable law at $0$.


\subsection{Inequalities for the normed ordered jumps}\label{sect:pre}

Recall that in Fan \cite{fan2014an} Theorem 1.1, it is proved that the tightness of the trimmed process 
${}^{(r,s)}S_t$ for given $a_t$ and $b_t > 0$ implies $\Delta X_t^{(k),\pm}/b_t$ are tight at $0$ for all $ k \in \N$. Note that this implies $b_t \to 0$ as $t \dto 0$.
Therefore, by adding a finite number of tight families, we can easily derive that $S_t$ is tight at $0$. We can write  $\Delta X_t^{(r, \pm)} \eqdr \pibarpminv(\Gamma_r/t)$ for each $r \in \N$, where $\Gamma_r$ is distributed as $Gamma(r,1)$ (see \cite{bfm14} or \cite{fan2014an}) and $\pibarpminv$ denotes the inverse functions.
When $f:(0,\infty)\mapsto [0,\infty)$ is a nonincreasing function, its right-continuous inverse is
\ben
f^\leftarrow(x)=\inf\{y>0: \ f(y) \le x \},\ x>0.
\een
Then for each fixed $v, u > 0$, there exist constants $C_v$ and $C_u$ such that for all sufficiently small $t$, we have
\begin{equation}\label{bbd0b}
\pibarpinv\left(v/t\right)\le b_t C_v, \quad {\rm and} \quad \pibarminv\left(u/t\right)\le b_t C_u.
\end{equation}
To see this, suppose on the contrary that there exist sequences $\{t_k\} \dto 0$ and $\{M_k\} \to \infty$ such that $\pibarpinv(v/t_k)/b_{t_k} > M_k$ for all $k \in \N$. Then for each $k \in \N$, we have
\ben\label{bdd1b}
1-e^{-v} \le  P(\Delta X_{t_k}^{(1)} > \pibarpinv(v/{t_k})) = P\left(\frac{\Delta X_{t_k}^{(1)}}{b_{t_k}} > \frac{\pibarpinv(v/{t_k})}{b_{t_k}} > M_k\right).
\een
Since $\Delta X_t^{(1)}/b_t$ is tight at $0$, the RHS tends to $0$ as $k \to \infty$.
As $v > 0$ is arbitrary, this gives a contradiction which proves the first inequality in \eqref{bbd0b}. The second inequality is proved similarly.
By the same argument, under the assumption that ${}^{(r)}\wt S_t$ is tight, the normed modulus ordered jumps are tight, i.e. 
$\wt{\Delta X}_t^{(k)}/b_t$ is tight for all $k \in \N$ as $t \dto 0$. 
By the same argument, then, for each $v>0$, there exists a $C_v$ such that, for all sufficiently small $t$,
\begin{equation}\label{bbd0}
\pibarinv\left(v/t\right)\le b_t C_v.
\end{equation}
An equivalent analytical condition derived in Fan \cite{fan2014an} for the tightness of all normed ordered jumps $\Delta X_t^{(r),\pm}/b_t$, $r \in \N$, is 
\be\label{bbd1}
 \lim_{x \to \infty} \limsup_{t \dto 0} t\pibar^\pm(xb_t) = 0.
\ee
Then for each $\veps \in (0,1) $, there exists $x_1(\veps) $ large such that 
\be\label{remark1} 
\limsup_{t\dto 0} t\pibar^\pm(x b_t) \le \veps , \quad x > x_1(\veps),
\ee
and there exists $t_1(\veps, x_1)$ small such that for all $x > x_1$ and $0<t < t_1$,
$t\pibar^\pm(xb_t) \le \veps$.
A similar expression is true with $\pibar^\pm$ replaced by $\pibar$. 
Recall from Fan \cite{fan2014an} Lemma 3.1 that the distribution of the $(r+1)^{st}$ largest jump satisfies
\be\label{rJ1}
P(\Delta X_t^{(r+1)}  > y ) = \int_{0}^{t\pibar^+(y)}P(\Gamma_{r+1} \in \rmd v), \quad  y > 0 .
\ee
Hence, we have as lower and upper bounds for the distribution of the ordered jumps
\ben\label{rJ2}
e^{-t\pibar^+(y)} \frac{(t\pibar^+(y))^{r+1}}{(r+1)!} \le P({\Delta X}_t^{(r+1)}  > y ) \le \frac{(t\pibar^+(y))^{r+1}}{(r+1)!} .
\een
Replace $y $ by $xb_t$. By \eqref{remark1}, we can choose $x_1(\veps)$ such that  $t\pibar^+(x b_t) \le \veps \le -\log (1-\veps)$ for $x > x_1(\veps)$ and $t <t_1$. Then,
\begin{equation*}\label{2max_b}
1\ge \frac{P(\Delta X_t^{(r+1)} > x b_t)}{(t\pibar^+(x b_t))^{r+1}/(r+1)!}\ge  e^{-t\pibar^+(xb_t)} \ge 1-\veps, \quad x \ge x_1(\veps), \quad t <t_1.
\end{equation*}
Therefore if \eqref{convStb.asy} or \eqref{convStb.mod} holds in Theorem \ref{convStb}, for any $\veps >0$, we have for each $x >x_1(\veps)$ and all $0<t < t_1$,
\begin{equation}\label{2max0}
\frac{1-\veps}{(r+1)!}\left(t\pibar^+(xb_t)\right)^{r+1}\le P\left(\Delta X_t^{(r+1)} >  x b_t\right)\le \frac{\left(t\pibar^+(xb_t)\right)^{r+1}}{(r+1)!};
\end{equation} and similarly,
\begin{equation*}\label{2max0-}
\frac{1-\veps}{(s+1)!}\left(t\pibar^-(xb_t)\right)^{s+1}\le P\left(\Delta X_t^{(s+1),-} >  x b_t\right)\le \frac{\left(t\pibar^-(xb_t)\right)^{s+1}}{(s+1)!}.
\end{equation*}

\subsection{Eliminate Normal and Degenerate Limits}

We have assumed \eqref{convStb.asy} or \eqref{convStb.mod}, so, as discussed in Section \ref{sect:pre}, $S_t$ is tight.
Recall that \eqref{convStb2} holds if and only if the limit distribution is an $\alpha$-stable $( 0< \alpha < 2)$ or a normal (or degenerate) distribution. It has been proved that Theorem \ref{convStb} holds if the limit random variable is a normal or a degenerate distribution (see Fan \cite{fan2014an}, Theorem 1.2).  
We now want to eliminate the case when $X_t$ is in the domain of partial attraction of a normal law. Suppose this is the case. Then $S_t$ converges to a normal random variable, without loss of generality say $N(0,1)$, through a subsequence. Then the trimmed process ${}^{(r,s)}S_t$ or ${}^{(r)}\wt S_t$ also converges to $N(0,1)$ through the same subsequence, hence by assumption \eqref{convStb.asy} or \eqref{convStb.mod}, we have that ${}^{(r,s)}S_t$ or ${}^{(r)}\wt S_t$ converges to $N(0,1)$ through the whole sequence since we assume that these do have a limit as $t \dto 0$. This reduces to the case that has been studied in \cite{fan2014an}, which we can exclude. Therefore we can assume that $X_t$ is not in the partial domain of attraction of a normal law.

For each $x > 0$, denote the truncated mean and second moment functions by 
\begin{equation}\label{vdef}
 \nu(x) = \gamma - \int_{x <  |y|\le 1} y \Pi(\rmd y), \quad \text{and } \quad V(x) = \sigma^2 + \int_{|y|\le x} y^2 \Pi(\rmd y).
\end{equation}
Now $X_t$ is in the domain of partial attraction of a normal law if and only if
\[ \liminf_{z \dto 0} \frac{z^2 \pibar(z)}{V(z)}  = 0. 
\]
See \cite{fan_thesis} for a proof.
Therefore by eliminating this case we have that
\[ \liminf_{z \dto 0} \frac{z^2 \pibar(z)}{V(z)}  > 0 \quad \text{and} \quad \sigma^2 = 0.
\]

%

In the same way we can also eliminate the case when $S_t$ converges to a degenerate limit through a subsequence. So we can conclude that $X_t$ is in the Feller class at $0$, which is equivalent to (see Theorem 2.1 Maller and Mason \cite{MM2010}),
\[ \limsup_{z\dto 0}\frac{z^2\pibar(z)}{V(z)} < \infty.
\]
From here onwards, in addition to \eqref{convStb.asy} or \eqref{convStb.mod}, we will assume that 
$\sigma^2 = 0$ and there exist constants $0<C_1$, $C_2 < \infty$ such that, for all small $z > 0$, (without loss of generality, say $z \le 1$), we have 
\begin{equation}\label{ass2}
C_1 < \frac{z^2 \pibar(z)}{V(z)} < C_2 , \qquad 0 < z\le 1.
\end{equation}	
Note that $U(z) = V(z) + z^2\pibar(z)$. \eqref{ass2} also implies that 
\begin{equation}\label{ass2b}
0 < \frac{C_1}{1+C_1} < \frac{z^2\pibar(z)}{U(z)} < \frac{C_2}{1+C_2} <\infty \quad \text{for}\quad 0<z\le 1.
\end{equation}

\subsection{Inequalities for the Tail functions and Norming functions}

From \eqref{ass2}, we can derive the following.
\begin{lemma}\label{A3}
Assume \eqref{convStb.asy} or \eqref{convStb.mod} holds, so that $S_{t}$ is tight and \eqref{ass2} holds. Then, for all $0<x<\infty$,
        \begin{equation}\label{ass3}
        0<\liminf_{t \dto 0} t \pibar(x b_t)\le \limsup_{t\dto 0} t\pibar(x b_t)<\infty. 
        \end{equation}
\end{lemma}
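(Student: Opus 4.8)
The plan is to reduce the two-sided estimate to the single value $x=1$ and then prove the upper and lower halves separately.

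\emph{Reduction.} From \eqref{ass2} I would first record the comparison inequality: for $0<y\le z\le 1$,
\[
\pibar(z)\ \le\ \pibar(y)\ \le\ \frac{C_2}{C_1}\Bigl(\frac{z}{y}\Bigr)^{2}\pibar(z),
\]
where the left inequality is monotonicity of $\pibar$ and the right one comes from $\pibar(y)<C_2V(y)/y^2\le C_2V(z)/y^2<(C_2/C_1)(z/y)^2\pibar(z)$, using that $V$ is nondecreasing and that \eqref{ass2} gives both $\pibar(y)<C_2V(y)/y^2$ and $V(z)<z^2\pibar(z)/C_1$. Since $b_t\dto0$, for small $t$ I may take $y,z$ to be any two of $b_t,xb_t,x_0b_t$ (with the larger one $\le1$), and this shows that $t\pibar(xb_t)$ and $t\pibar(b_t)$ are comparable up to multiplicative constants depending only on $x,C_1,C_2$. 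Hence \eqref{ass3} for all $x\in(0,\infty)$ is equivalent to the single assertion $0<\liminf_{t\dto0}t\pibar(b_t)$ and $\limsup_{t\dto0}t\pibar(b_t)<\infty$.

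\emph{Upper bound.} By the $\pibar$-version of \eqref{bbd1} (noted right after that display), $\lim_{x\to\infty}\limsup_{t\dto0}t\pibar(xb_t)=0$, so I fix $x_0\ge1$ with $\limsup_{t\dto0}t\pibar(x_0b_t)\le1$; the comparison inequality with $y=b_t$, $z=x_0b_t$ then gives $t\pibar(b_t)\le(C_2/C_1)x_0^2\,t\pibar(x_0b_t)$ for small $t$, so $\limsup_{t\dto0}t\pibar(b_t)\le(C_2/C_1)x_0^2<\infty$.

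\emph{Lower bound.} This is the core, and I argue by contradiction. Suppose $\liminf_{t\dto0}t\pibar(b_t)=0$ and choose $t_k\dto0$ with $t_k\pibar(b_{t_k})\to0$. The key point is that $t_k\pibar(b_{t_k})$ dominates the real part of the scaled characteristic exponent: since $\sigma^2=0$ we have $\mathrm{Re}\,\Psi(\theta)=-\int_{\R_*}(1-\cos\theta x)\,\Pi(\rmd x)$, and splitting the integral at $|x|=b_{t_k}$, bounding $1-\cos u\le u^2/2$ on the inner part and $1-\cos u\le2$ on the outer part, and invoking $V(b_{t_k})\le b_{t_k}^2\pibar(b_{t_k})/C_1$ from \eqref{ass2}, I obtain, for each fixed $\theta$,
\[
t_k\bigl|\mathrm{Re}\,\Psi(\theta/b_{t_k})\bigr|\ \le\ t_k\Bigl(\tfrac{\theta^{2}}{2b_{t_k}^{2}}V(b_{t_k})+2\pibar(b_{t_k})\Bigr)\ \le\ \Bigl(\tfrac{\theta^{2}}{2C_1}+2\Bigr)t_k\pibar(b_{t_k})\ \longrightarrow\ 0 .
\]
Consequently $|Ee^{\rmi\theta S_{t_k}}|=e^{t_k\mathrm{Re}\,\Psi(\theta/b_{t_k})}\to1$ for every $\theta\in\R$. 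Since $(S_{t_k})$ is tight, along a further subsequence $S_{t_k}\todr\mu$, and then $|\widehat{\mu}(\theta)|\equiv1$, which forces $\mu$ to be a point mass. This contradicts the fact, established above, that $X_t$ is stochastically compact at $0$ with no degenerate subsequential limit of $S_t$. Hence $\liminf_{t\dto0}t\pibar(b_t)>0$, and combining with the reduction and the upper bound gives \eqref{ass3}.

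\emph{Main obstacle.} The delicate step is the lower bound: the only leverage against $t\pibar(b_t)$ dipping to $0$ is non-degeneracy of the subsequential limits of $S_t$, and the characteristic-function estimate above is what turns that qualitative fact into the quantitative bound; it is essential there that \eqref{ass2} lets the small-jump variance term $t_kV(b_{t_k})/b_{t_k}^2$ be absorbed into $t_k\pibar(b_{t_k})$. An equivalent route, should it write up more cleanly, is to decompose $X_{t_k}$ (L\'evy--It\^o, truncation level $b_{t_k}$) and check that the compensated small-jump part and the large-jump part each tend to $0$ in probability after division by $b_{t_k}$, so that $S_{t_k}$ reduces to a bounded deterministic sequence and hence has a degenerate subsequential limit.
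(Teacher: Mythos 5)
Your proof is correct, and its skeleton matches the paper's: reduce to a single scale via the two-sided comparison $\pibar(y)\le (C_2/C_1)(z/y)^2\pibar(z)$ coming from \eqref{ass2} and monotonicity of $V$; get the upper bound from the tightness consequence \eqref{bbd1}; and get the lower bound by contradiction with the previously established exclusion of degenerate (and normal) subsequential limits of $S_t$. The one place you genuinely diverge is the mechanism for showing that a subsequence with $t_k\pibar(b_{t_k})\to 0$ forces a degenerate limit. The paper invokes Kallenberg's convergence criterion (Theorem 14.15 of \cite{kal02}) to conclude that the subsequential limit has zero L\'evy measure and hence is normal or degenerate; you instead estimate $|Ee^{\rmi\theta S_{t_k}}| = e^{t_k\mathrm{Re}\,\Psi(\theta/b_{t_k})}$ directly, splitting $\mathrm{Re}\,\Psi$ at the truncation level $b_{t_k}$ and absorbing the small-jump variance via $V(b_{t_k})\le b_{t_k}^2\pibar(b_{t_k})/C_1$, so that the characteristic function tends to modulus $1$ pointwise and the limit is a point mass. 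Your route is more elementary and self-contained (no appeal to the triangular-array convergence theorem), at the cost of a few lines of explicit computation; the paper's is shorter given that Kallenberg's criterion is already quoted and reused elsewhere. Note also that your argument only needs degeneracy at the single scale $b_{t_k}$, whereas the paper first propagates $t_k\pibar(xb_{t_k})\to 0$ to all $x>0$ before applying the criterion — a small economy in your favour. Both versions correctly rely on $\sigma^2=0$, which the paper has fixed before stating \eqref{ass2}.
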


\begin{proof}[Proof of Lemma \ref{A3}:] \
Since $S_t$ is tight as $t\dto 0$, by \eqref{bbd1}, we can find an $a_0>0$ such that for all $x\ge a_0$
\begin{equation*}\label{a3pf_a}
\limsup_{t \dto 0} t\pibar(xb_t) < \infty.
\end{equation*} Note that $V(\cdot)$ is a non-decreasing function and $\pibar(\cdot)$ is a non-increasing function. Since $b_t \to 0$, we can choose $t_2=t_2(a_0)$ such that $a_0 b_t\le 1$ for $0<t\le t_2$. Then for any $0<x \le a_0$ and $0<t\le t_2$, we have $xb_t\le 1$,  so by \eqref{ass2},
\begin{equation*}\label{a3pf_b}
C_1 < \frac{x^2b_t^2 t\pibar(xb_t)}{tV(xb_t)} < C_2.
\end{equation*}  Hence for each fixed $x \le a_0$ and $t \le t_2$, 
\begin{align*}\label{a3pf_b2}
t\pibar(xb_t) \le C_2 \frac{tV(xb_t)}{x^2 b_t^2} \le C_2 \frac{tV(a_0 b_t)}{a_0^2 b_t^2}\frac{a_0^2}{x^2} \le \frac{C_2 a_0^2}{C_1 x^2} t\pibar(a_0 b_t) < \infty.
\end{align*}
This proves the right hand inequality in \eqref{ass3} .  
Suppose we have for some $a_0>0$,  
\[ \liminf_{t\dto 0} t\pibar(a_0b_t) = 0.
\] 
Then $\liminf_{t\dto 0} t\pibar(xb_t) = 0$ for all $x > a_0$. For each $x \le a_0$, by \eqref{ass2},
\begin{equation*}\label{a3pf_c}
t\pibar(xb_t) \le \frac{C_2 tV(xb_t)}{x^2 b^2_t}\le \frac{C_2 tV(a_0b_t)}{x^2b^2(t)} \le \left(\frac{C_2 a_0^2}{C_1 x^2}\right) t\pibar(a_0b_t). 
\end{equation*}
This implies that for all $x>0$, $\lim_{ k\to \infty} t_k\pibar(x b(t_k)) = 0$ for some sequence $\{t_k\} \dto 0$.
Since $S_t$ is tight, $S_{t_k}$ converges to a finite random variable $Y$ along a subsequence of $\{t_k\}$, still denote it as $\{t_k\}$. 
Kallenberg's convergence criterion (Theorem 14.15 in \cite{kal02}) states that, $(X_{t_k} - a_{t_k})/b_{t_k} \todr Y$  as $k \to \infty$, where $Y$ is an infinitely divisible random variable with canonical triplet $(\beta, \tau^2, \Lambda)$, if and only if for each continuity point $x > 0$ of $\lambar^\pm$,
\ben
t_k \pibar^\pm (xb_t) \to \lambar^\pm(x)\quad \text{and} \quad
\frac{t_k V(xb_{t_k})}{x^2b^2_{t_k}} \to \tau^2 + \int_{|y|\le x} y^2 \Lambda(\rmd y).
\een
But then the subsequential limit $Y$ has L\'evy measure $0$. Thus $S_{t_k}$ converges to a normal or degenerate distribution, which possibility we have excluded. So
\begin{equation*}
\liminf_{t\dto 0} t\pibar(xb_t) > 0 \quad \text{for all } x>0.
\end{equation*}
This proves the left hand inequality in \eqref{ass3}, completing Lemma \ref{A3}.

\end{proof}

Take $x =1$ in \eqref{ass3}. Then there exist constants $0<C_3, C_4<\infty $, $t_2(1)$ such that 
\begin{align}\label{bdd(b)}
0<C_3 <t\pibar(b_t) <C_4 <\infty \quad \text{for } 0<t\le t_2(1).
\end{align}


The next lemma gives us more bounds from \eqref{ass2}  and Lemma \ref{A3}.
\begin{lemma}\label{bdds}
Assume \eqref{convStb.asy} or \eqref{convStb.mod} holds, so that \eqref{ass2} and \eqref{ass3} hold. Then
\begin{enumerate}[\rm(a)]
\item There exist constants $0 < C_5, D < \infty$ such that for any $\lambda \ge 1$, $0 < z \le 1$ with $\lambda z \le 1$, we have
\begin{equation}\label{lem5}
\frac{\pibar(z)}{\pibar(\lambda z)} \le C_5 \lambda^D. 
\end{equation}
\item For each $ \lambda \ge 1  $ and $ 0 < z\le 1$ such that $\lambda z \le 1$, we have 
\begin{equation}\label{bdd(c)}
\frac{V(\lambda z)}{V(z)}\le (1+C_2)\lambda^\rho, \qquad \text{where } \rho = 2\frac{C_2}{1+C_2}<2.
\end{equation} 
			
\item There exist constants $0<C_6, C_7 <\infty$ with $0<\rho < 2$ defined as in \eqref{bdd(c)} such that for $x\ge 1$ and $t<t_2(x)$, 
\begin{equation}\label{ass9}
C_6 x^{-D} \le t \pibar(xb_t) 
			\le t\pibar(b_t) \frac{C_2(1+C_2)}{C_1} x^{\rho -2} 
			\le C_7 x^{\rho -2}.
\end{equation}

\end{enumerate}
\end{lemma}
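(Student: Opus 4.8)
All three estimates flow from \eqref{ass2}, \eqref{bdd(b)} and the monotonicity of $V$ and $\pibar$; the only part with genuine content is (b), from which (a) and (c) follow by routine manipulation. For (a), the plan is simply to chain \eqref{ass2} with $V$ nondecreasing: since $z\le\lambda z\le 1$,
\[
\pibar(z)\le\frac{C_2V(z)}{z^2}\le\frac{C_2V(\lambda z)}{z^2}=C_2\lambda^2\,\frac{V(\lambda z)}{(\lambda z)^2}\le\frac{C_2}{C_1}\lambda^2\,\pibar(\lambda z),
\]
the last step being \eqref{ass2} at the point $\lambda z\le 1$. So \eqref{lem5} holds with $C_5=C_2/C_1$ and $D=2$, which is also the exponent we use in \eqref{ass9}.

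For (b) the key device is the function $U(z):=V(z)+z^2\pibar(z)$ of \eqref{ass2b}. I would first show, by Riemann--Stieltjes integration by parts against the tail $\pibar$, that $U(z_2)-U(z_1)=2\int_{z_1}^{z_2}u\,\pibar(u)\,\rmd u$ for $0<z_1<z_2$; the point is that the atoms of $\Pi$ contribute equal and opposite increments to $\rmd V$ and to $\rmd(z^2\pibar(z))$ and cancel, so $U$ is absolutely continuous with $U'(z)=2z\pibar(z)$ a.e. As \eqref{ass2} forces $U(z)>0$ for $0<z\le 1$, $\log U$ is well defined there and \eqref{ass2b} gives
\[
\frac{\rmd}{\rmd z}\log U(z)=\frac{2z\pibar(z)}{U(z)}=\frac{2}{z}\cdot\frac{z^2\pibar(z)}{U(z)}<\frac{2}{z}\cdot\frac{C_2}{1+C_2}=\frac{\rho}{z}.
\]
Integrating from $z$ to $\lambda z$ (both in $(0,1]$) yields $U(\lambda z)\le\lambda^\rho U(z)$, and hence
\[
V(\lambda z)\le U(\lambda z)\le\lambda^\rho U(z)=\lambda^\rho\bigl(V(z)+z^2\pibar(z)\bigr)\le(1+C_2)\lambda^\rho V(z)
\]
by the upper bound in \eqref{ass2}; since $0<C_2<\infty$, $\rho=2C_2/(1+C_2)\in(0,2)$.

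For (c), apply (a) and (b) with $z=b_t$ and $\lambda=x$; both are legitimate once $xb_t\le 1$, which is exactly what $t<t_2(x)$ records (possibly shrinking $t_2(x)$ so that also $t\le t_2(1)$). The lower bound: (a) gives $\pibar(xb_t)\ge\pibar(b_t)/(C_5x^D)$, so $t\pibar(xb_t)\ge t\pibar(b_t)/(C_5x^D)\ge C_3x^{-D}/C_5=:C_6x^{-D}$ by \eqref{bdd(b)}. The upper bound: using \eqref{ass2} at $xb_t$, then (b), then \eqref{ass2} at $b_t$,
\[
\pibar(xb_t)\le\frac{C_2V(xb_t)}{x^2b_t^2}\le\frac{C_2(1+C_2)x^\rho V(b_t)}{x^2b_t^2}\le\frac{C_2(1+C_2)}{C_1}x^{\rho-2}\pibar(b_t),
\]
so $t\pibar(xb_t)\le\frac{C_2(1+C_2)}{C_1}x^{\rho-2}\,t\pibar(b_t)\le C_7x^{\rho-2}$ with $C_7:=C_2(1+C_2)C_4/C_1$, using \eqref{bdd(b)} again; the middle member of \eqref{ass9} is exactly $t\pibar(b_t)\frac{C_2(1+C_2)}{C_1}x^{\rho-2}$.

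The one step deserving care is the absolute-continuity identity for $U$ when $\Pi$ has atoms, together with checking that the differential inequality, which holds at points of differentiability, suffices after integration (it does, as $U$ and hence $\log U$ are absolutely continuous on compact subsets of $(0,1]$). Everything else is bookkeeping with \eqref{ass2} and \eqref{bdd(b)}.
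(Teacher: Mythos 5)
Your proofs of (b) and (c) follow the paper's own argument essentially verbatim: the identity $U(z)=2\int_0^z u\,\pibar(u)\,\rmd u$, the logarithmic differential inequality obtained from \eqref{ass2b}, and then chaining \eqref{ass2}, part (b) and \eqref{bdd(b)} at $z=b_t$, $\lambda=x$. Part (a) is where you genuinely diverge. The paper proves \eqref{lem5} by first showing that $\pibar(z)/\pibar(2z)$ is bounded via contradiction (an unbounded ratio along some $z_k\dto 0$ would force $(2z_k)^2\pibar(2z_k)=o(U(2z_k))$, contradicting \eqref{ass2b}) and then iterating over dyadic scales, which yields $C_5\lambda^{D}$ with $D=\log_2 M_1$ for an unspecified bound $M_1$. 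Your direct route
\[
\pibar(z)\le\frac{C_2V(z)}{z^2}\le\frac{C_2V(\lambda z)}{z^2}=C_2\lambda^2\,\frac{V(\lambda z)}{(\lambda z)^2}\le\frac{C_2}{C_1}\lambda^2\,\pibar(\lambda z)
\]
is shorter, avoids both the contradiction and the dyadic chaining, and gives explicit constants $C_5=C_2/C_1$, $D=2$. Since the lemma only asserts the existence of some $C_5,D$, and the downstream uses of $D$ (in \eqref{8c}, \eqref{9j}, \eqref{alpha_1}) are insensitive to its value, $D=2$ is perfectly adequate; the only thing the paper's longer argument could buy is a smaller exponent, which is never needed. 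Your flagged point about absolute continuity of $U$ despite possible atoms of $\Pi$ is the right one to worry about, and your cancellation remark is correct — it is made rigorous by Fubini, which gives $\int_{0<|y|\le z}y^2\,\Pi(\rmd y)=2\int_0^z u\bigl(\pibar(u)-\pibar(z)\bigr)\rmd u$ and hence $U(z)=2\int_0^z u\,\pibar(u)\,\rmd u$ exactly as claimed.
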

\begin{proof}[Proof of Lemma \ref{bdds}:]
(a) Suppose for any $0 < z \le 1$ that $\pibar(z)/\pibar(2z)\le M_1$ for some $1\le M_1 < \infty$. Take $\lambda > 1$ and $k \ge 1$ such that $2^{k-1}<\lambda \le 2^k$. Then 
\begin{align*}\label{5a}
\frac{\pibar(z)}{\pibar(\lambda z)}\le \frac{\pibar(z)}{\pibar(2^k z)} = \frac{\pibar(z)}{\pibar(2z)}\frac{\pibar(2z)}{\pibar(2^2z)}\cdots \frac{\pibar(2^{k-1}z)}{\pibar(2^k z)} \le M_1^k,
\end{align*} for all $2^k z \le 1$.
This gives
\[ M_1^k = 2^{k\log_2M_1} = 2^{\log_2M_1}2^{(k-1)\log_2M_1}\le M_1 \lambda^{\log_2 M_1} = C_5 \lambda^D.
\]
Hence to show \eqref{lem5}, it is sufficient to show that $\pibar(z)/\pibar(2z)$ is bounded for $0<z\le 1/2$. Suppose not. Then there exists a subsequence $\{z_k \dto 0\}$ such that $\pibar(z_k)/\pibar(2z_k) \to \infty$. Since then
\begin{align*}\label{5b}
\frac{z_k^2(\pibar(z_k)-\pibar(2z_k))}{(2z_k)^2 \pibar(2z_k)}= \frac 1 4 \left(\frac{\pibar(z_k)}{\pibar(2z_k)}-1\right)\to \infty,
\end{align*}
we have
\[(2z_k)^2 \pibar(2z_k) = o(z_k^2(\pibar(z_k)-\pibar(2z_k))) 
                        = o(z_k^2 \pibar(z_k))
                         = o(U(z_k))=  o(U(2z_k)). 
 \]
This implies that $(2z_k)^2 \pibar(2z_k) = o(U(2z_k))$, which contradicts \eqref{ass2b}.

\bigskip
(b) We follow a similar argument as Feller \cite{feller67}.  
Let $\lambda \ge 1$ and $\rho = 2C_2/(1+C_2)$. From \eqref{ass2b}, for $0<z < y \le z \lambda \le 1$,  
\[ \frac{2y\pibar(y)}{U(y)}\le \frac{2C_2}{1+C_2} \frac{1}{y} = \frac{\rho}{y}.
\]
Observe that integration by parts gives $U(z)= 2\int_0^z y \pibar(y)\rmd y$. In particular $U$ is absolutely continuous with a.e. derivative $U'(z) = 2 z\pibar(z)$. For $x\ge 1$ and $xz \le 1$,  integrate to get 
\begin{align*}\label{fel_a}
\log\left(\frac{U(z\lambda)}{U(z)}\right)= \int_{z}^{z\lambda}\frac{2y\pibar(y)}{U(y)} \rmd y \le \rho\int_z^{z\lambda}\frac 1 y \rmd y = \rho \log \lambda,
\end{align*} giving $U(z\lambda)/U(z)\le \lambda^\rho$. Then
\begin{equation*}\label{fel_b}
V(z\lambda)\le U(z\lambda)\le \lambda^\rho U(z)=\lambda^\rho[V(z)+z^2\pibar(z)]\le \lambda^\rho V(z)(1+C_2).
\end{equation*} This proves \eqref{bdd(c)}.

\bigskip
(c) For $x\ge 1$, and $t <t_2(x)$ so that $x b_t \le 1$, combine \eqref{bdd(b)} with \eqref{lem5} to get
\begin{equation}\label{ass5} 
t\pibar(xb_t) \ge C_5^{-1}x^{-D}t\pibar(b_t)\ge C_3C_5^{-1}x^{-D}.
\end{equation}
Also by \eqref{ass2} and \eqref{bdd(c)} with $x b_t \le 1$,
\begin{align}\label{ass7}
\pibar(x b_t)&\le C_2 \frac{V(x b_t)}{x^2 b^2_t}
			= C_2 \frac{V(xb_t)}{V(b_t)} x^{-2} \frac{V(b_t)}{b_t^2} 
            \le \frac{C_2}{C_1}(1+C_2)x^{\rho-2}\pibar(b_t).
\end{align} 
By \eqref{ass5}, \eqref{ass7} and \eqref{bdd(b)}, we have
\[ C_3 C_5^{-1} x^{-D} \le t\pibar(x b_t) \le \frac{C_2C_4}{C_1}(1+C_2)x^{\rho-2} : = C_7 x^{\rho-2},
\]hence completing the proof of \eqref{ass9}.

\end{proof}

\begin{lemma}\label{bt}
Suppose \eqref{convStb.asy} or \eqref{convStb.mod} holds. Then the norming function $b_t \equiv b(t)$ satisfies 
\be\label{bt-1}
 b(1/n) \sim b(1/(n+1)).
\ee
\end{lemma}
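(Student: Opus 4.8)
The plan is to argue by contradiction, using that $b$ is nondecreasing (so $\lambda_n:=b(1/n)/b(1/(n+1))\ge1$ and it suffices to prove $\limsup_n\lambda_n\le1$) and that the limit $G$ in \eqref{convStb.asy} is non-degenerate, the latter because the normal and degenerate cases have already been excluded. First I would record that $\{\lambda_n\}$ is bounded: for $n$ large enough that $b_{1/n}\le1$, apply the estimate \eqref{ass7} (equivalently the second inequality of \eqref{ass9}) with $t=1/(n+1)$ and $x=\lambda_n$, noting that $\lambda_n b_{1/(n+1)}=b_{1/n}$, to get $\pibar(b_{1/(n+1)})/\pibar(b_{1/n})\ge \big(C_1/(C_2(1+C_2))\big)\lambda_n^{\,2-\rho}$, while \eqref{bdd(b)} forces $\pibar(b_{1/(n+1)})/\pibar(b_{1/n})<(n+1)C_4/(nC_3)$; since $\rho<2$ this bounds $\lambda_n$. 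Consequently, from any subsequence one may pass to a further one along which $\lambda_{n_k}\to L\in[1,\infty)$, and the whole problem reduces to excluding $L>1$.

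The core step would be to compare ${}^{(r,s)}X_{1/n}$ with ${}^{(r,s)}X_{1/(n+1)}$, which differ only through the increment of $X$ over the short interval $(1/(n+1),1/n]$, of length $1/(n(n+1))$. Writing $Y_n:=X_{1/n}-X_{1/(n+1)}\eqd X_{1/(n(n+1))}$ and choosing the deterministic centering $c_n:=\frac1{n(n+1)}\nu(b_{1/(n+1)})$ with $\nu$ as in \eqref{vdef}, I would prove
\[
\Big({}^{(r,s)}X_{1/n}-{}^{(r,s)}X_{1/(n+1)}-c_n\Big)\big/\,b_{1/(n+1)}\ \topr\ 0 .
\]
This rests on two facts. (i) An order-statistics comparison: on the event $E_n$ that no jump of $X$ in $(1/(n+1),1/n]$ exceeds $\veps b_{1/(n+1)}$ in modulus, the sum of the $r$ largest positive (resp.\ $s$ largest negative) jumps on $[0,1/n]$ differs from that on $[0,1/(n+1)]$ by at most $r\veps b_{1/(n+1)}$ (resp.\ $s\veps b_{1/(n+1)}$), whence $|{}^{(r,s)}X_{1/n}-{}^{(r,s)}X_{1/(n+1)}-Y_n|\le(r+s)\veps b_{1/(n+1)}$ on $E_n$; and $P(E_n^c)\le\frac1{n(n+1)}\pibar(\veps b_{1/(n+1)})\le\mathrm{const}\cdot\veps^{-D}/n\to0$ by \eqref{lem5} and \eqref{bdd(b)}. (ii) Negligibility of the increment: since $c_n$ is chosen so that the L\'evy cutoff matches $b_{1/(n+1)}$, the drift term cancels and the characteristic function of $(Y_n-c_n)/b_{1/(n+1)}$ is $\exp\!\big(\tfrac1{n(n+1)}\!\int(e^{\rmi\theta x/b_{1/(n+1)}}-1-\rmi\theta x b_{1/(n+1)}^{-1}{\bf1}_{\{|x|\le b_{1/(n+1)}\}})\Pi(\rmd x)\big)$, whose exponent is bounded in modulus by $\frac1{n(n+1)}\big(\tfrac{\theta^2}2 V(b_{1/(n+1)})b_{1/(n+1)}^{-2}+2\pibar(b_{1/(n+1)})\big)\le\mathrm{const}\cdot\theta^2/n\to0$ by \eqref{ass2} and \eqref{bdd(b)}, so $(Y_n-c_n)/b_{1/(n+1)}\topr0$. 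Combining (i) and (ii) gives the display.

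Finally I would close with a convergence-of-types argument. Dividing by $b_{1/(n+1)}$ instead of $b_{1/n}$ and invoking the display,
\[
\lambda_n\,{}^{(r,s)}S_{1/n}=\frac{{}^{(r,s)}X_{1/n}-a_{1/n}}{b_{1/(n+1)}}={}^{(r,s)}S_{1/(n+1)}+d_n+o_P(1),\qquad d_n:=\frac{a_{1/(n+1)}+c_n-a_{1/n}}{b_{1/(n+1)}}\in\RR .
\]
Along $n_k$, the left side converges in distribution to the law of $L\,W$, where $W$ has law $G$, while ${}^{(r,s)}S_{1/(n_k+1)}+o_P(1)\todr G$; tightness of both sides forces the deterministic $d_{n_k}$ to be bounded, so along a further subsequence $d_{n_k}\to d$ and $L\,W\eqd W+d$. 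Equivalently $\wh G(\theta)=e^{-\rmi d\theta/L}\wh G(\theta/L)$; iterating this and letting the number of iterations tend to infinity yields $\wh G(\theta)=e^{-\rmi d\theta/(L-1)}$, so $G$ is degenerate — contradicting non-degeneracy. Hence $L=1$, which with $\lambda_n\ge1$ is precisely \eqref{bt-1}; the modulus-trimmed case \eqref{convStb.mod} is handled identically, with the ordered jumps in modulus replacing the signed ones in (i). The hard part will be making (i) rigorous when jump values may be tied (the paper's ordered-statistics conventions handle this) and choosing $c_n$ in (ii) correctly, since it is precisely this choice that lets the drift be absorbed so the estimate closes using only \eqref{ass2} and \eqref{bdd(b)}.
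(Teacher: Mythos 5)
Your proposal is correct and follows essentially the same route as the paper: compare the trimmed process at the consecutive times $1/(n+1)$ and $1/n$, show that both the increment of $X$ over $(1/(n+1),1/n]$ (suitably centered by $\nu$) and the resulting changes in the ordered jumps are $o_p(b_{1/(n+1)})$, and conclude with a convergence-of-types argument using non-degeneracy of $G$. The only differences are cosmetic: the paper observes that $\Delta X_{\lambda t}^{(i)}$ is exactly unchanged unless a new jump exceeds $\Delta X_t^{(i)}$ and invokes the convergence-of-types theorem as a citation, while you bound the change by $\veps b_{1/(n+1)}$ on a high-probability event, control the increment via characteristic functions rather than Chebyshev, and re-derive the types argument by iterating $\widehat G(L\theta)=e^{\rmi\theta d}\widehat G(\theta)$ --- all of which is sound.
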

\begin{remark}\label{trim_norm}
\eqref{bt-1} generalises a similar result in Maller and Mason \cite{MM08} who show that \eqref{bt-1} holds when $S_t$ converges. They show further that the convergence of the untrimmed process to a stable process with $0<\alpha<2 $ implies $b_t \in RV(1/\alpha)$ at $0$. 
\end{remark}
\begin{proof}[ Proof of Lemma \ref{bt}:]
First, we consider only the positively trimmed case, i.e let $^{(r)}S_t = ({}^{(r)}X_t - a_t) / b_t$ converge to a random variable $Y$ in distribution as $t\dto 0$. Let $\lambda > 1$ be fixed. Then $^{(r)}S_{\lambda t} \todr Y$ as $t \dto 0$.   Write
\be\label{bt-2}
{}^{(r)}S_t  = \left(\frac{b_{\lambda t}}{b_t}\right) {}^{(r)}S_{\lambda t} - \frac{X_{\lambda t}- X_t}{b_t} + \sum_{i=1}^r \frac{\Delta X_{\lambda t}^{(i)}-\Delta X_t^{(i)}}{b_t} + \frac{a_{\lambda t} -a_t}{b_t}.
\ee
For each $\veps >0$, 
\begin{align}\label{bt-3}
&P\left(|X_{\lambda t} - X_t -(\lambda-1)t \nu(b_t)| > \veps b_t\right) \nonumber  \\
\le &P\left(|X_{(\lambda-1)t} -(\lambda-1)t \nu(b_t)| > \veps b_t, \, |\wt{\Delta X}_{(\lambda-1)t}^{(1)}| < b_t\right) + P(|\wt{\Delta X}_{(\lambda -1)t}^{(1)}| > b_t) \nonumber \\
\le & \frac{(\lambda-1)tV(b_t)}{\veps^2 b_t^2} +(\lambda-1)t\pibar(b_t).
\end{align} Choose subsequences $t_n = 1/(n+1)$ and $\lambda_n = 1+ 1/n$, then $\lambda_n t_n = 1/n$. Note also that by \eqref{bdd(b)}, we have $t\pibar(b_t) < C_4$. Then 
\[ (\lambda_n -1)t_n \pibar(b_{t_n}) \le \frac{1}{n} C_4 \to 0, \quad \text{as } n \to \infty.
\] Also by \eqref{ass2}, we have for each $\veps > 0$, 
\[ \frac{(\lambda_n -1)t_n V(b_{t_n})}{\veps^2 b^2_{t_n}} \le \frac 1 n \frac{\veps^{-2}}{C_1} t_n \pibar(b_{t_n}) \le \frac 1 n \frac{\veps^{-2}}{C_1} C_4 \to 0, \quad \text{as } n \to \infty.  
\] Therefore, we see that the last line of \eqref{bt-3} tends to $0$ along the subsequences $\{t_n\}$ and $\{\lambda_n\}$. This implies 
\be\label{bt-4}
\frac{X_{\lambda_n t_n} -X_{t_n} -(\lambda_n -1)t_n \nu(b_{t_n}) }{b_{t_n}} \topr 0, \quad \text{as } n \to \infty.
\ee
Next note that for each $i = 1, \ldots ,r$, 
\begin{align}\label{bt-5} 
&P(|\Delta X_{\lambda t}^{(i)} - \Delta X_t^{(i)}| > \veps b_t)
\le P\left( |\Delta X_{\lambda t}^{(i)} - \Delta X_t^{(i)}| > \veps b_t , \, \text{no jump exceeds }   \Delta X_t^{(i)} \text{ on } (t, \lambda t]  \right) \nonumber \\
&\hspace*{2in} + P\left( \text{at least one jump $\Delta X_s$ exceeds } \Delta X_t^{(i)} \text{ for } s \in(t, \lambda t] \right)\nonumber \\
&= 0 + \int_0^\infty \left( 1 - P(\text{no jump exceeds } \pibarpinv(v/t) \text{ on } (t, \lambda t]  \right) P(\Gamma_i \in \rmd v) \nonumber \\
& = \int_0^\infty \left(1 - e^{-(\lambda-1)t\pibar^+(\pibarpinv(v/t))} \right) P(\Gamma_i \in \rmd v)\nonumber \\
&\le (\lambda -1) t \int_0^\infty v P(\Gamma_i \in \rmd v),
\end{align}  
where $\Gamma_i$ is $Gamma(i,1)$  and the last inequality holds because $\pibar^+(\pibarpinv(x))\le x$ and $1-e^{-x} \le x$ for $x > 0$. 
Again choose $t_n = 1/(n+1)$ and $\lambda_n = 1+ 1/n$. Then the RHS of \eqref{bt-5} is less than $(1/n) E(\Gamma_i) \to 0$.
 Therefore we have 
\be\label{bt-6}
 \sum_{i=1}^r \frac{\Delta X_{\lambda_n t_n}^{(i)} - \Delta X_{t_n}^{(i)}}{b_{t_n}} \topr 0 \quad \text{as } n \to \infty.
\ee
Substitute \eqref{bt-4} and \eqref{bt-6} into \eqref{bt-2}, and let 
\[ d_n : = \frac{
a_{1/n} - a_{1/(n+1)} - \nu(b_{1/(n+1)})/n(n+1)}{b_{1/(n+1)}}.
\]  Then we have shown that
\ben\label{bt-7} 
^{(r)}S_{1/(n+1)} = \frac{b(1/n)}{b(1/(n+1))} {}^{(r)}S_{1/n} - d_n+ o_p(1) \to Y \quad \text{as } n \to \infty.
\een 
But $^{(r)}S_{1/n} \to Y$ as well. Applying the convergence of types theorem (see e.g. Gnedenko and Kolmogorov \cite{GK1954} Theorem 10.2), we have both
\ben\label{bt-8}
\frac{b(1/n)}{b(1/(n+1))} \to 1 \quad \text{and}\quad d_n \to 0
 \text { as } n \to \infty. 
\een This completes the proof of \eqref{bt-1} for the positively trimmed process. With a similar argument as in \eqref{bt-5}, we can show that as $n \to \infty$,
\[ \sum_{j=1}^s \frac{\Delta X_{\lambda_n t_n}^{(j),-}- \Delta X_{t_n}^{(j),-}}{b_{t_n}} \topr 0.
\]  Hence \eqref{bt-1} can be proven similarly for the asymmetrically trimmed case. Similarly the same argument holds if we assume \eqref{convStb.mod} instead.
\end{proof}

\section{Proof of Theorem \ref{convStb}: Forward Direction}\label{sect:prf}

First we will deal with the easy direction of Theorem \ref{convStb}. Let $W$ be the limit in distribution of $(X_t-a_t)/b_t$ as $t \dto 0$. If $W$ is a normal or degenerate random variable, by Fan \cite{fan2014an} (see Theorem 1.2), we have that all normed ordered jumps converge to $0$ as $t \dto 0$ and the corresponding trimmed processes converge to the same normal or degenerate distribution. Hence we can assume that $W$ is a non-degenerate and non-normal random variable. By Maller and Mason \cite{MM08} (Theorem 2.3), $W$ is necessarily a stable random variable with index $\alpha\in (0,2)$. Therefore $X_t$ is in the domain of attraction of a stable law. This implies that the tail of the L\'evy measure $\pibar$ is regularly varying with index $-\alpha$ at $0$, i.e. 
\[ \lim_{z \dto 0} \frac{\pibar(xz)}{\pibar(z)} = x^{-\alpha}, \quad x > 0.
\] This further implies that the L\'evy measure has no atoms asymptotically, that is 
\be\label{noay0}
\lim_{z \dto 0}\frac{\Delta \pibar^\pm(z)}{\pibar(z)} = 0
\ee where $\Delta \pibar^\pm(z) = \pibar^\pm(z-)-\pibar^\pm(z)$. To see this, observe that 
\[ 
0 \le \frac{\pibar(z-) - \pibar(z)}{\pibar(z)} \le \frac{\pibar(z(1-\veps))}{\pibar(z)}-1 \stackrel{z \dto 0}{\longrightarrow} (1-\veps)^{-\alpha}-1 \stackrel{\veps \dto 0}{\longrightarrow} 0.
\]

Now we need to introduce the distributional representations from Buchmann et. al. \cite{bfm14} and Fan \cite{fan2014an}. Define three families of processes, indexed by $w>0$, truncating jumps greater than $w$ or smaller than $-w$ from sample paths of $X_t$.
Let $w,t>0$. When $\pibar(0+)=\infty$, we set
\begin{equation}\label{noay0a}
 X_t^{< w}:=X_t-\sum_{0<s\le t} \Delta X_s\;{\bf 1}_{\{\Delta X_s\ge w \}}, \quad
 X_t^{>-w}:=X_t-\sum_{0<s\le t} \Delta X_s {\bf 1}_{\{\Delta X_s\le -w\}},
\end{equation}
and for the modulus case, we truncate jumps with magnitude greater or equal to $w$, i.e.
\begin{equation}\label{noay0b}
\wt X_t^w:=X_t-\sum_{0<s\le t} \Delta X_s\;{\bf 1}_{\{|\Delta X_s|\ge w\}}.
\end{equation}
Recall that the canonical triplet for $X$ is $(\gamma, \sigma^2, \Pi)$. Under the assumption $\pibar(0+)=\infty$,  $(X_t^{<w})_{t\ge 0}$, $(X_t^{ > -w})_{t\ge 0}$ and $(\wt X_t^w)_{t\ge 0}$ are well defined L\'evy processes with canonical triplets, respectively,
\begin{align*}\label{tri_0c}
\left( \gamma  -{\bf 1}_{\{w \le 1\}}\int_{w \le x \le 1}x \Pi(\rmd x),\  \sigma^2, \,  \Pi(\rmd x){\bf 1}_{\{x<w\}} \right),\nonumber \\
\left( \gamma  +{\bf 1}_{\{w \le 1\}}\int_{w \le x \le 1}x \Pi^-(\rmd x),\  \sigma^2, \,  \Pi(\rmd x){\bf 1}_{\{x>-w\}} \right)
\end{align*}
and
\begin{equation}\label{trip2}
\left(\gamma-{\bf 1}_{\{w \le 1\}}\int_{w \le |x| \le 1}x \Pi(\rmd x),\,  \sigma^2,\,  \Pi(\rmd x){\bf 1}_{\{|x|<w\}}\right).
\end{equation}
By Theorem 2.1 in \cite{bfm14} and Section 2 in \cite{fan2014an}, an $r,s$-trimmed process has the following representation. 
Let $(Y^\pm_t)$ be  Poisson processes with unit mean, independent of $(X_t)$ and of each other. Define random variables
\[G^{\pm, w}_t = \pibar^{\pm,\leftarrow}(w) Y^\pm_{t\rho_\pm(w)}  \quad \text{and} \quad 
\rho_\pm(w) = \pibar^\pm(\pibar^{\pm, \leftarrow}(w)-)-w , \quad \text{for each } t, w > 0.
\]
The $G^{\pm,w}_t$ random variables reflect the possibilities of ties among the ordered jumps.
For each $u,v >0$, let $X_t^{u/t,v/t}$ be an infinitely divisible random variable with  
characteristic triplet
\[ \left( \gamma_{u/t,v/t} , 0 , \Pi(\rmd x)_{\{-\pibarpinv(u/t) < x < \pibarpinv(v/t)\}}\right),
\]where
\ben\label{gmauv}
\gamma_{u/t,v/t} =\gamma-{\bf 1}_{\{\pibarpinv(v/t)\le 1\}}\int_{\pibarpinv(v/t)\le x \le 1}x \Pi(\rmd x) + {\bf 1}_{\{\pibarminv(u/t)\le 1\}}\int_{\pibarminv(u/t)\le x \le 1}x \Pi^-(\rmd x).
\een
For each $r, s \in \N$, let $\Gamma_r$ and $\wt \Gamma_s$ be standard Gamma random variables with parameters $r$ and $s$, independent of $(X_t)_{t\ge 0}$,  $(Y^\pm_t)_{t\ge 0}$ as well as each other.  Then for each $t > 0$, we have the following representations for the trimmed processes, asymmetrically,
\begin{align}\label{rand_dis3}
&\left({}^{(r,s)} X_t, \, {\Delta X}_t^{(r)}, \, \Delta X_t^{(s),-}\right) \nonumber \\
&\eqdr \left(  X_t^{u,v}+ G_t^{+,v}- G_t^{-,u},\,  \pibarpinv\left(v\right), \pibarminv\left(u\right)  \right) \bigg|_{v=\Gamma_r/t, u = \wt\Gamma_s/t}.
\end{align}

For each $v>0$, recall the modulus truncated process
$(\wt X^{\pibarinv(v)}_t)_{t\ge 0}$  in  \eqref{noay0b} with canonical triplet defined in \eqref{trip2}.
Then, for each $t>0$ and $r \in \N$, we have the representation
\begin{equation*}\label{2rrep_1}
\left(^{(r)}\wt X_t,\,|\wt{\Delta X}_t^{(r)}|\right)
\eqdr
\left(\wt X_t^{v}+\wt G_t^v, \, \pibarinv\left(v\right)\right) \bigg|_{v=\Gamma_r/t},
\end{equation*} where $\wt G_t^v = \pibarinv(v)(Y^+_{t\kappa^+(v)} - Y^-_{t\kappa^-(v)})$ and
\ben\label{kappm}\kappa^\pm(v) = (\pibar(\pibarinv(v)-)-v)\frac{\Pi\{\pm \pibarinv(v)\}}{\Pi^{|\cdot|}\{\pibarinv(v)\}} {\bf 1}_{\Pi^{|\cdot|}\{\pibarinv(v)\} \neq 0}.
\een

With the above considerations, let's prove the easy direction of Theorem \ref{convStb} in the following lemma.
\begin{lemma}\label{easy_dir}
\eqref{convStb2} implies \eqref{convStb.asy} and \eqref{convStb.mod}.
\end{lemma}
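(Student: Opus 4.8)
The plan is to establish the ``easy'' (forward) direction by exploiting the distributional representations \eqref{rand_dis3} and its modulus counterpart, together with the regular variation of $\pibar$ that follows from \eqref{convStb2}. Assume \eqref{convStb2} holds; as already noted, if the limit $W$ is normal or degenerate the conclusion is Theorem 1.2 of \cite{fan2014an}, so we may assume $W$ is $\alpha$-stable with $\alpha \in (0,2)$, $X$ is in the domain of attraction of a stable law, $\pibar \in RV(-\alpha)$ at $0$, and \eqref{noay0} holds. First I would fix $r,s \in \NN$ and, using the representation \eqref{rand_dis3}, analyse each of the three coordinates on the right-hand side separately as $t \dto 0$ with $v = \Gamma_r/t$, $u = \wt\Gamma_s/t$ held along the Gamma randomisation. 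For the ordered-jump coordinates, $\pibarpinv(\Gamma_r/t)/b_t$ and $\pibarminv(\wt\Gamma_s/t)/b_t$ converge in distribution to explicit functionals of $\Gamma_r$ and $\wt\Gamma_s$: indeed by the regular variation of $\pibar^\pm$ (which inherits an index from $\pibar$ once $\lim_{z\dto 0}\pibar^\pm(z)/\pibar(z)$ exists, as it does in the domain of attraction) and the standard relation between a regularly varying function and its inverse, $\pibarpinv(y/t)/b_t$ has a nondegenerate limit as $t\dto 0$ for each fixed $y>0$, uniformly enough to pass through the Gamma integral.

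Next I would handle the truncated infinitely divisible part $X_t^{u,v}$ with $u,v$ proportional to $1/t$. Here the Kallenberg-type convergence criterion quoted in the proof of Lemma \ref{A3} is the right tool: I would show that the L\'evy measure $\Pi(\rmd x){\bf 1}_{\{-\pibarminv(u/t)<x<\pibarpinv(v/t)\}}$, rescaled by $b_t$ and weighted by $t$, converges vaguely to the (truncated) $\alpha$-stable L\'evy measure, and that the truncated variance term $tV(xb_t)/(x^2b_t^2)$ converges; both follow from $\pibar \in RV(-\alpha)$, the existence of $\lim \pibar^\pm/\pibar$, the Feller-class bounds \eqref{ass2}–\eqref{ass9}, and Karamata's theorem for the variance. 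The centering $\gamma_{u/t,v/t}$ together with $a_t$ must be checked to produce a convergent (possibly $\alpha$-dependent) shift; for $\alpha \ne 1$ this is routine via Karamata, and for $\alpha = 1$ one uses the symmetry/cancellation already available because $X$ is attracted to a stable law. The upshot is that $(X_t^{u/t,v/t} - a_t)/b_t$ converges in distribution, jointly with the ordered-jump coordinates, to a truncated-at-level-$(\Gamma_r,\wt\Gamma_s)$ $\alpha$-stable variable.

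The tie-correction terms $G_t^{\pm,w}$ are where the only real subtlety lies. By \eqref{noay0}, $\rho_\pm(w) = \pibar^\pm(\pibarpminv(w)-) - w \le \Delta\pibar^\pm(\pibarpminv(w)) = o(\pibar(\pibarpminv(w))) \asymp o(w)$ as $w \dto 0$ along the relevant scales; hence with $w = \Gamma_r/t$ one gets $t\rho_+(w)/\cdot \to 0$ suitably, so $Y^+_{t\rho_+(\Gamma_r/t)} \topr 0$, and therefore $G_t^{+,\Gamma_r/t}/b_t = (\pibarpinv(\Gamma_r/t)/b_t)\,Y^+_{t\rho_+(\Gamma_r/t)} \topr 0$ because the first factor is tight by \eqref{bbd0b}; similarly for $G_t^{-,u}$ and, in the modulus case, for $\wt G_t^v$ using $\kappa^\pm(v) \le \pibar(\pibarinv(v)-)-v = o(v)$. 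I expect this estimate — showing the Poisson tie-terms wash out — to be the main obstacle, since it requires coupling the ``$o$'' from \eqref{noay0} to the randomised scale $w = \Gamma_r/t$ uniformly over the Gamma law; but it is handled by conditioning on $\Gamma_r$, applying \eqref{noay0} and \eqref{bbd0b} pathwise, and dominated convergence. Finally I would assemble the pieces: by the continuous mapping theorem and Slutsky, the right-hand side of \eqref{rand_dis3} minus $a_t$, over $b_t$, converges in distribution to $X_1^{u,v} + (\text{limit of ordered jumps}) - 0$ evaluated at $v = \Gamma_r, u = \wt\Gamma_s$, which is exactly $\left({}^{(r,s)}Z_1 - a\right)$ for the $\alpha$-stable process $Z$ (the ordered-jump limits recombining with the truncated-stable limit to give a trimmed $\alpha$-stable variable); hence ${}^{(r,s)}S_t$ converges in distribution, proving \eqref{convStb.asy}. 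The modulus case \eqref{convStb.mod} is identical, using the modulus representation with $\wt X_t^{\pibarinv(v)}$, $\wt G_t^v$, and $\kappa^\pm$ in place of their asymmetric analogues.
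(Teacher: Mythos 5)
Your proposal follows essentially the same route as the paper: condition on the Gamma randomisation in \eqref{rand_dis3}, kill the tie terms $G_t^{\pm,\cdot}$ using \eqref{noay0} (so that $t\rho_\pm(v/t)\to 0$ and the Poisson factors vanish in probability), invoke convergence of the truncated process $(X_t^{u/t,v/t}-a_t)/b_t \to Y^{u,v}$ (the paper simply cites Lemma 2.1 of \cite{fan2014an} rather than re-verifying Kallenberg's criterion and the $\alpha=1$ centering), and finish by dominated convergence over the law of $(\Gamma_r,\wt\Gamma_s)$. One correction to your final assembly: the first coordinate of \eqref{rand_dis3} is \emph{already} the trimmed process, namely $X_t^{u,v}+G_t^{+,v}-G_t^{-,u}$, so there is no ``recombining'' of the ordered-jump limits with the truncated-stable limit --- adding those back would produce the wrong object; the limit of ${}^{(r,s)}S_t$ is simply $Y^{u,v}$ randomised at $u=\wt\Gamma_s$, $v=\Gamma_r$, which is identified with a trimmed stable variable only afterwards (as in Remark \ref{trimC}) by applying the representation formula to the stable limit process. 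This does not affect the validity of the convergence claim, which is all the lemma asserts, and your separate analysis of $\pibarpminv(\Gamma_r/t)/b_t$ is not needed for the marginal statement.
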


\begin{proof}[Proof of Lemma \ref{easy_dir}:]
From the above analysis, without loss of generality, we can suppose 
$\sigma^2 = 0$ and that the limit random variable $Y$ is infinitely divisible with triplet $(0,0,\Lambda)$ where $\lambar(x) = c x^{-\alpha}$ for some constant $c >0$ and $\alpha \in (0,2)$. From the representation formula in \eqref{rand_dis3}, we have, for each $x > 0$,
\be\label{easy_dir1}
P\left(\frac{{}^{(r,s)}X_t - a_t}{b_t} \le x \right)
= \int_{u,v \in (0,\infty)} P\left(\frac{X_t^{u/t,v/t} + G_t^{+,v/t} - G_t^{-,u/t} - a_t}{b_t} \le x \right)P(\Gamma_r \in \rmd v, \wt \Gamma_s \in \rmd u).
\ee
By separating the events with tied values and without, we get from \eqref{easy_dir1} that 
\begin{align}\label{easy_dir2}
&P\left(\frac{{}^{(r,s)}X_t - a_t}{b_t} \le x \right) \nonumber \\
= &\int_{u,v \in (0,\infty)} P\left(\frac{X_t^{u/t,v/t}- a_t}{b_t} \le x,  G_t^{+,v/t}-G_t^{-,u/t} = 0  \right)P(\Gamma_r \in \rmd v, \wt \Gamma_s \in \rmd u) + \delta_t
\end{align} where
\begin{align*}\label{easy_3}
\delta_t 
&= \int_{u,v \in (0,\infty)} P\left(\frac{X_t^{u/t,v/t} + G_t^{+,v/t} - G_t^{-,u/t} - a_t}{b_t} \le x, G_t^{+,v/t}-G_t^{-,u/t} \neq 0	 \right)P(\Gamma_r \in \rmd v, \wt \Gamma_s \in \rmd u). \nonumber \\
\end{align*}
Now 
\be\label{easy_3a}
 P( G_t^{+,v/t}-G_t^{-,u/t} \neq 0)\le 1-P(Y_{t\rho_+(v/t)} = 0, Y_{t\rho_-(u/t)} = 0 ) 
\ee
in which
\begin{align*}
P(Y_{t\rho_+(v/t)} = 0, Y_{t\rho_-(u/t)} = 0 ) 
= \exp(-t\pibar^+(\pibarpinv(v/t)-)+v)\exp(-t\pibar^+(\pibarpinv(u/t)-)+u).
\end{align*}
By \eqref{noay0}, for each $\veps >0$, we have $\Delta \pibar(z) \le \veps \pibar(z)$ for sufficiently small $z > 0$. Hence, for each $v>0$, for sufficiently small $t > 0$, we have $\pibarpminv(v/t)$ small enough that
\begin{align*}\label{easy_5}
0< t\pibar^\pm(\pibarpminv(v/t)-)-v 
&\le t  \Delta \pibar^+ ( \pibarpinv(v/t) ) + t \Delta \pibar^-(-\pibarminv(v/t))\nonumber \\
& \le \veps t \pibar^+(\pibarpinv(v/t)) + \veps t \pibar^-(\pibarminv(v/t))  \le 2\veps v .
\end{align*} 

Letting $\veps \to 0$ shows that the RHS of \eqref{easy_3a} tends to $0$ as $t \dto 0$ for each $v, u >0$. This shows that $P(  G_t^{+,v/t}-G_t^{-,u/t} = 0 )$ tends to $1$ and $\delta_t \to 0$ as $t \dto 0$. Consequently, we can neglect these terms in \eqref{easy_dir2}.

By assuming \eqref{convStb2}, we have also the convergence of the centered and normed truncated process (see Lemma 2.1 in \cite{fan2014an}), i.e. 
\[\frac{X_t^{u/t,v/t}- a_t}{b_t} \to Y^{u,v}, \quad \text{as } t \dto 0, \quad \text{for each } u, v >0. 
\] where $Y^{u,v}$ is an infinitely divisible random variable with characteristic triplet  $(\beta_{u,v}, 0, \Lambda_{u,v})$ given by
\ben\label{cut_trp13}
\beta_{u,v}= -{\bf 1}_{\{\lampinv(v)\le 1\}}\int_{\lampinv(v)\le y\le 1} y \Lambda(\rmd y) +{\bf 1}_{\{\lamminv(u)\le 1\}}\int_{\lamminv(u)\le y\le 1} y \Lambda^-(\rmd y)  ,
\een and
\[\Lambda_{u,v}(\rmd x)= \Lambda(\rmd x){\bf 1}_{\{ -\lamminv(u)<x<\lampinv(v)\}} \quad \text{for } x \in \R_*.\]
Apply dominated convergence to the integral in \eqref{easy_dir2} to get
\begin{align}\label{easy_6}
&\lim_{t \dto 0}P\left(\frac{{}^{(r,s)}X_t - a_t}{b_t} \le x \right)\nonumber \\
&= \int_{u,v \in (0,\infty)} \lim_{t\dto 0} P\left(\frac{X_t^{u/t,v/t}- a_t}{b_t} \le x \right)P(\Gamma_r \in \rmd v, \wt \Gamma_s \in \rmd u) \nonumber \\
&=\int_{u,v \in (0,\infty)} P\left( Y^{u,v} \le x \right)P(\Gamma_r \in \rmd v, \wt \Gamma_s \in \rmd u) = Y^{u,v}\big|_{u \in \Gamma_r, v \in \wt \Gamma_s} =: W. 
\end{align}
Hence we have proved \eqref{convStb.asy} with $W$ as the limit random variable. 
The proof for \eqref{convStb.mod} is similar. 

\end{proof}

\begin{remark}\label{trimC}
The limit random variable $W$ in \eqref{easy_6} has the distribution of ${}^{(r,s)}Y_1$, where $(Y_t)_{t\ge 0}$ is a stable L\'evy process with canonical triplet $(0,0, \Lambda)$. This can be derived by applying the representation formula (Theorem 2.1 of \cite{bfm14}) again to the stable limit. Hence, 
\[ S_t \to Y \quad \text{implies} \quad {}^{(r,s)}S_t \to {}^{(r,s)}Y_1, \quad \text{as } t \dto 0. 
\] 

An alternative derivation of this is given in \cite{fan_thesis}, where it is shown that the trimming operator as defined in \eqref{trims} is indeed a continuous operator in the space of \cadlag \, functions with respect to Skorokhod's $J_1$ topology.
\end{remark}

\section{Proof of Theorem \ref{convStb}: Converse Direction}

At this stage it is convenient to assume further that 
\be\label{cty.ass}
  \text{the L\'evy measure of $X_t$ is diffuse; that is $\pibar^\pm$ are continuous functions on $(0,\infty)$. }
\ee
Later we will show how to extend the result to full generality.  Assumption \eqref{cty.ass} allows for the following simplification:
\ben\label{cty.ass2}
v = t\pibar^\pm(\pibarpminv(v/t))\le t\pibar^\pm(\pibarpminv(v/t)-) = v, \quad \text{for each } v, t > 0,
\een
which will often be used in what follows. This assumption also means that tied values in the jumps of the $X_t$ occur with $0$ possibility for every $t > 0$. 	\\

%

To proceed, we need both a lower and an upper bound for the tail probabilities of the trimmed process, $P({}^{(r,s)}S_t  > x)$ and $P({}^{(r,s)}S_t  < -x)$,  $x > 0$ in terms of the tails of the corresponding L\'evy measure. We will develop the bounds in Lemmas \ref{lemma6} to \ref{lemma10}. 
Recall that $G$ and $\wt G$ are the limit distributions of $^{(r,s)}S_t$ and ${}^{(r)}\wt S_t$ respectively when $t\dto 0$. 
\begin{lemma}\label{lemma6}
If \eqref{convStb.asy} holds, then for all $\veps >0$, there exist $y_0 = y_0(\veps, G) > 0$ and $x_2(y_0, G) \ge x_1 \ge 1$ such that for all $x >x_2$, $y>y_0$, we have, for sufficiently small $t >0$,
\begin{equation}\label{lem6}
P\big(\Delta X_t^{(r+1)}\ge (x+y)b_t\big) \le (1+\veps)P\big(^{(r,s)}S_t \ge x \big),
\end{equation} and for each $s \in \N$,
\begin{equation}\label{lem6_n}
P\big(\Delta X_t^{(s+1),-}\ge (x+y)b_t\big) \le (1+\veps)P\big(^{(r,s)}S_t \le -x \big).
\end{equation}
If \eqref{convStb.mod} holds, then for all $\veps >0$, $x >x_2$, $y>y_0$ and sufficiently small $t > 0$, 
\begin{equation*}\label{lem6_2a}
P\big(\wt{\Delta X}_t^{(r+1)}\ge (x+y)b_t\big) \le (1+\veps)P\big(^{(r)}\wt S_t\ge x \big),
\end{equation*} and 
\begin{equation}\label{lem6_2b}
P\big(\wt {\Delta X}_t^{(r+1)}\le -(x+y)b_t\big) \le (1+\veps)P\big(^{(r)} \wt S_t \le -x \big).
\end{equation}
\end{lemma}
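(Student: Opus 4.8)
The idea is to use the fact that the trimmed process ${}^{(r,s)}X_t$ is essentially the untrimmed process with a Poisson number of the top jumps removed, and that among those removed jumps the $(r+1)$-st largest jump $\Delta X_t^{(r+1)}$ is still present in ${}^{(r,s)}X_t$ (it is the largest surviving positive jump). So on the event that $\Delta X_t^{(r+1)}$ is very large — of order $(x+y)b_t$ — the trimmed process ${}^{(r,s)}X_t$ should itself be of order at least $xb_t$, because the remaining (truncated) part of the process, centered by $a_t$ and normed by $b_t$, is stochastically bounded (it converges) and so with high probability stays in an interval of length at most $y$. Concretely, I would condition on the event $A_t = \{\Delta X_t^{(r+1)} \ge (x+y)b_t\}$; on this event the remaining positive jumps below $\Delta X_t^{(r+1)}$ and the whole negative part contribute a quantity which, after centering/norming, behaves like the truncated process ${}^{(r,s)}X_t^{u/t,v/t}$ appearing in the representation \eqref{rand_dis3}. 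Using the representation, write
\[
P\big({}^{(r,s)}S_t \ge x\big) \ge P\Big( \Delta X_t^{(r+1)} \ge (x+y)b_t, \ \frac{{}^{(r,s)}X_t - \Delta X_t^{(r+1)} - a_t}{b_t} \ge -y \Big),
\]
and bound the right-hand side from below by
\[
P\big(\Delta X_t^{(r+1)} \ge (x+y)b_t\big) - P\Big( \frac{\,\text{(rest)}\, - a_t}{b_t} < -y \Big).
\]

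The second (subtracted) term is where the work lies: I must show it is small relative to $P(\Delta X_t^{(r+1)} \ge (x+y)b_t)$, uniformly in $x > x_2$ and for $y$ large. The "rest" is the truncated process $X_t^{u/t,v/t}$ together with the tie terms $G_t^{\pm}$ evaluated at the appropriate Gamma-distributed truncation levels; by Lemma \ref{easy_dir}'s analysis (and since we are under \eqref{convStb.asy}, so ${}^{(r,s)}S_t$ is tight, hence $S_t$ is tight and the truncated process is tight by Lemma 2.1 of \cite{fan2014an}), this centered-normed remainder forms a tight family as $t \dto 0$. Tightness gives, for any $\eta > 0$, a $y_0$ such that $P(|{\rm rest} - a_t|/b_t > y_0) \le \eta$ for all small $t$; choosing $y_0$ in terms of $\varepsilon$ and using that $P(\Delta X_t^{(r+1)} \ge (x+y)b_t)$ is bounded below by a positive power $x^{-(r+1)D}$ times a constant (this is exactly \eqref{2max0} combined with the lower bound \eqref{ass9} of Lemma \ref{bdds}, raised to the $(r+1)$-st power), I can absorb the $\eta$ term into the factor $(1+\varepsilon)$ provided $x$ is not too large. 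The catch is that $P(\Delta X_t^{(r+1)} \ge (x+y)b_t)$ itself decays polynomially in $x$, so the ratio $\eta / P(\Delta X_t^{(r+1)} \ge (x+y)b_t)$ can only be controlled on a bounded-but-arbitrarily-large range $x < $ some threshold; hence one first fixes the threshold via the (non-degenerate) limit law $G$ — since $G$ is nondegenerate, $P({}^{(r,s)}S_t \ge x) > 0$ along small $t$ for $x$ in any fixed range — and this is the role of the constant $x_2(y_0,G)$ in the statement. I would make this precise by noting $P({}^{(r,s)}S_t \ge x) \to \overline{G}(x)$ and comparing.

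The main obstacle, then, is the careful bookkeeping of the "rest" term: separating the tie contributions $G_t^{\pm}$ (which vanish in probability as $t \dto 0$ for fixed truncation levels, by the estimate in the proof of Lemma \ref{easy_dir} using \eqref{noay0} — but here \eqref{noay0} is not yet available, so instead I use assumption \eqref{cty.ass}, which forces the $G_t^{\pm}$ to be genuinely zero since the L\'evy measure is diffuse), and showing the truncation levels $v = \Gamma_{r+1}/t$, $u = \wt\Gamma_s/t$ do not cause trouble — the Gamma random variables are tight, and the integrand in the representation is monotone, so I can restrict to $v, u$ in a compact set at the cost of an arbitrarily small error. Once the tie terms are dispensed with (trivially, under \eqref{cty.ass}) and the truncation levels are controlled, the argument reduces to the tightness statement above. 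The negative-tail inequalities \eqref{lem6_n}, and the modulus-trimming inequalities, follow by the identical argument with $\Delta X_t^{(r+1)}$ replaced by $\Delta X_t^{(s+1),-}$ (resp.\ by $\pm\wt{\Delta X}_t^{(r+1)}$), using the negative-tail version of \eqref{2max0} and the representation \eqref{rand_dis3} (resp.\ the modulus representation with $\wt G_t^v$, $\kappa^\pm$, which again vanish under \eqref{cty.ass}).
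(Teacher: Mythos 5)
Your opening decomposition coincides with the paper's: write ${}^{(r,s)}S_t = {}^{(r+1,s)}S_t + \Delta X_t^{(r+1)}/b_t$ and lower-bound $P({}^{(r,s)}S_t \ge x)$ by $P(\Delta X_t^{(r+1)} \ge (x+y)b_t)$ minus the joint probability that the $(r+1)$-st jump is large while the remainder falls below $-y$. The gap is in how you control that correction term. You replace the joint probability by the unconditional tightness bound $P(\mathrm{rest} < -y) \le \eta$; this is an \emph{additive} error, whereas \eqref{lem6} requires the correction to be at most $\veps\, P(\Delta X_t^{(r+1)} \ge (x+y)b_t)$ --- a \emph{multiplicative} error --- uniformly over the whole unbounded range $x > x_2$. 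You notice this yourself (``the catch''), but your proposed fix, restricting $x$ to a bounded range fixed via $G$, does not prove the lemma as stated: $x_2$ is a \emph{lower} bound on $x$, and the lemma is used later (Lemmas \ref{lemma10} and \ref{lemma11}) precisely with $x \to \infty$. Since $P(\Delta X_t^{(r+1)} \ge (x+y)b_t) \le (t\pibar^+((x+y)b_t))^{r+1}/(r+1)! \le C (x+y)^{(\rho-2)(r+1)}$ decays polynomially in $x$ by \eqref{ass9}, no fixed additive $\eta$ can be absorbed into the factor $(1+\veps)$ for all large $x$.

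The missing idea is a decoupling of the top jumps from the remainder, so that the factor $P(\Delta X_t^{(r+1)} \ge (x+y)b_t)$ survives in the error term. The paper fixes a small $v \le \veps$ and splits the correction according to whether $\Delta X_t^{(r+2)} \ge \pibarpinv(v/t)$. On that event, the joint law of $(\Gamma_{r+1},\, \Gamma_{r+1}+\EEEE)$ gives the bound $v\, P(\Delta X_t^{(r+1)} > (x+y)b_t)$ directly, which is already of the required multiplicative form. On the complementary event, exactly $r+1$ jumps exceed the low threshold $\pibarpinv(v/t)$ and all of them exceed $(x+y)b_t$; conditioning on the count $\XX\bigl[[0,t]\times(\pibarpinv(v/t),\infty)\bigr]=r+1$, the point process of jumps below the threshold is independent of the configuration above it, so the joint probability factorizes as $P(\Delta X_t^{(r+1)} \ge (x+y)b_t)$ times a conditional lower-tail probability that no longer depends on $x$. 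That conditional probability is then identified with the lower tail of the truncated process ${}^{(s,-)}X_t^{<\pibarpinv(v/t)}$ and bounded via \eqref{convStb.asy} by $e^{v} G(-y+1) \le \veps$ for $y > y_0(\veps,G)$. Your tightness input enters only at this last step, \emph{after} the factorization; without the factorization the argument does not close.
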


\begin{proof}[Proof of Lemma \ref{lemma6}:]
Here we only prove \eqref{lem6}, \eqref{lem6_n}-\eqref{lem6_2b} are proved by similar arguments.
Assume \eqref{convStb.asy}. Take $x > 0$, $y > 0$. Then
\begin{align}\label{6a}
P(^{(r,s)}S_t \ge x)&= P\left( ^{(r+1,s)}S_t + \Delta X_t^{(r+1)}/b_t \ge  x \right) \nonumber \\
                  &\ge P\left(^{(r+1,s)}S_t\ge -y,\ \Delta X_t^{(r+1)}\ge (x+y) b_t    \right)\nonumber \\
                  &=P\Big(\Delta X_t^{(r+1)}\ge (x+y)b_t\Big) - P\left( ^{(r+1,s)}S_t < -y,\ \Delta X_t^{(r+1)}\ge (x+y) b_t\right) .   
\end{align}
Recall from the bounds in \eqref{bbd0b} and \eqref{bbd0} that for each $v > 0$ and $t > 0$ sufficiently small, we can choose $x+y \ge C(v, G)$ such that  $(x+y)b_t \ge C(v, G)b_t\ge \pibarpinv(v /t)$. Then
\begin{align}\label{6b}
    &P\left( ^{(r+1,s)}S_t< -y,\ \Delta X_t^{(r+1)}\ge (x+y) b_t\right) \nonumber \\
\le & P\left(\Delta X_t^{(r+1)}\ge (x+y) b_t,\ \Delta X_t^{(r+2)} \ge \pibarpinv(v/t)\right)   \nonumber \\
    &+ P\left( ^{(r+1,s)}S_t< -y,\ \Delta X_t^{(r+1)}\ge (x+y) b_t,\ \Delta X_t^{(r+2)} <\pibarpinv(v/t) \right) \nonumber \\
 = :  & {\rm (I)}+ {\rm (II)}.
\end{align}
We would like to show that both ${\rm (I)}$ and ${\rm (II)}$ are of smaller order than $P\Big(\Delta X_t^{(r+1)}\ge (x+y)b_t\Big)$.
Recall the joint distributional formula in \cite{bfm14} (see their Theorem 2.1), from which we can compute the probability (recall that for any $v,y >0$, $\pibarpminv(v)>y$ iff $\pibar^\pm(y) > v$ and also $\pibar$ is assumed to be continuous)
\begin{align*}\label{6c}
{\rm (I)}
&= P\left(\pibarpinv((\Gamma_{r+1}+\EEEE)/t) > \pibarpinv(v/t),\, \pibarpinv(\Gamma_{r+1}/t) > (x+y)b_t\right) \nonumber \\
&=P\left( \Gamma_{r+1} + \EEEE < t\pibar^+(\pibarpinv(v/t)),\, \Gamma_{r+1} < t\pibar^+((x+y)b_t) \right) \nonumber \\
&=\int_0^{t\pibar^+((x+y)b_t)} (1-e^{-(v-u)})P(\Gamma_{r+1} \in \rmd u) \nonumber \\
& \le\int_0^{t\pibar^+((x+y)b_t)} (v-u) P(\Gamma_{r+1} \in \rmd u) \nonumber \\
&\le v  \int_0^{t\pibar^+((x+y)b_t)} P(\Gamma_{r+1} \in \rmd u)= v P(\Delta X_t^{(r+1)} > (x+y)b_t).
\end{align*}
In the last line we used the representation of the order statistics in \eqref{rJ1}.


Let $\XX$ be the Poisson point process of jumps of $X$ up till time $t$. Hence $\XX$ is defined on $[0,t] \times \R$ with intensity measure $\rmd t \times \Pi(\rmd x)$. Since by assumption \eqref{cty.ass} $\Pi$ is a diffuse measure, then $P(\XX[[0,t] \times \{x\}] > 0) = 0$ for each $x \in \R$. Also by the continuity assumption, we have 
\[ P\left(\XX\big[[0,t] \times (\pibarpinv(v/t),\infty)\big] = r\right) = P \left(\XX\big[[0,t] \times [\pibarpinv(v/t),\infty)\big] = r \right) = \frac{v^r}{r!}e^{-v}.
\]

Now we can write the second term in \eqref{6b} as
\begin{align}\label{6e}
{\rm (II)}   &=P\Big( \ ^{(r+1,s)}X_t-a_t< -yb_t,\ \text{exactly $r+1$ jumps $\Delta X_s$ with $s \le t$ exceed }  (x+y) b_t,\  \nonumber \\
           & \hspace*{3in}\text{and no jump occurs in } \big(\pibarpinv(v/t), (x+y)b_t\big) \Big)\nonumber \\
           &\le P\left( ^{(r+1,s)}S_t< -y \,\big| \, \XX \big[[0,t]\times (\pibarpinv(v/t), \infty) \big] = r+1 \right) \nonumber \\
           & \hspace*{3in} \times  P(\XX \big[[0,t]\times ((x+y)b_t, \infty) \big]= r+1)\nonumber \\
           &\le P\left( \Delta X_t^{(r+1)}\ge (x+y) b_t \right) P( ^{(r+1, s)}S_t < -y \,\big| \,\XX \big[[0,t]\times (\pibarpinv(v/t), \infty) \big] = r+1 ).
\end{align}
Recall the definition of $^{(s,-)}X_t$ in \eqref{1s} and the truncated processes in \eqref{noay0a}. Note that 
\begin{align*}
&P\left( ^{(r,s)}X_t < -y \, , \,\XX \big[[0,t]\times (\pibarpinv(v/t), \infty)\big] = r \right) \nonumber \\
&=P\left( ^{(s,-)}X_t < -y \, , \,\XX \big[[0,t]\times (\pibarpinv(v/t), \infty) \big] = 0 \right) \nonumber \\
& = P\left( {}^{(s,-)}X_t^{<\pibarpinv(v/t)} < -y \right).
\end{align*} 

Now for sufficiently small $t > 0$,
\begin{align*}
&P(\ ^{(r,s)}S_t \le -y) \nonumber \\
&\ge P\left(^{(r,s)}S_t \le -y,\,\,\XX \big[[0,t]\times (\pibarpinv(v/t), \infty) \big]= r   \right)\nonumber \\
&= P\left(^{(s,-)}S_t \le -y\, , \, \XX \big[[0,t]\times (\pibarpinv(v/t), \infty) \big]= 0   \right) \nonumber \\
&= P\left(^{(s,-)}S_t \le -y\, \big| \, \XX \big[[0,t]\times (\pibarpinv(v/t), \infty) \big]= 0   \right)e^{-v}.
\end{align*} Hence 
\begin{align*}
& \limsup_{t \dto 0}P\left(^{(s,-)}S_t \le -y \,\big| \, \XX \big[[0,t]\times (\pibarpinv(v/t), \infty)\big]  = 0  \right) \nonumber\\
 &\le e^{v}\limsup_{t\dto 0} P(^{(r,s)}S_t \le -y) \le e^v \, G(-y+1).
\end{align*} 
Choose $v \le \veps$ and $ y_0 = y_0(\veps, G)$ such that for all $y > y_0$, 
\[
e^{v}G(-y+1)\le \veps. 
\]
Hence the last line of \eqref{6e} is less than, for all sufficiently small $t > 0$, 
\begin{equation*}\label{6g}
\veps P\left( \Delta X_t^{(r+1)}\ge (x+y) b_t \right).
\end{equation*} Substitute the estimates for {\rm (I)} and {\rm (II)} back to \eqref{6a}, to get, for sufficiently small $t > 0$,
\begin{equation*}\label{6h_final}
(1-2\veps)^{-1}P(^{(r,s)}S_t \ge x) \ge P\Big(\Delta X_t^{(r+1)}\ge (x+y)b_t\Big).
\end{equation*} Hence we have shown \eqref{lem6}. 

\end{proof}

%
%
\bigskip
The upper bound of $P(^{(r,s)}S_t > x)$ is more complex. 
First let us introduce two more parameters $\eta $ and $\delta$ such that $ 0<\eta <1 $,  and define $\delta$ in terms of $\eta, t$ and  $x$ to be
\begin{equation}\label{del_def}
\delta  = \delta (\eta, t, x) = \frac{\eta}{24(r\vee s)}\left|\log(t\pibar(x b_t))\right|^{-1}.
\end{equation} Take $\log$s on both sides of \eqref{ass9}. Then for $x\ge x_{3}(\rho, D) \ge x_1 \ge 1$  and $0< t \le t_2(x)$, there exist constants $C_8$ and $C_9$ such that
\begin{equation*}\label{del_eq1}
C_8 \log x \le |\log(t\pibar(xb_t))| \le C_9 \log x.
\end{equation*}
Hence for some constants $0<C_{10}, C_{11}<\infty$ and for $x\ge  x_{3}(\rho, D) \ge 1$ and $t\le t_2(x)$,
\begin{equation}\label{ass10}
C_{10} \frac{\eta}{\log x} \le \delta (\eta, t, x) \le C_{11}\frac{\eta}{\log x}. 
\end{equation} 

%

\bigskip \noindent 
Recall the truncated first moment function $\nu(\cdot)$ in \eqref{vdef}.
Since $S_t$ is tight as $t \dto 0$, by Theorem 14.15 in \cite{kal02}, there exist constants $0<M_2<\infty$ and $t_3(M_2)\le t_2$ such that for all $t\le t_3$,
\begin{equation*}\label{cent1}
a_t - t\nu(b_t) \le M_2 b_t .
\end{equation*} Then for $x > x_4(\eta)\ge x_3$ such that $ x C_{10}\eta /\log x > 1$, we have $\delta x > 1$ by \eqref{ass10} and
\begin{align*}
t\big|\nu(b_t) - \nu(\delta x b_t) \big|  &= t\left|  \int_{b_t< |y|\le 1} y \Pi(\rmd y) -\int_{\delta x b_t< |y|\le 1} y \Pi(\rmd y)    \right| \nonumber \\
                                                                &= t\left|\int_{b_t<|y|\le \delta x b_t} y \Pi(\rmd y)  \right| \nonumber \\
                                                                & \le t \delta x b_t \pibar(b_t) < C_4\delta x b_t, \qquad \text{(from \eqref{bdd(b)})}.
\end{align*}
Then choose $x >  x_5(\eta, M_2) \ge x_4$ such that  $x > 4M_2/\eta$ and $\log x > 4 C_4 C_{11}$, and $t < t_3(M_2)$, so
\begin{align}\label{ctn_2}
\left| a_t-t\nu(\delta x b_t) \right| &\le \left| a_t-t\nu(b_t) \right|+t \left| \nu(b_t) - \nu(\delta x b_t)\right|   \nonumber \\
               &\le (M_2 + C_4 \delta x)b_t 
               \le \left(\frac 1 4 + \frac{C_4 C_{11}}{\log x}\right) \eta x b_t 
               \le \frac 1 2 \eta x b_t. 
\end{align}

\begin{lemma}\label{lemma7}
Suppose \eqref{convStb.asy} holds. Let $0<\eta<1$ and $\veps > 0$. Then for $\delta$ defined as in \eqref{del_def},  for each $x > x_6(\veps, \eta)\ge x_5$ there exists $t_4(\veps,\eta, x)\le t_3$ such that for all $t<t_4$ 
\begin{equation}\label{lem7}
P\left( ^{(r,s)}S_t > \eta x,\ \Delta X_t^{(r+1)} \le \delta x b_t \right)\le \veps \left(t\pibar(xb_t)\right)^{(r\vee s )+1},
\end{equation}
and
\begin{equation}\label{tri2_0}
P\left( ^{(r+1,s)}S_t > \eta x,\ \Delta X_t^{(r+2)} \le \delta x b_t  \right)
\le \veps (t\pibar(xb_t))^{((r+1)\vee s ) + 1} \le \veps (t\pibar(xb_t))^{r+2}.
\end{equation}
Suppose \eqref{convStb.mod} holds. Then under the same conditions, 
\begin{equation}\label{lem7_0b}
P\left( |^{(r)}\wt{S}_t| > \eta x,\ |\wt{ \Delta X}_t^{(r+1)}| \le \delta x b_t \right)\le \veps \left(t\pibar(xb_t)\right)^{(r\vee s)+1}.
\end{equation}
\end{lemma}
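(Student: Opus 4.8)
The plan is to prove \eqref{lem7} in full; \eqref{tri2_0} is then literally the same statement with $(r,s)$ replaced by $(r+1,s)$, and \eqref{lem7_0b} follows from the analogous argument for modulus trimming, sketched at the end. The idea, in the spirit of Kesten \cite{kesten93}, is to bound $^{(r,s)}X_t$ pathwise from above by a jump-truncated \LL process on the event in question, and then apply a Bernstein-type exponential tail bound whose strength is calibrated by the choice of $\delta$ in \eqref{del_def}.

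\emph{Pathwise sandwiching and centering.} Write $w:=\delta x b_t$ and recall the modulus-truncated \LL process $\wt X_t^w$ from \eqref{noay0b}, obtained from $X_t$ by deleting every jump of modulus $\ge w$; by \eqref{trip2}, and since $\sigma^2=0$ and $\Pi$ is diffuse, for $t$ small enough that $w\le 1$ the variable $\wt X_t^w$ has mean $t\nu(w)$, variance $tV(w)$, and jumps bounded in modulus by $w$. On the event $\{\Delta X_t^{(r+1)}\le w\}$ there are at most $r$ positive jumps of $X$ exceeding $w$ (no ties occur, by \eqref{cty.ass}), so the $r$ largest positive jumps removed in forming $^{(r,s)}X_t$ already account for every positive jump of size $\ge w$, whereas removing the $s$ largest negative jumps can fail to reproduce $\wt X_t^w$ only by at most $s$ negative jumps of modulus $<w$. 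Tallying the discarded jumps gives ${}^{(r,s)}X_t \le \wt X_t^w + s\,w$ on $\{\Delta X_t^{(r+1)}\le w\}$. Combining this with \eqref{ctn_2} (which gives $|a_t-t\nu(w)|\le\tfrac12\eta x b_t$) and \eqref{ass10} (which makes $s\delta x\le\tfrac14\eta x$ for $x$ large), the event in \eqref{lem7} is contained in
\[
\Big\{\,\wt X_t^w - t\nu(w) > \tfrac14\eta x\,b_t\,\Big\}.
\]

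\emph{Exponential estimate.} Since $\wt X_t^w-t\nu(w)$ has mean zero, jumps bounded by $w$, and variance $tV(w)$, the exponential-martingale (Bernstein) inequality gives, with $z:=\tfrac14\eta x b_t$,
\[
P\big(\wt X_t^w - t\nu(w) > z\big)\;\le\;\exp\!\Big(-\frac{z^2}{2\,\big(tV(w)+wz/3\big)}\Big).
\]
Now \eqref{ass2} gives $tV(w)$ of the same order as $w^2\,t\pibar(w)$, and \eqref{lem5} with $\lambda=1/\delta$ gives $\pibar(w)\le C_5\delta^{-D}\pibar(xb_t)$, so, writing $p:=t\pibar(xb_t)$ and using $\delta$ of order $\eta|\log p|^{-1}$ from \eqref{del_def}, one checks that $tV(w)$ is negligible against $wz$ as $p\dto 0$; hence the denominator is of order $wz$, and the exponent is at most a negative constant times $z^2/(wz)=\eta/(4\delta)$, that is, at most $-c\,(r\vee s)\,|\log p|$ for a large numerical constant $c$ produced by the factor $24(r\vee s)$ in \eqref{del_def}. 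Since $c(r\vee s)-(r\vee s)-1>0$ and, by \eqref{ass9}, $p\le C_7 x^{\rho-2}\dto 0$ as $x\to\infty$ uniformly over all small $t$, we obtain $P(\cdots)\le p^{\,c(r\vee s)}\le\veps\,p^{\,(r\vee s)+1}$ once $x$ is large, which is \eqref{lem7}. The same computation with $r$ replaced by $r+1$ yields \eqref{tri2_0}, the exponent $((r+1)\vee s)+1\le(r\vee s)+2$ still lying below $c(r\vee s)$.

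\emph{Modulus case and main difficulty.} For \eqref{lem7_0b}, on $\{|\wt{\Delta X}_t^{(r+1)}|\le w\}$ there are at most $r$ jumps of modulus $>w$, all of which are removed in forming $^{(r)}\wt X_t$, so $|{}^{(r)}\wt X_t-\wt X_t^w|\le rw$; the two-sided versions of the same centering bounds and of the Bernstein inequality (applied separately to each tail) then give the claim. The crux of the proof is the exponential estimate: one must verify that, with $\delta$ as in \eqref{del_def}, the Bernstein exponent genuinely dominates $\big((r\vee s)+1\big)\,|\log(t\pibar(xb_t))|$ uniformly over small $t$. This relies on carefully tracking the power $\delta^{-D}$ (equivalently, on the comparison of $tV(w)$ with $wz$) furnished by the quasi-regular-variation bounds \eqref{lem5}--\eqref{ass9}, and on the fact that $\delta^{-1}$ grows only logarithmically in $x$, so that the $|\log p|$ gained from the variance-to-jump-size ratio beats the $|\log p|$ demanded by the target power of $t\pibar(xb_t)$; the constant $24(r\vee s)$ in \eqref{del_def} is precisely what leaves room for the arbitrary $\veps$.
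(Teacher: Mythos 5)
Your proposal is correct and follows essentially the same route as the paper: the same pathwise sandwiching ${}^{(r,s)}X_t \le \wt X_t^{\delta x b_t} + s\delta x b_t$ on the event, the same recentering via \eqref{ctn_2}, and the same $\delta$-calibrated exponential tail bound yielding a power of $t\pibar(xb_t)$ strictly larger than $(r\vee s)+1$, with the surplus absorbed via \eqref{ass9} to produce the factor $\veps$. The only cosmetic difference is that you invoke a packaged Bernstein inequality for the compensated jump martingale, whereas the paper derives the Chernoff bound directly from the L\'evy--Khintchine formula with $\lambda = (\delta x b_t)^{-1}$; these are the same computation.
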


\begin{proof}[Proof of Lemma \ref{lemma7}:] 
Here we only prove \eqref{lem7}. \eqref{tri2_0} and \eqref{lem7_0b} can be proved similarly. For $t < t_3$ and $x> x_5(\eta, M_2)$ as in \eqref{ctn_2},
\begin{align}\label{7a}
     &P\left( ^{(r,s)}S_t > \eta x,\  \Delta X_t^{(r+1)} \le \delta x b_t \right) \nonumber \\
 = & P\left( ^{(r,s)}X_t-a_t > \eta xb_t,\  \Delta X_t^{(r+1)} \le \delta x b_t \right)  \nonumber \\
 \le &P\left( ^{(r,s)}X_t - t\nu(\delta x b_t) > \frac 1 2 \eta x b_t,\  \Delta X_t^{(r+1)} \le \delta x b_t\right).
\end{align} The third line comes from the estimate in \eqref{ctn_2}. 
Note that on $\{ \Delta X_t^{(r+1)} \le \delta x b_t\}$,  the truncated process with jumps having magnitude smaller than $\delta x b_t$ is bounded below by the trimmed process as follows. Recall the truncated processes defined in \eqref{noay0a} and \eqref{noay0b}, and write
\begin{align*}
{}^{(r,s)}X_t = {}^{(s-)}X_t - \sum_{i=1}^r \Delta X_t^{(i)} 
	 &\le X_t^{ < \delta x b_t} + \sum_{i=1}^s \Delta X_t^{(i)-} \nonumber \\		
	&\le \wt X_t^{\delta x b_t}+ \sum_{i=1}^s \Delta X_t^{(i)-}{\bf 1}\{ \Delta X_t^{(i)-} \le \delta x b_t\} 
	 \le \wt X_t^{\delta x b_t} + s\delta x b_t.
\end{align*} 
We can choose $x$ big enough that $(r\vee s)C_{11} < \log (x)$, and $(r \vee s) \delta < \frac 1 4 \eta$, and then the last line of \eqref{7a} is less than 
\begin{align}\label{7c}
P\left( \wt X_t^{\delta x b_t} -t\nu(\delta x b_t) \ge \frac 1 4 \eta x b_t    \right).
\end{align}

By compound Poisson approximation, we can write a L\'evy process $X_t$ as the sum of the compensated small jump process and the large jump process as follows, see e.g. Sato \cite{sato99}: for $h >0$ 
\begin{align}\label{7e}
\wt X_t^{h} -t\nu(h) 
&= \lim_{\epsilon \dto 0} \left( \sum_{0<s\le t}\Delta X_s {\bf 1}\{\epsilon < |\Delta X_s|\le h \}-t\int_{\epsilon<|y|\le h} y \Pi(\rmd y) \right)  \nonumber \\
&= : \lim_{\epsilon \dto 0} X_t^{(h)}(\epsilon).
\end{align} Hence setting $h = \delta x b_t$,  for each $\epsilon >0$ and any $\lambda > 0$, by Markov inequality, \eqref{7c} is less than 
\begin{align}\label{7f}
 P\left(X_t^{(h)}(\epsilon) \ge \frac 1 4 \eta x b_t\right)
=P\left(e^{\lambda X_t^{(h)}(\epsilon)} \ge e^{\frac \lambda 4 \eta x b_t}\right) 
\le E\left(e^{\lambda X_t^{(h)}(\epsilon)}\right)  e^{-\frac \lambda 4 \eta x b_t}.
\end{align} By L\'evy-Khinchine formula and also that $e^x-1-x \le e^x x^2/2$, $x>0$, we have
\begin{align}\label{7g}
E\left( e^{\lambda X_t^{(h)}(\epsilon)}\right) &= \exp\left[ t\int_{\epsilon\le |y|\le h}\left(e^{\lambda y} -1-\lambda y {\bf 1}\{ |y|\le 1 \}\right) \Pi(\rmd y) \right] \nonumber \\
&\le  \exp\left( t \int_{\epsilon \le |y|\le h} \frac{(\lambda y)^2}{2}e^{\lambda y} \Pi(\rmd y)\right)\nonumber \\
&\le  \exp\left( t  e^{\lambda h} \lambda^2 \int_{\veps \le |y|\le h} y^2  \Pi(\rmd y)\right) \nonumber \\
&\le  \exp\left( t  e^{\lambda h} \lambda^2 V(h)\right).
\end{align} 
Note for $x>x_4(\eta)$, $\delta x \ge 1$, there exists $t_4(\veps, \eta, x) \le t_3$ such that for $t \le t_4$, $h = \delta x b_t\le 1$. Note that the last line of \eqref{7g} is independent of $\veps$ and by \eqref{7e}, the $\lim_{\veps\dto 0} X_t^{(h)}(\veps)$ exists. 
Choose $\lambda = h^{-1}=(\delta x b_t)^{-1}$. Then by \eqref{7e}, \eqref{7f} and \eqref{7g}, the last line in \eqref{7a} is less than, using \eqref{ass2} and \eqref{ass9},
\begin{align}\label{7h}
\exp\left(  -\frac{\eta}{4\delta} + te\frac{V(\delta x b_t)}{\delta^2x^2b^2_t} \right) 
       \le \exp\left(  -\frac{\eta}{4\delta} + \frac{e}{C_1}t\pibar(\delta x b_t) \right)
        \le \exp\left(  -\frac{\eta}{4\delta} + \frac{C_7 e}{C_1}(\delta x)^{\rho-2}\right).
\end{align} By \eqref{ass10}, recalling that $\rho < 2$, and for $x>x_{4}(\eta)$ we have $\delta x > 1$, hence the upper bound 
\be\label{7ha}
 \frac{C_7 e}{C_1}(\delta x)^{\rho -2} \le \frac{C_7 e}{C_1} 
\le \frac{\log x}{8 C_{11}} \le \frac{\eta}{8\delta}.
\ee

Recalling the definition of $\delta(\eta, t, x)$ in \eqref{del_def}, and also in \eqref{ass9} that $t\pibar(xb_t) \le 1$ for $x\ge 1$ and $t <t_2(x)$, the RHS of \eqref{7h} is bounded by 
\begin{align}\label{7i}
\exp\left( -\frac{\eta}{8\delta} \right) 
&= \exp\left\{ -3(r \vee s) |\log(t\pibar(xb_t))| \right\} \nonumber \\
&= \left(t\pibar(xb_t)\right)^{3(r \vee s)}  \nonumber \\
&=(t\pibar(xb_t))^{(r \vee s)+1} \left(t\pibar(xb_t)\right)^{2 (r \vee s)-1}. 
\end{align} 		
We keep the first term and apply \eqref{ass9} to the second term. Then the last line of \eqref{7i} is less than
\begin{align*}
 & (t\pibar(xb_t))^{(r \vee s)+1}  \left( C_7x^{\rho-2}\right)^{2(r \vee s)-1}\le \veps(t\pibar(xb_t))^{(r \vee s)+1} 
\end{align*}where for $x > x_6(\veps, \eta)\ge x_5$, we have $C_7^{2(r \vee s)-1}x^{(\rho-2)(2(r\vee s)-1)}\le \veps $. 
This completes the proof of the Lemma. 

\end{proof}


\begin{lemma}\label{lemma8}
Let $\veps > 0$, $0<\eta < 1$. If \eqref{convStb.asy} holds, there exists $x_{7}(\veps, \eta) \ge x_6$ such that for each $x>x_{7}$ and all $t<t_4$,
\begin{align}\label{lem8}
P\left(^{(r,s)}S_t\ge x\right)\le P\left( \Delta X_t^{(r+1)} \ge x(1-\eta)b_t  \right) + \veps \left( t \pibar(x b_t)\right)^{r+1}.
\end{align}
If \eqref{convStb.mod} holds, under the same conditions, 
\begin{align*}
P\left({}^{(r)}\wt S_t \ge x \right)\le P\left( \wt{\Delta X}_t^{(r+1)} \ge x(1-\eta)b_t  \right) + \veps \left( t \pibar(x b_t)\right)^{r+1},
\end{align*} and 
\begin{align}\label{lem8_0b}
P\left(^{(r)}\wt S_t\le -x\right)\le P\left( \wt{\Delta X}_t^{(r+1)} \le -x(1-\eta)b_t  \right) + \veps \left( t \pibar(x b_t)\right)^{r+1}.
\end{align}
\end{lemma}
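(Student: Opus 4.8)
The plan is to decompose the event $\{{}^{(r,s)}S_t \ge x\}$ according to the size of the $(r+1)$st largest positive jump, splitting at the threshold $\delta x b_t$, where $\delta = \delta(\eta,t,x)$ is the quantity defined in \eqref{del_def}. On the event where $\Delta X_t^{(r+1)} > \delta x b_t$ we further split into: (i) the $(r+1)$st jump already being large, i.e.\ $\Delta X_t^{(r+1)} \ge x(1-\eta)b_t$, which contributes the first term on the right of \eqref{lem8}; and (ii) the "middle range" $\delta x b_t < \Delta X_t^{(r+1)} < x(1-\eta)b_t$, on which, for ${}^{(r,s)}S_t$ to exceed $x$, the $(r+1)$-trimmed process ${}^{(r+1,s)}S_t$ must itself exceed $x - \Delta X_t^{(r+1)}/b_t > \eta x$, which we will bound via Lemma \ref{lemma7}; and finally (iii) the low range $\Delta X_t^{(r+1)} \le \delta x b_t$, which is exactly the event controlled by \eqref{lem7} in Lemma \ref{lemma7}.

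First I would write
\begin{align*}
P\left({}^{(r,s)}S_t \ge x\right) &\le P\left(\Delta X_t^{(r+1)} \ge x(1-\eta)b_t\right) \\
&\quad + P\left({}^{(r,s)}S_t \ge x,\ \delta x b_t < \Delta X_t^{(r+1)} < x(1-\eta)b_t\right) \\
&\quad + P\left({}^{(r,s)}S_t \ge x,\ \Delta X_t^{(r+1)} \le \delta x b_t\right).
\end{align*}
The last term is bounded by $\veps' (t\pibar(xb_t))^{(r\vee s)+1} \le \veps'(t\pibar(xb_t))^{r+1}$ directly from \eqref{lem7} (using ${}^{(r,s)}S_t \ge x > \eta x$), after possibly renaming $\veps$. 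For the middle term, on that event we have ${}^{(r,s)}S_t = {}^{(r+1,s)}S_t + \Delta X_t^{(r+1)}/b_t$, so ${}^{(r+1,s)}S_t \ge x - x(1-\eta) = \eta x$; moreover on this event $\Delta X_t^{(r+2)} \le \Delta X_t^{(r+1)} < x(1-\eta)b_t$, but to invoke \eqref{tri2_0} I need $\Delta X_t^{(r+2)} \le \delta x b_t$, so I must split once more: either $\Delta X_t^{(r+2)} \le \delta x b_t$, handled by \eqref{tri2_0} giving $\veps(t\pibar(xb_t))^{r+2} \le \veps(t\pibar(xb_t))^{r+1}$ (since $t\pibar(xb_t)\le 1$ by \eqref{ass9}), or $\delta x b_t < \Delta X_t^{(r+2)} < x(1-\eta)b_t$, which forces at least two jumps in the range $(\delta x b_t, x(1-\eta)b_t)$. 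This last possibility has probability at most $\tfrac12(t\pibar(\delta x b_t))^2$ times a constant (a Poisson two-or-more estimate on the number of jumps exceeding $\delta x b_t$), and by Lemma \ref{bdds}(a) ($\pibar(\delta x b_t) \le C_5(1/\delta)^D \pibar(xb_t)$, i.e.\ $\pibar(\delta x b_t) \lesssim (\log x)^D \pibar(xb_t)$ using \eqref{ass10}) together with \eqref{ass9} the extra factor $(t\pibar(\delta x b_t))^2 (\log x)^{?}$ relative to $(t\pibar(xb_t))^{r+1}$ can be made $\le \veps$ for $x$ large; this is where I would use the polynomial-versus-logarithmic gap, exactly as in the proof of Lemma \ref{lemma7}.

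The main obstacle I anticipate is the bookkeeping in the "middle range" step: controlling the event that there are two or more jumps in the band $(\delta x b_t, x(1-\eta)b_t)$ while keeping the bound below $\veps(t\pibar(xb_t))^{r+1}$. The delicate point is that $\delta \to 0$ as $x\to\infty$, so $\pibar(\delta x b_t)$ is genuinely larger than $\pibar(xb_t)$; one must check that the gain of an extra power $(t\pibar)^2$ beats the loss coming from $(\pibar(\delta x b_t)/\pibar(xb_t))^2 \le C_5^2 \delta^{-2D}$, and then absorb the surviving $x$-power from \eqref{ass9} into the $\veps$ by taking $x = x_7(\veps,\eta)$ large. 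The modulus-trimming statements \eqref{lem8_0b} and its positive-tail analogue follow by the identical argument, using \eqref{lem7_0b} in place of \eqref{lem7} and the modulus order statistics $\wt{\Delta X}_t^{(k)}$ in place of the positive ones, so I would simply remark that the proof carries over verbatim.
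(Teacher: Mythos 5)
Your decomposition is exactly the paper's: split off $\{\Delta X_t^{(r+1)} \ge x(1-\eta)b_t\}$, handle the low range $\{\Delta X_t^{(r+1)} \le \delta x b_t\}$ by \eqref{lem7}, and in the middle range pass to ${}^{(r+1,s)}S_t \ge \eta x$ and split again on whether $\Delta X_t^{(r+2)} \le \delta x b_t$ (then \eqref{tri2_0}) or not. The one place you diverge is the only place where there is a genuine gap: your estimate for the remaining event. You bound the probability that $\Delta X_t^{(r+2)} > \delta x b_t$ by a ``two or more jumps in the band'' Poisson estimate, giving $\tfrac12\bigl(t\pibar(\delta x b_t)\bigr)^2$. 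That is too weak except when $r=0$. Indeed, using \eqref{lem5} and \eqref{ass10}, $\bigl(t\pibar(\delta x b_t)\bigr)^2 \le C_5^2\delta^{-2D}\bigl(t\pibar(xb_t)\bigr)^2$ with $\delta^{-2D}\asymp(\log x)^{2D}\to\infty$, so the ratio to the target $\veps\bigl(t\pibar(xb_t)\bigr)^{r+1}$ is
\begin{equation*}
\frac{C_5^2}{2\veps}\,\delta^{-2D}\,\bigl(t\pibar(xb_t)\bigr)^{1-r},
\end{equation*}
which by the lower bound $t\pibar(xb_t)\ge C_6x^{-D}$ in \eqref{ass9} tends to infinity for every $r\ge 1$ (logarithmically for $r=1$, polynomially for $r\ge2$). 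There is no ``polynomial-versus-logarithmic gap'' to exploit here because squaring leaves no spare power of $t\pibar(xb_t)$ once you have matched the required exponent $r+1$.

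The fix is to use the full strength of the event: $\Delta X_t^{(r+2)} > \delta x b_t$ means at least $r+2$ jumps exceed $\delta x b_t$, so by \eqref{rJ1}/\eqref{2max0},
\begin{equation*}
P\bigl(\Delta X_t^{(r+2)} > \delta x b_t\bigr)\le \frac{1}{(r+2)!}\bigl(t\pibar(\delta x b_t)\bigr)^{r+2}
\le \frac{\bigl(C_5\delta^{-D}\bigr)^{r+2}}{(r+2)!}\,t\pibar(xb_t)\cdot\bigl(t\pibar(xb_t)\bigr)^{r+1},
\end{equation*}
and now the single surplus factor $t\pibar(xb_t)\le C_7x^{\rho-2}$ kills the logarithmic loss $\delta^{-D(r+2)}\asymp(\log x)^{D(r+2)}$ as $x\to\infty$; this is precisely \eqref{8c} in the paper. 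With that replacement your argument goes through, and the modulus statements do indeed follow verbatim as you say.
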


\begin{proof}[Proof of Lemma \ref{lemma8}:]
As before, here we only prove \eqref{lem8} under the assumption \eqref{convStb.asy}.  Choose $0<\eta <1$, then there exists a constant $x_{7}(\eta)\ge x_6(\veps, \eta)$ such that, for all $x>x_{7}$ and $t < t_4$, $ \delta(\eta, t, x) < 1-\eta$ (recall \eqref{ass10}),
we can decompose the event in the LHS of \eqref{lem8} into the following:
\begin{align}\label{8a}
P\left(^{(r,s)}S_t\ge x\right)\le &P\left( \Delta X_t^{(r+1)} \ge x(1-\eta)b_t  \right) \nonumber \\
                                &+ P\left(^{(r,s)}S_t\ge x,\ \Delta X_t^{(r+1)} \le \delta x b_t  \right) \nonumber \\
                                &+ P\left(^{(r,s)}S_t\ge x,\  \delta x b_t<\Delta X_t^{(r+1)} < x(1-\eta)b_t   \right).
\end{align}
By Lemma \ref{lemma7}, given any $\veps > 0$, for $x>x_6(\veps,\eta)$, $t<t_4(\veps, \eta, x)$, the second term is less than  $\veps (t\pibar(xb_t))^{r+1} $. Recalling that $^{(r,s)}S_t =\ ^{(r+1,s)}S_t + \Delta X_t^{(r+1)}/b_t$, the third term is less than
\begin{align}\label{8b}
P\left(\Delta X_t^{(r+2)}>\delta x b_t\right) + P\left(^{(r+1,s)}S_t\ge \eta x,\  \Delta X_t^{(r+2)}\le \delta x b_t<\Delta X_t^{(r+1)} \right).
\end{align} By \eqref{tri2_0}, the second term in \eqref{8b} is less than $ \veps (t\pibar(xb_t))^{r+1}$. 
%

Hence for each $x>x_{7}$ and $t<t_4$, 
\begin{align}\label{8c}
P\left(\Delta X_t^{(r+2)}>\delta x b_t\right) 
&\le \frac{1}{(r+2)!} \left(t\pibar\left(\delta xb_t\right)\right)^{r+2} \le \frac{1}{(r+2)!} \left( C_5 \delta^{-D} t\pibar\left(xb_t\right)\right)^{r+2} \nonumber \\
      &\le  \frac{C_7}{(r+2)!} \left( C_5\delta^{-D}\right)^{r+2} x^{\rho-2} \left(t\pibar\left(xb_t\right)\right)^{r+1} \le \veps \left(t\pibar\left(xb_t\right)\right)^{r+1},
\end{align} 
by \eqref{2max0}, \eqref{lem5}, \eqref{ass9} and \eqref{bdd(c)} respectively. Here we used \eqref{ass10} to see that  $\delta^{-1}(\eta, t, x)\le C_{10} \log x/\eta$, thus $\delta^{-D(r+2)}x^{\rho-2} \to 0$ as $x \to \infty$.
 This completes the proof of \eqref{lem8}. 

\end{proof}

The next lemma gives an upper estimate for the lower tail of the trimmed process in a similar way as Lemma \ref{lemma7}.  Recall the definition of $\Delta X_t^{(s)-}$ in \eqref{1s}.
\begin{lemma}\label{lemma9}
Let $\veps >0$ and $0<\eta<1 $. If \eqref{convStb.asy} holds, for each $x > x_{7}$ and  $t \le t_4(\veps, \eta, x)$, we have 
\begin{equation}\label{lem9_0a}
		P\left( ^{(r,s )}S_t\le -\eta x ,\ \Delta X_t^{(s+1)-}\le \delta xb_t    \right)\le \veps \left(t\pibar(xb_t)\right)^{s+1}.
		\end{equation} 
Hence, we also have for each $x > x_7$ and $t \le t_4$,
\begin{equation}\label{lem9_0b}
       P(^{(r,s)}S_t \le -x)\le P\left( \Delta X_t^{(s+1)-}\ge x(1-\eta)b_t\right) + \veps \left(t\pibar(xb_t)\right)^{s+1}.
	   \end{equation}
\end{lemma}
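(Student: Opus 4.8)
The plan is to mirror the proof of Lemma~\ref{lemma7} exactly, but working with the negatively trimmed part of the process. First I would use the centering estimate \eqref{ctn_2} (which bounds $|a_t - t\nu(\delta x b_t)|$ by $\tfrac12 \eta x b_t$ for $x$ and $t$ in the appropriate ranges) to reduce $P({}^{(r,s)}S_t \le -\eta x,\ \Delta X_t^{(s+1)-} \le \delta x b_t)$ to $P\big({}^{(r,s)}X_t - t\nu(\delta x b_t) \le -\tfrac12 \eta x b_t,\ \Delta X_t^{(s+1)-} \le \delta x b_t\big)$. Then, on the event $\{\Delta X_t^{(s+1)-} \le \delta x b_t\}$, I would bound the trimmed process \emph{from above in modulus on the negative side} by a suitable truncated process: writing ${}^{(r,s)}X_t = {}^{(r)}X_t + \sum_{j=1}^s \Delta X_t^{(j)-}$ and noting $-{}^{(r,s)}X_t \le -X_t^{>-\delta x b_t} + \sum_{j=1}^r \Delta X_t^{(j)} \le -\wt X_t^{\delta x b_t} + r\delta x b_t$ (the analogue of the chain of inequalities in \eqref{7a}--\eqref{7c}, now applied to $-X_t$). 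Choosing $x$ large enough that $(r\vee s)\delta < \tfrac14\eta$ reduces the probability to $P\big(-(\wt X_t^{\delta x b_t} - t\nu(\delta x b_t)) \ge \tfrac14 \eta x b_t\big)$.

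From there the argument is identical to Lemma~\ref{lemma7} with $X_t$ replaced by $-X_t$: decompose $\wt X_t^{h} - t\nu(h)$ as a limit of compensated compound-Poisson pieces $X_t^{(h)}(\epsilon)$ as in \eqref{7e}, apply a Markov/exponential-moment bound to $P\big(-X_t^{(h)}(\epsilon) \ge \tfrac14 \eta x b_t\big)$ with $\lambda = h^{-1} = (\delta x b_t)^{-1}$, and use the Lévy--Khinchine computation \eqref{7g} (valid verbatim since $V(h)$ is the full truncated second moment, symmetric in sign) to get the bound $\exp\big(-\tfrac{\eta}{4\delta} + \tfrac{C_7 e}{C_1}(\delta x)^{\rho-2}\big)$. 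The estimates \eqref{7ha}, \eqref{7i} and the final step of Lemma~\ref{lemma7} then convert this into $(t\pibar(xb_t))^{(r\vee s)+1}(t\pibar(xb_t))^{2(r\vee s)-1} \le \veps (t\pibar(xb_t))^{s+1}$, where we use $(t\pibar(xb_t))^{(r\vee s)+1} \le (t\pibar(xb_t))^{s+1}$ since $t\pibar(xb_t) \le 1$ and $r\vee s \ge s$, and absorb the remaining powers of $x$ via \eqref{ass9} for $x > x_7(\veps,\eta)$. This gives \eqref{lem9_0a}.

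For \eqref{lem9_0b} I would decompose exactly as in \eqref{8a}--\eqref{8c} for Lemma~\ref{lemma8}, but on the negative side: split $P({}^{(r,s)}S_t \le -x)$ according to whether $\Delta X_t^{(s+1)-} \ge x(1-\eta)b_t$, or $\Delta X_t^{(s+1)-} \le \delta x b_t$ (controlled by \eqref{lem9_0a}), or $\delta x b_t < \Delta X_t^{(s+1)-} < x(1-\eta)b_t$. For the middle range I would use ${}^{(r,s)}S_t = {}^{(r,s+1)}S_t - \Delta X_t^{(s+1)-}/b_t$ and bound by $P(\Delta X_t^{(s+2)-} > \delta x b_t) + P({}^{(r,s+1)}S_t \le -\eta x,\ \Delta X_t^{(s+2)-} \le \delta x b_t < \Delta X_t^{(s+1)-})$; the first term is handled by \eqref{2max0-}, \eqref{lem5}, \eqref{ass9}, \eqref{bdd(c)} exactly as in \eqref{8c}, and the second by the $s$-analogue of \eqref{tri2_0}. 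Combining the pieces yields \eqref{lem9_0b}.

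The main obstacle I anticipate is purely bookkeeping: getting the chain of path inequalities that bounds $-{}^{(r,s)}X_t$ above by $-\wt X_t^{\delta x b_t} + r\delta x b_t$ genuinely correct on the event $\{\Delta X_t^{(s+1)-} \le \delta x b_t\}$ — one has to be careful that trimming the $r$ largest \emph{positive} jumps only helps (decreases the process, hence increases $-{}^{(r,s)}X_t$ by at most $\sum_{i=1}^r \Delta X_t^{(i)}$, which need \emph{not} be small) so one instead argues that $-{}^{(r,s)}X_t = -X_t^{>-\delta x b_t}$-type quantity plus controlled error, exploiting that the negative jumps removed and those retained are all $\le \delta x b_t$ in magnitude. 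Once that inequality is pinned down, everything else transfers mechanically from Lemmas~\ref{lemma7} and~\ref{lemma8} by the symmetry $x \mapsto -x$.
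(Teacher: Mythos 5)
Your proposal is correct and follows essentially the same route as the paper: the same centering reduction via \eqref{ctn_2}, the same pathwise chain ${}^{(r,s)}X_t \ge X_t^{>-\delta x b_t} - \sum_{i=1}^r \Delta X_t^{(i)} \ge \wt X_t^{\delta x b_t} - r\delta x b_t$ on $\{\Delta X_t^{(s+1)-}\le \delta x b_t\}$, the same exponential Markov bound applied to $-X_t^{(h)}(\epsilon)$, and the same three-way decomposition for \eqref{lem9_0b} (the paper handles the middle range by applying \eqref{lem9_0a} with $s$ replaced by $s+1$, which is precisely the negative-side analogue of \eqref{tri2_0} you invoke). The "bookkeeping obstacle" you flag is resolved exactly as you indicate — the positive jumps $\ge \delta x b_t$ removed in passing to $\wt X_t^{\delta x b_t}$ absorb all of $\sum_{i=1}^r \Delta X_t^{(i)}$ except the part below the threshold, which is at most $r\delta x b_t$ — so no gap remains.
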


\begin{proof}[Proof of Lemma \ref{lemma9}:]\
Let $\veps >0$ and $0<\eta<1$.
  Similar to the proof of Lemma \ref{lemma7}, the lefthand side of \eqref{lem9_0a} equals
\begin{equation}\label{9a}
P\left( ^{(r,s)}X_t -a_t\le -\eta x b_t, \Delta X_t^{(s+1)-}\le \delta xb_t    \right) 
\le P\left( ^{(r,s)}X_t -t\nu(\delta x b_t)\le -\frac 1 2\eta x b_t, \Delta X_t^{(s+1)-} \le \delta xb_t    \right).
\end{equation}
 On $\{ \Delta X_t^{(s+1)-}\le \delta xb_t \}$, recall the truncated processes in \eqref{noay0a} and \eqref{noay0b}, 
\begin{align*}
^{(r,s)}X_t = {}^{(r)}X_t + \sum_{i=1}^s \Delta X_t^{(i)-} 
 & \ge  X_t^{> - \delta x b_t} - \sum_{i=1}^r \Delta X_t^{(i)} \nonumber \\
 & \ge  \wt X_t^{\delta x b_t} - \sum_{i=1}^r \Delta X_t^{(i)}{\bf 1}\{ \Delta X_t^{(i)} < \delta x b_t\} \nonumber \\
 & \ge   \wt X_t^{\delta x b_t} -r\delta x b_t 
 \ge   \wt X_t^{\delta x b_t} -\frac 1 4 \eta x b_t.
\end{align*} 
From the argument above \eqref{7c}, we see that $ r \delta < \frac 1 4 \eta$ for $x > x_7$. This gives an upper estimate of \eqref{9a} as follows:
\begin{equation*}\label{9c}
P\left( ^{(r,s)}X_t -t\nu(\delta x b_t)\le -\frac 1 2\eta x b_t,\  \Delta X_t^{(s+1)-}\le \delta xb_t    \right) 
\le P\left(  \wt X_t^{\delta x b_t} - t\nu(\delta x b_t)\le -\frac 1 4\eta x b_t  \right).
\end{equation*}
Evaluate the expression in the same way as \eqref{7c} and \eqref{7e},  writing $ h = \delta x b_t$ to get, for any $\lambda >0$, $\epsilon > 0$,
\begin{align}\label{9d}
 P\left(X_t^{(h)}(\epsilon) \le -\frac 1 4 \eta x b_t\right)  = P\left(e^{-\lambda X_t^{(h)}(\epsilon)} \ge e^{\frac {\lambda}{4} \eta x b_t}\right)  \le  E\left(e^{-\lambda X_t^{(h)}(\epsilon)}\right)  e^{-\frac \lambda 4 \eta x b_t}.
\end{align} Similar to \eqref{7g}, noting that for each $x<0$, $e^{-x}-1+x\le e^{|x|}x^2/2$,
\begin{align}\label{9e}
E\left(e^{-\lambda X_t^{(h)}(\epsilon)}\right) &= \exp\left[t\int_{\epsilon\le |y|\le h}\left(e^{-\lambda y}-1+\lambda y {\bf 1}\{ |y|\le 1\}\right) \Pi(\rmd y)\right] \nonumber \\
                                                                  &\le \exp\left(t e^{|\lambda y|}\lambda^2/2 \int_{\epsilon\le |y| \le h} y^2 \Pi(\rmd y)\right).
\end{align} 
Then we can take $\epsilon \dto 0$ in \eqref{9e}.
By the same procedure as \eqref{7h} and \eqref{7ha}, for $x>x_7$, the last line of \eqref{9d} is no more than
\begin{align*}
&\exp\left( -\frac{\eta}{4\delta} + \frac{C_7}{2C_1} (\delta x)^{\rho-2}\right) \le \exp \left(-\frac{\eta}{4\delta}+\frac{\eta}{8\delta}\right) = \exp(-\frac{\eta}{8\delta}).
\end{align*} 
The rest of the proof of \eqref{lem9_0a} follows exactly like Lemma \ref{lemma7}. 

Next to prove \eqref{lem9_0b}, we proceed as in the proof of Lemma \ref{lemma8}. For $x > x_7$, the lefthand side of \eqref{lem9_0b} is smaller than
\begin{align}\label{9g}
P\left( \Delta X_t^{(s+1)-} \ge x(1-\eta)b_t  \right) 
                  &+ P\left(^{(r,s)}S_t\le -x,\ \Delta X_t^{(s+1)-} \le  \delta x b_t  \right) \nonumber \\
                                &+ P\left(^{(r,s)}S_t\le -x,\ \delta x b_t<\Delta X_t^{(s+1)-} < x(1-\eta)b_t   \right).
\end{align} 
Recall in \eqref{8a} we have for all $x>x_{7}$, $\delta < 1-\eta$.
By \eqref{lem9_0a}, the second term is less than $\veps \left(t\pibar(xb_t)\right)^{s+1}$.
The third term of \eqref{9g} is no more than 
\begin{align}\label{9h}
P\left(^{(r,s)}S_t\le -x,\  \Delta X_t^{(s+2)-}\le \delta x b_t<\Delta X_t^{(s+1)-} < x(1-\eta)b_t   \right) + P\left( \Delta X_t^{(s+2)-}>\delta x b_t \right).
\end{align} Recalling \eqref{tri2_0}, apply the same inequality in \eqref{lem9_0a} with $s$ replaced by $s+1$. Then the first term of \eqref{9h} is less than
\begin{equation}\label{9i}
P\left( ^{(r,s+1)}S_t\le -\eta x,\ \Delta X_t^{(s+2)-}\le \delta xb_t    \right) \le \veps P\left(t\pibar(xb_t)\right)^{s+2}.
\end{equation}  By the same argument as in \eqref{8c}, for any $x > x_7$ and $t <t_4$, the second term of \eqref{9h} is less than
\begin{align}\label{9j}
\frac{1}{(s+2)!}\left(t\pibar(\delta x b_t)\right)^{s+2}                                                                          \le \frac{C_5^{s+2}}{(s+2)!} \delta^{-(s+2)D} t\pibar(xb_t) \left(t\pibar(xb_t)\right)^{s+1} \nonumber \\                                                                          \le \frac{C_7C_5^{s+2}}{(s+2)!} \delta^{-(2+s)D}x^{\rho-2} \big( t\pibar(xb_t)\big)^{s+1} \le \veps \big(t\pibar(xb_t)\big)^{s+1}.
\end{align} Put \eqref{9h}, \eqref{9i} and \eqref{9j} together to complete the proof of \eqref{lem9_0b}.

\end{proof}


Summarizing the bounds derived in Lemma \ref{lemma6}, Lemma \ref{lemma8}, and Lemma \ref{lemma9}, we get our desired inequalities in the next lemma.
\begin{lemma}\label{lemma10}
Let $\veps > 0$, $0<\eta< 1$, and \eqref{convStb.asy} hold with $\mathcal{L}(^{(r,s )}S_t)\Rightarrow G$. Then there exists $ x_{8}(\veps, \eta, G) \ge x_7$ such that for all $x > x_{8}$,  
\begin{align}\label{lem10_0}
(1-\veps)\limsup_{t \dto 0} t\pibar^+(x(1+\eta)b_t) &\le \left\{(r+1)! (1-G(x))\right\}^{1/(r+1)} \nonumber \\
				&\le \liminf_{t\dto 0} \left( t\pibar^+(x(1-\eta)b_t)+\veps t\pibar(xb_t)\right).
\end{align} and 
\begin{align}\label{lem10_0b}
(1-\veps)\limsup_{t \dto 0} t\pibar^-(x(1+\eta)b_t) &\le \left\{(s+1)!G(-x)\right\}^{1/(s+1)}		\nonumber \\														&\le \liminf_{t\dto 0} \left( t\pibar^-(x(1-\eta)b_t)+\veps t\pibar(xb_t)\right).
\end{align} Under the same conditions, if \eqref{convStb.mod} holds with $\mathcal{L}({}^{(r)}\wt S_t) \Rightarrow\wt G$, then for $x >x_8(\veps, \eta, \wt G)$, 
\begin{align}\label{lem10_0c}
(1-\veps)\limsup_{t \dto 0} t\pibar(x (1+\eta) b_t) &\le \left\{  (r+1)!\left[ \wt G(-x)+ 1- \wt G(x))\right]  \right\}^{1/(r+1)} \nonumber \\
											& \qquad \qquad \qquad \le \liminf_{t\dto 0} \left( t\pibar(x(1-\eta)b_t)+\veps t\pibar(xb_t)\right).
\end{align}

\end{lemma}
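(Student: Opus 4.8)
The plan is to derive \eqref{lem10_0}, \eqref{lem10_0b} and \eqref{lem10_0c} simply by assembling the sandwiching inequalities of Lemmas~\ref{lemma6}, \ref{lemma8} and \ref{lemma9} with the elementary two-sided bounds on the tails of the ordered jumps (the lower and upper bounds in \eqref{2max0}, the analogous bound for $\Delta X_t^{(s+1),-}$ displayed immediately after \eqref{2max0}, and the modulus analogue recorded below), and then passing to the limit $t\dto 0$ via the Portmanteau theorem. I fix $x$ large; all the ``for sufficiently small $t$'' and ``for $x$ large'' restrictions in those lemmas (in particular $x>x_7$, $x>x_2$, $\eta x>y_0$ and $x(1\pm\eta)>x_1$) can be imposed simultaneously, so that taking $\limsup_{t\dto 0}$ or $\liminf_{t\dto 0}$ afterwards is legitimate, and by Lemma~\ref{A3} the resulting quantities are finite and positive. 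Throughout I use the Portmanteau bounds $\limsup_{t\dto 0}P({}^{(r,s)}S_t\ge x)\le P(Y\ge x)$ and $\liminf_{t\dto 0}P({}^{(r,s)}S_t\ge x)\ge P(Y>x)$, with $Y\sim G$, which reduce to the asserted expressions $1-G(x)$, $G(-x)$, etc., at continuity points of $G$; since only continuity points are needed in the sequel, I will not flag this further.

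For the left inequality in \eqref{lem10_0} I would apply \eqref{lem6} with $y=\eta x$, giving $P(\Delta X_t^{(r+1)}\ge x(1+\eta)b_t)\le(1+\veps)P({}^{(r,s)}S_t\ge x)$ for all small $t$, and bound the left side below by the lower bound in \eqref{2max0} used at level $x(1+\eta)$, so that $\tfrac{1-\veps}{(r+1)!}\bigl(t\pibar^+(x(1+\eta)b_t)\bigr)^{r+1}\le(1+\veps)P({}^{(r,s)}S_t\ge x)$. Taking $\limsup_{t\dto 0}$, bounding the right side by $(1+\veps)(1-G(x))$, extracting $(r+1)$st roots, and absorbing the factor $\bigl((1+\veps)/(1-\veps)\bigr)^{1/(r+1)}\to1$ into a relabelled $\veps$ gives the first inequality.

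For the right inequality in \eqref{lem10_0} I would start from \eqref{lem8}, bound its first term above by the upper bound in \eqref{2max0} (valid at every level), obtaining $P({}^{(r,s)}S_t\ge x)\le\tfrac{1}{(r+1)!}\bigl(t\pibar^+(x(1-\eta)b_t)\bigr)^{r+1}+\veps\bigl(t\pibar(xb_t)\bigr)^{r+1}$. Using $\liminf_{t\dto 0}P({}^{(r,s)}S_t\ge x)\ge 1-G(x)$ together with the elementary inequality $a^{r+1}+b^{r+1}\le(a+b)^{r+1}$ for $a,b\ge0$, applied with $a=t\pibar^+(x(1-\eta)b_t)$ and $b=\bigl((r+1)!\,\veps\bigr)^{1/(r+1)}t\pibar(xb_t)$, the displayed right side is $\le\tfrac{1}{(r+1)!}(a+b)^{r+1}$, whence $\bigl\{(r+1)!(1-G(x))\bigr\}^{1/(r+1)}\le\liminf_{t\dto 0}(a+b)$; a final relabelling of $\veps$ removes the constant $((r+1)!)^{1/(r+1)}$ and produces \eqref{lem10_0}. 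Choosing $x_8$ to be the maximum of $x_7$ and all the lower thresholds on $x$ invoked above yields $x_8=x_8(\veps,\eta,G)\ge x_7$.

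Inequality \eqref{lem10_0b} for the negative tail follows by the identical computation with \eqref{lem6_n}, \eqref{lem9_0b} and the two-sided bound for $\Delta X_t^{(s+1),-}$ replacing \eqref{lem6}, \eqref{lem8} and \eqref{2max0}, and with $\pibar^-$, $G(-x)$, $s+1$ in the roles of $\pibar^+$, $1-G(x)$, $r+1$. For the modulus bound \eqref{lem10_0c} I would split $\{|{}^{(r)}\wt S_t|\ge x\}$ into the disjoint events $\{{}^{(r)}\wt S_t\ge x\}$ and $\{{}^{(r)}\wt S_t\le -x\}$, use the modulus statements in Lemmas~\ref{lemma6} and~\ref{lemma8} together with \eqref{lem6_2b} and \eqref{lem8_0b}, and use the modulus analogue of \eqref{2max0}: under the diffuseness assumption \eqref{cty.ass} the number of jumps of modulus exceeding $y$ is exactly Poisson with mean $t\pibar(y)$, so $\tfrac{1-\veps}{(r+1)!}(t\pibar(y))^{r+1}\le P(|\wt{\Delta X}_t^{(r+1)}|\ge y)\le\tfrac{1}{(r+1)!}(t\pibar(y))^{r+1}$; then $P(|{}^{(r)}\wt S_t|\ge x)=P({}^{(r)}\wt S_t\ge x)+P({}^{(r)}\wt S_t\le -x)$ converges to $\wt G(-x)+1-\wt G(x)$ at continuity points, and the same root/relabelling steps give \eqref{lem10_0c}. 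The argument is essentially routine bookkeeping; the only places needing attention are keeping the error terms in the form $\veps(t\pibar(xb_t))^{(\cdot)+1}$ through the step of taking $(r+1)$st roots (handled by the superadditivity of $u\mapsto u^{r+1}$) and the restriction to continuity points in the Portmanteau passage — the substantive estimates all lie in Lemmas~\ref{lemma6}--\ref{lemma9}.
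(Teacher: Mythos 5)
Your proposal is correct and follows essentially the same route as the paper: combine Lemma \ref{lemma6} with the lower bound in \eqref{2max0} at level $x(1+\eta)$ for the left inequalities, combine Lemmas \ref{lemma8}--\ref{lemma9} with the upper bound in \eqref{2max0} at level $x(1-\eta)$ for the right inequalities, and pass to the limit by the Portmanteau theorem. You are in fact slightly more explicit than the paper about the superadditivity step $a^{r+1}+b^{r+1}\le(a+b)^{r+1}$ and the attendant relabelling of $\veps$ when extracting $(r+1)$st roots, and about the continuity-point caveat, both of which the paper glosses over (handling the liminf side via $x'\dto x$ and right-continuity of $G$).
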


\begin{proof}[Proof of Lemma \ref{lemma10}:] 
Fix $\veps >0$ and $0<\eta<1$.
In \eqref{2max0}, replace $x$ by $ x(1+\eta)$ and from Lemma \ref{lemma6}, for $x > x_8(\veps, \eta, G)$ such that for all $x >x_8$, $y = x\eta > y_0$, then put \eqref{2max0} and \eqref{lem6} together to get,
\begin{equation*}\label{10_a}
\frac{1-\veps}{(r+1)!}  \left(  t\pibar^+(x(1+\eta)b_t) \right)^{r+1} \le P\left(\Delta X_t^{(r+1)} \ge x(1+\eta)b_t\right)\le (1+\veps)P(^{(r,s)}S_t \ge x).
\end{equation*} Take $\limsup_{t\dto 0}$ on both sides.  By the portmanteau theorem, we get the lefthand inequality in \eqref{lem10_0}, from 
\begin{align}\label{10_a1}
\left(\frac{1-\veps}{1+\veps}\right)^{1/(r+1)} \limsup_{t\dto 0} t\pibar^+(x(1+\eta)b_t) 
&\le \left((r+1)!\limsup_{t \dto 0} P\big({}^{(r,s)}S_t \ge x \big)\right)^{1/(r+1)} \nonumber \\
&\le  \bigg((r+1)!(1-G(x))\bigg)^{1/(r+1)}.
\end{align} 

To get the righthand inequality in \eqref{lem10_0}, take $ x(1-\eta)>x_8$ in \eqref{2max0}. Then by Lemma \ref{lemma8} and \eqref{2max0}, $P(^{(r,s)}S_t \ge x)$ is less than
\begin{align*}
  P\left(\Delta X_t^{(r+1)} \ge x(1-\eta)b_t\right)+\veps \left(  t\pibar(xb_t) \right)^{r+1}
  \le \frac{\left(  t\pibar^+(x(1-\eta)b_t) \right)^{r+1}}{(r+1)!}  + \veps \left(  t\pibar(xb_t) \right)^{r+1}.
\end{align*} Take $\liminf_{t\dto 0}$ on both sides, to get
\begin{align*}
\liminf_{t\dto 0} \left(\frac{\left(t\pibar^+(x(1-\eta)b_t)\right)^{r+1}}{(r+1)!}+ \veps (t\pibar(xb_t))^{r+1}\right) &\ge \liminf_{t \dto 0} P({}^{(r,s)}S_t \ge x) \nonumber \\
&\ge  \liminf_{t \dto 0} P({}^{(r,s)}S_t > x') \quad \text{for some } x' > x , \nonumber \\
&\ge 1-G(x') \quad \text{(by portmanteau theorem)}\nonumber \\
&\to 1-G(x) \quad \text{as }x'\dto x  .
\end{align*} 
Hence we have for each $x > 0$,
\begin{equation}\label{10_b1}
\liminf_{t\dto 0} \left(t\pibar^+(x(1-\eta)b_t)+ \veps t\pibar(xb_t)\right) \ge \big( (r+1)!(1-G(x))  \big)^{1/(r+1)}.
\end{equation}
Combining \eqref{10_a1} and \eqref{10_b1}, \eqref{lem10_0} follows. 
\eqref{lem10_0b} and \eqref{lem10_0c} are proved similarly. 

\end{proof}

The next result is crucial in replacing a small $z > 0$ by $x b_t$ for appropriate $t(z,x)$ and $x$ so as to make use of the inequalities from Lemmas \ref{lemma6} to \ref{lemma10}.
\begin{lemma}\label{xbt}
Assume \eqref{convStb.asy} or \eqref{convStb.mod} holds with nondecreasing $b_t$.  For each $z > 0$ and $x > 0$, define 
\be\label{t-df}
t(z,x) := \inf \{ u > 0 : b_u \ge z/x \}.
\ee
Then $0<t(z,x) < \infty$, $t(z,x) \dto 0$ as $z/x \to 0$ and for each fixed $x > 0$,
\be\label{11_bt}
 \frac 1 z b_{t(z,x)} \to  \frac 1 x \qquad \text{as } z \dto 0.
\ee  
\end{lemma}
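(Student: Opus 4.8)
The plan is to use the monotonicity of $b$ together with Lemma~\ref{bt}. Write $t^*:=t(z,x)$. From \eqref{t-df} and the monotonicity of $b$, the point $t^*$ is exactly where $b$ crosses the level $z/x$: for every $u<t^*$ one has $b_u<z/x$ (otherwise $u$ would belong to the set in \eqref{t-df}, contradicting that $t^*$ is its infimum), while for every $u>t^*$ one has $b_u\ge z/x$ (choose an element of that set strictly below $u$ and use monotonicity). Hence, writing $b_{t^*-}\le b_{t^*+}$ for the one-sided limits of $b$ at $t^*$ (which exist by monotonicity), both $z/x$ and $b_{t^*}$ lie in $[b_{t^*-},b_{t^*+}]$. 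The problem therefore reduces to showing that the relative jump $b_{t^*+}/b_{t^*-}$ tends to $1$ as $z/x\to0$, since then
\[
\Bigl(\tfrac{b_{t^*+}}{b_{t^*-}}\Bigr)^{-1}\le \frac{b_{t^*}}{z/x}\le \frac{b_{t^*+}}{b_{t^*-}}
\]
forces $\tfrac1z b_{t(z,x)}\to\tfrac1x$.

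First I would dispose of the elementary assertions. For all sufficiently small $z/x$ we have $0<t^*<\infty$: finiteness because $b_1>0$ puts $1$ into the set of \eqref{t-df} as soon as $z/x<b_1$, and positivity because $b_u\to 0$ as $u\dto0$, so $b_u<z/x$ on some interval $(0,\delta)$, which is then disjoint from that set. For the convergence $t^*\dto0$ as $z/x\to0$: if $t^*\ge\delta>0$ along some sequence with $z/x\to0$, then the crossing property applied at $u=\delta/2$ forces $b_{\delta/2}<z/x\to0$, contradicting $b_{\delta/2}>0$.

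Now fix $x$ and let $z\dto0$, so $z/x\to0$ and, for $z$ small, $t^*\in(0,1]$ with $t^*\dto0$. There is then a unique integer $n^*=n^*(z)\ge1$ with $t^*\in(1/(n^*+1),1/n^*]$, and $t^*\dto0$ forces $n^*\to\infty$. By monotonicity, $b_{t^*-}\ge b(1/(n^*+1))$, and (once $n^*\ge2$) $b_{t^*+}\le b(1/(n^*-1))$, so
\[
1\le\frac{b_{t^*+}}{b_{t^*-}}\le\frac{b(1/(n^*-1))}{b(1/(n^*+1))}=\frac{b(1/(n^*-1))}{b(1/n^*)}\cdot\frac{b(1/n^*)}{b(1/(n^*+1))}\longrightarrow 1
\]
as $n^*\to\infty$, each factor tending to $1$ by \eqref{bt-1} of Lemma~\ref{bt}. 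Combined with the squeeze above, this proves \eqref{11_bt}.

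The only genuine obstacle is that $b$ need not be continuous, so $b_{t(z,x)}$ need not equal $z/x$ exactly; the sandwich between $b_{t^*-}$ and $b_{t^*+}$ together with Lemma~\ref{bt} is exactly what absorbs this. The one further point requiring a routine check is that the subsequential statement \eqref{bt-1}, phrased along the fixed sequence $1/n$, may be applied along the $z$-dependent indices $n^*(z)$ — which is immediate since $n^*\to\infty$ and \eqref{bt-1} holds as a genuine limit in $n$.
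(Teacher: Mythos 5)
Your proof is correct and follows essentially the same route as the paper's: sandwich $z/x$ and $b_{t(z,x)}$ between values of $b$ at consecutive reciprocals $1/(n+1)$, $1/n$ (you use $1/(n^*-1)$ for the upper end) for the integer with $t(z,x)\in(1/(n+1),1/n]$, then invoke Lemma \ref{bt}. Your treatment of a possible discontinuity of $b$ at $t(z,x)$ via the one-sided limits is in fact slightly more careful than the paper's chain $b_{t(z,x)-}\le z/x\le b_{t(z,x)}$.
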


\begin{proof}[Proof of Lemma \ref{xbt}:]
\eqref{convStb.asy} or \eqref{convStb.mod} implies $b_t \dto 0$ as $t \dto 0$, so $0<t(z,x) < \infty$ and clearly $t(z,x) \dto 0$ as $z/x \dto 0$.
When $b_t$ is a continuous function, we have, for each $z,x >0$, $x b_{t(z,x)} = z$. Suppose $b_t$ is not continuous. We can find $n \ge 1$ such that, for all $z,x$ such that $z/x$ is small enough so that $t(z,x) \le 1$, we have
\[
 \frac{1}{n+1} < t(z,x) \le \frac 1 n.
\] Then since $b_t$ is assumed to be nondecreasing, we have
\be\label{11_bt2} b\left(1/( n + 1) \right) \le  b_{t(z,x)-} \le \frac{z}{x} \le b_{t(z,x)}  \le b\left(1 /n \right) .
\ee 
Fix $x > 1$. Then let $z \dto 0$ so that $z/x \to 0$, which implies $t(z,x) \dto 0$. Thus $n \to \infty$. Recall from Lemma \ref{bt} that \eqref{convStb.asy} or \eqref{convStb.mod} implies $b(1/(n+1))\sim b(1/n)$. Apply this fact to \eqref{11_bt2} to get \eqref{11_bt}. 

\end{proof}


\begin{lemma}\label{lemma11}
Assume \eqref{convStb.asy} or \eqref{convStb.mod} holds with nondecreasing $b_t$. Then there exists an $\alpha \ge 0$ such that, for all $y > 0$, 
\begin{equation}\label{lem11}
\lim_{z \to 0}\frac{\pibar(z)}{\pibar(zy)} = y^\alpha. 
\end{equation}
\end{lemma}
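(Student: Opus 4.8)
The plan is to reduce \eqref{lem11} to the existence, for all sufficiently large $x$, of the limit $L(x):=\lim_{t\dto0}t\pibar(xb_t)$, and then to extract that limit from the two‑sided tail estimates of Lemma~\ref{lemma10} together with the scale transfer in Lemma~\ref{xbt}. For the reduction, suppose $L(x)$ exists for all $x\ge x_*$ (it is then automatically finite and strictly positive by Lemma~\ref{A3}). For $x,y\ge x_*$ one has $\pibar(xb_t)/\pibar(yb_t)=t\pibar(xb_t)/t\pibar(yb_t)\to L(x)/L(y)$ as $t\dto0$. Using Lemma~\ref{bt} --- and the very existence of $L$, which controls the oscillation of $\pibar$ between consecutive levels $b_{1/n}$, $b_{1/(n+1)}$ --- the levels $yb_t$ ($t\dto0$) form asymptotically a multiplicative net of a right neighbourhood of $0$, so the above upgrades to $\lim_{z\dto0}\pibar(wz)/\pibar(z)=:\ell(w)$ existing for every $w>0$, with $\ell(x/y)=L(x)/L(y)$ for all $x,y\ge x_*$ (in particular the value depends only on $w$, not on the representation). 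The function $\ell$ is multiplicative, $\ell(w_1w_2)=\ell(w_1)\ell(w_2)$, and nonincreasing with $\ell(w)\ge1$ for $0<w\le1$, hence $\ell(w)=w^{-\alpha}$ for some $\alpha\ge0$; replacing $z$ by $zy$ and $w$ by $y^{-1}$ gives \eqref{lem11}.

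To produce $L(x)$ I would combine Lemma~\ref{lemma10} with Lemma~\ref{xbt}. Write $H(x)=\limsup_{t\dto0}t\pibar(xb_t)$, $h(x)=\liminf_{t\dto0}t\pibar(xb_t)$ (both finite and positive by Lemma~\ref{A3}), and let $\phi$ be the nonincreasing function of $x$ occupying the middle of Lemma~\ref{lemma10}: under \eqref{convStb.mod} this is $\{(r+1)!(\wt G(-x)+1-\wt G(x))\}^{1/(r+1)}$, while under \eqref{convStb.asy} one argues separately with the functions attached to $\pibar^+$ and $\pibar^-$ in \eqref{lem10_0}--\eqref{lem10_0b}, recombining via $\pibar=\pibar^++\pibar^-$ and the fact that at least one of $\pibar^\pm$ is comparable to $\pibar$ by \eqref{ass2}. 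Lemma~\ref{lemma10} then gives, for every $\veps\in(0,1)$, $\eta\in(0,1)$ and $x>x_8(\veps,\eta)$,
\[
(1-\veps)\,H((1+\eta)x)\ \le\ \phi(x)\ \le\ (1+\veps)\,h((1-\eta)x),
\]
and the polynomial control of Lemma~\ref{bdds} lets one pass between nearby arguments, so $H$ and $h$ are trapped in one multiplicative neighbourhood of $\phi$ on $\{x>x_8(\veps,\eta)\}$. Letting $\veps\dto0$ and then $\eta\dto0$ closes the band and forces $H=h$ at every continuity point of $\phi$, hence, by monotonicity of $H,h$ and density of such points, for all large $x$; this yields $L(x)=\phi(x)$ for large $x$, and Lemma~\ref{xbt} together with Lemma~\ref{bt} carries the conclusion down to all small $z$ as in the reduction.

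The hard part is the last step. The thresholds $x_8(\veps,\eta)$ in Lemma~\ref{lemma10} diverge as $\veps\dto0$ or $\eta\dto0$ --- they are built from a quantile bound $y_0(\veps)$ of the limit law and $y_0(\veps)\to\infty$ as $\veps\dto0$ --- so one cannot send $\veps,\eta$ to $0$ with $x$ fixed, and the band‑closing must be carried out only on a moving large range of $x$. This is acceptable because \eqref{lem11} is asymptotic and Lemma~\ref{xbt} relays information from the levels $xb_t$ with $x$ large to every small $z$; but making it rigorous I expect to require identifying each subsequential limit of the functions $x\mapsto t\pibar(xb_t)$ (extracted diagonally along $t\dto0$) as the L\'evy tail of an infinitely divisible subsequential limit of $S_t$ --- available since $S_t$ is stochastically compact and, the Gaussian and degenerate cases having been excluded, its limits carry no Gaussian component --- whose $(r,s)$‑trimmed, respectively modulus‑trimmed, version has the fixed law $G$, respectively $\wt G$, by the $J_1$‑continuity of the trimming map (Remark~\ref{trimC}); Lemma~\ref{lemma10} applied in this limiting setting then pins that tail down, so all subsequential limits coincide and $H\equiv h$. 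Disentangling $\pibar^+$ and $\pibar^-$ in the asymmetric case (only their sum being directly controlled through \eqref{ass2}) and handling the moving thresholds are where the genuine care is needed.
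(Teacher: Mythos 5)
Your proposed route hinges on first establishing that $L(x)=\lim_{t\dto0}t\pibar(xb_t)$ exists for all large $x$, and you correctly identify this as the hard part --- but you do not actually prove it, and the workaround you sketch does not close the gap. The sandwich from Lemma~\ref{lemma10} reads $(1-\veps)H((1+\eta)x)\le\phi(x)\le(1+\veps)h((1-\eta)x)$ only for $x>x_8(\veps,\eta)$, with $x_8\to\infty$ as $\veps,\eta\dto0$; so for a \emph{fixed} $x$ the band can never be closed, exactly as you observe, and moreover the bound couples $H$ at $(1+\eta)x$ with $h$ at the different point $(1-\eta)x$. Your fallback --- identify every subsequential limit of $S_t$ as an infinitely divisible law whose $(r,s)$-trimmed version has the fixed law $G$, and then ``pin down'' its L\'evy tail from Lemma~\ref{lemma10} --- requires an injectivity statement (that the law of ${}^{(r,s)}Y_1$ determines the L\'evy measure of $Y$) which is nowhere established, is not obviously true, and is essentially as strong as the theorem being proved. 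So the central step of your argument is missing.

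The paper avoids ever proving that $\lim_{t\dto0}t\pibar(xb_t)$ exists. It adds \eqref{lem10_0} and \eqref{lem10_0b} to get a two-sided bound on $t\pibar(\cdot)$ in terms of $\Lambda(x):=\{(s+1)!G(-x)\}^{1/(s+1)}+\{(r+1)!(1-G(x))\}^{1/(r+1)}$, then uses Lemma~\ref{xbt} with $t=t(z,x)$ to sandwich the \emph{ratio} $\pibar(z)/\pibar(zy)$ between constants times $\Lambda(x/(1+2\eta))/\Lambda(xy/(1-2\eta))$ and $\Lambda(x/(1-2\eta))/\Lambda(xy/(1+2\eta))$. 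Because only ratios of $\Lambda$ enter, the unknown normalisation cancels and one may send $x=x_n\to\infty$ \emph{first} (Helly selection gives $\Lambda(x_n)/\Lambda(x_ny)\to\Theta(y)$ at continuity points of a monotone $\Theta$) and only afterwards let $\eta,\veps\dto0$; in that order of limits the moving threshold $x_8(\veps,\eta)$ is harmless. Feller's rigidity lemma then forces $\Theta(y)=y^\alpha$. Note also that in the asymmetric case the paper simply adds the $\pibar^+$ and $\pibar^-$ bounds (limsup of a sum is at most the sum of limsups, and dually for liminf); no comparability argument between $\pibar^\pm$ and $\pibar$ is needed. To repair your write-up, replace the attempt to prove $H\equiv h$ by this ratio/Helly/Feller argument.
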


\begin{proof}[Proof of Lemma \ref{lemma11}:] 
First assume \eqref{convStb.asy} holds. Fix $ 0 < \veps, \eta < 1/2$ and $y >0$. Choose $x_{9}(\veps, \eta, y) \ge x_8$ such that  for all $x>x_{9}$, both $x$ and $ xy \ge x_{8}(\veps, \eta, G)(1+2\eta) $. Hence Lemma \ref{lemma10} applies to both $x$ and $xy$. 
Abbreviate
\begin{equation}\label{11_b_1}
 \Lambda(x):=  \left\{ (s+1)!G(-x)\right\}^{1/(s+1)}+ \left\{(r+1)!\left[ 1-G(x)\right]  \right\}^{1/(r+1)} .
\end{equation}
Note that for any real sequences $(a_n)$ and $(b_n)$, 
\[ \liminf_n a_n + \liminf_n b_n \le \liminf_{n}(a_n + b_n) \le   \limsup_{n}(a_n + b_n) \le \limsup_n a_n + \limsup_n b_n .
\]
Add \eqref{lem10_0} and \eqref{lem10_0b} in Lemma \ref{lemma10} to get, for each $x > x_{9}$, 
\begin{align}\label{lem10_00b}
(1-\veps)\limsup_{t \dto 0} t\pibar(x (1+\eta)b_t)
&\le  \Lambda(x) \le \liminf_{t\dto 0} \left( t\pibar(x(1-\eta)b_t)+2\veps t\pibar(xb_t)\right).
\end{align}
Take $x > (1+2\eta)x_{9}$, by \eqref{lem10_00b} we can choose $t_{5}(\veps, \eta, x, y)< t_4$ so small that whenever $t \le t_{5}$,
\be\label{11_a:u1}
t\pibar\left(\frac{x}{1+2\eta}(1+\eta)b_t \right) \le \frac{1+\veps}{ 1-\veps}\Lambda\left(\frac{x}{1+2\eta}\right),
\ee and 
\begin{align*}
 t\pibar\left(xyb_t\frac{1-\eta}{1-2\eta} \right)
 &\ge (1+2\veps)^{-1} \left[  t\pibar\left(xyb_t\frac{1-\eta}{1-2\eta}\right) + 2\veps t\pibar\left(xyb_t\frac{1-\eta}{1-2\eta}\right)  \right] \nonumber \\
                &\ge (1+2\veps)^{-1}\left[  t\pibar\left(xyb_t\frac{1-\eta}{1-2\eta}\right) + 2\veps t\pibar\left(xyb_t\frac 1 {1-2\eta}\right)  \right]
                \ge \frac{1-\veps}{1+2\veps} \Lambda\left( \frac{xy}{1-2\eta} \right).
\end{align*}
Take $z > 0$ and define $t(z,x)$ by \eqref{t-df}. Then by Lemma \ref{xbt} there is a $z_0$ sufficiently small that $t(z,x) \le t_{5}(\veps,\eta, x, y)$ and
\ben\label{ll_a:u2}
\frac{1+\eta}{1+2\eta} x b_{t(z,x)} \le z \le \frac{1-\eta}{1-2\eta} x b_{t(z,x)}\quad \text{whenever } z < z_0. 
\een
Then with $t = t(z,x)$, we have for $z < z_0$,
\ben\label{11_a:u3}
t\pibar(z)\le t\pibar\left(\frac{x}{1+2\eta}(1+\eta)b_t \right) \le \frac{1+\veps}{ 1-\veps}\Lambda\left(\frac{x}{1+2\eta}\right),
\een and for each $y>0$,
\be\label{11_a:u4}
t\pibar(zy)\ge t\pibar\left(\frac{xy}{1-2\eta}(1-\eta)b_t \right) \ge \frac{1-\veps}{ 1+2\veps}\Lambda\left(\frac{xy}{1-2\eta}\right).
\ee
Letting $z \dto 0$, we see that
\begin{align}\label{11_c}
\limsup_{z \dto 0} \frac{\pibar(z)}{\pibar(zy)} \le \left(\frac{1+2\veps}{1-\veps} \right)^2 \frac{\Lambda(x/(1+2\eta))}{\Lambda(xy/(1-2\eta))}.
\end{align}

In a similar way as \eqref{11_a:u1}--\eqref{11_a:u4}, we derive estimates in the other direction
and obtain the lower bound
\begin{equation}\label{11_e}
\liminf_{z \dto 0}\frac{\pibar(z)}{\pibar(zy)} \ge  \left( \frac{1-\veps}{1+2\veps} \right)^2 \frac{\Lambda(x/(1-2\eta))}{\Lambda(xy/(1+2\eta))}.
\end{equation}
Write $f_x(y) = \Lambda(x)/\Lambda(xy)$.
Since $f_x(y)$ is a nondecreasing function in $y$ for each $x$, by Helly's selection principle, there exists a sequence $\{x_n\} \to \infty$ such that for some monotone function $\Theta(\cdot)$, we have
\begin{equation}\label{R3}
\lim_{n \to \infty} \frac{\Lambda(x_n)}{\Lambda(x_n y)} = \Theta(y) \quad \text{at each continuity point $y>0$ of 
$\Theta(\cdot)$}.
\end{equation}
 Let $x=x_n(1+2\eta)$ in \eqref{11_c}, we have
\begin{equation}\label{11_f}
\limsup_{z \dto 0} \frac{\pibar(z)}{\pibar(zy)}\le \left(\frac{1+2\veps}{1-\veps}\right)^2 \frac{\Lambda(x_n)}{\Lambda(x_ny (1+2\eta)/(1-2\eta))}.
\end{equation} Let $n \to \infty$ on the RHS of \eqref{11_f} to get for $y(1+2\eta)/(1-2\eta)$ a continuity point of $\Theta(\cdot)$, 
\begin{equation*}\label{11_g}
\limsup_{z\dto 0} \frac{\pibar(z)}{\pibar(zy)}\le \left(\frac{1+2\veps}{1-\veps}\right)^2\Theta\left(y \frac{1+2\eta}{1-2\eta}\right).
\end{equation*}
If $y$ is a continuity point of $\Theta(\cdot)$, we can choose $\eta \to 0$ in a way that $y (1+2\eta)/(1-2\eta)$ is also a continuity point of $\Theta(\cdot)$. Then 
\[\limsup_{z \dto 0} \frac{\pibar(z)}{\pibar(zy)}\le \left( \frac{1+2\veps}{1-\veps}\right)^2 \Theta(y ). 
\] 
Next let $\veps \to 0$ to get 
\[ \limsup_{z \dto 0} \frac{\pibar(z)}{\pibar(zy)}\le \Theta\left(y\right).
\] Similarly choose $x = x_n(1-2\eta)$ in \eqref{11_e}, and let $n \to \infty$ and $\eta, \veps \dto 0$ in the same way to get 
\[ \liminf_{z\dto 0} \frac{\pibar(z)}{\pibar(zy)}\ge \Theta\left(y\right).
\] This shows that 
\[ \lim_{z \to 0} \frac{\pibar(z)}{\pibar(zy)} = \Theta(y) \quad \text{at each continuity point $y>0$ of $\Theta(\cdot)$}.
\] Next appeal to Feller (\cite{feller66}, Lemma VIII 8.1, p.268) to see that necessarily, $\Theta(y) = y^\alpha$ for some $\alpha \ge 0$. 
This completes the proof of \eqref{lem11} from \eqref{convStb.asy}.  

Next assume \eqref{convStb.mod} holds. We replace the definition in \eqref{11_b_1} by 
\[\wt \Lambda'(x) : = \left((r+1)!(\wt G(x) + \wt G(-x))\right)^{1/(r+1)}
\]
 and replace \eqref{lem10_00b} by \eqref{lem10_0c}. The rest of the proof remains the same. This completes the proof of Lemma \ref{lemma11}.

\end{proof}

\begin{lemma}\label{alpha}
In Lemma \ref{lemma11}, we can restrict $\alpha$ to $0<\alpha < 2$.
\end{lemma}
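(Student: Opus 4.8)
The plan is to feed the conclusion of Lemma~\ref{lemma11} — that $\pibar\in RV(-\alpha)$ at $0$ for some $\alpha\ge 0$ — into the two-sided bound \eqref{ass2b}, using Karamata's theorem together with the identity $U(z)=2\int_0^z y\pibar(y)\,\rmd y$ recorded in the proof of Lemma~\ref{bdds}(b). Recall that $U$ is nondecreasing and, since $\sigma^2=0$ and $\Pi$ is a L\'evy measure, $U(1)=V(1)+\pibar(1)<\infty$, so $U(z)<\infty$ for every $z>0$; also $\pibar(z)>0$ for all small $z$ because $\pibar(0+)=\infty$. It then suffices to exclude $\alpha\ge 2$ and $\alpha=0$.

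The heart of the argument is to identify $\lim_{z\dto 0}z^2\pibar(z)/U(z)$. If $\alpha>2$ then $y\pibar(y)\in RV(1-\alpha)$ at $0$ with $1-\alpha<-1$ is not integrable at $0$, so $U(\epsilon)=2\int_0^\epsilon y\pibar(y)\,\rmd y=\infty$ for every $\epsilon>0$, contradicting $U(\epsilon)\le U(1)<\infty$. For $0\le\alpha\le 2$, substitute $y=zu$ to write
\[
\frac{U(z)}{z^2\pibar(z)}=2\int_0^1 u\,\frac{\pibar(zu)}{\pibar(z)}\,\rmd u,
\]
where by Lemma~\ref{lemma11} the integrand converges pointwise to $u^{1-\alpha}$ as $z\dto 0$. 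When $\alpha<2$ the ratio $u\,\pibar(zu)/\pibar(z)$ is, by Potter's bounds, dominated for all small $z$ by a constant multiple of $u^{1-\alpha-\delta}$ with $0<\delta<2-\alpha$, which is integrable over $(0,1)$; dominated convergence — equivalently, Karamata's theorem applied to $\int_0^z y\pibar(y)\,\rmd y$ (see \cite{feller66}, Ch.~VIII) — then gives $U(z)/(z^2\pibar(z))\to 2\int_0^1 u^{1-\alpha}\,\rmd u=2/(2-\alpha)$, i.e.\ $z^2\pibar(z)/U(z)\to(2-\alpha)/2$. When $\alpha=2$ the limiting integrand is $u^{-1}$, and Fatou's lemma already yields $\liminf_{z\dto 0}U(z)/(z^2\pibar(z))\ge 2\int_0^1 u^{-1}\,\rmd u=\infty$, so $z^2\pibar(z)/U(z)\to 0$.

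Now compare with \eqref{ass2b}, which forces $z^2\pibar(z)/U(z)$ to stay in the compact subinterval $[\,C_1/(1+C_1),\,C_2/(1+C_2)\,]$ of the open interval $(0,1)$ for all small $z$. The case $\alpha=2$ gives the limit $0$, violating the lower bound, so $\alpha\neq 2$; together with the $\alpha>2$ case already excluded this gives $\alpha<2$. The preceding step then gives $z^2\pibar(z)/U(z)\to(2-\alpha)/2$, and the upper bound $\le C_2/(1+C_2)<1$ forces $(2-\alpha)/2<1$, that is $\alpha>0$. Hence $0<\alpha<2$, proving the lemma.

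The only point requiring a little care is the passage to the limit under the integral sign when $\alpha<2$, for which one needs uniform control of $\pibar(zu)/\pibar(z)$ over $u\in(0,1]$; this is exactly Potter's bounds / the uniform convergence theorem for regularly varying functions, and it can be avoided by quoting Karamata's theorem directly. (Alternatively, $\alpha=0$ can be ruled out in the spirit of the earlier lemmas: under $\pibar\in RV(0)$ one has $\pibar(xcb_t)/\pibar(b_t)\to 1$ as $t\dto 0$ for each fixed $x,c>0$, so \eqref{lem10_00b} yields $(1-\veps)\limsup_{t\dto 0}t\pibar(b_t)\le\Lambda(x)$ for all large $x$, and letting $x\to\infty$ makes $\Lambda(x)\to 0$, contradicting $\limsup_{t\dto 0}t\pibar(b_t)\ge C_3>0$ from \eqref{bdd(b)}; the modulus case is identical with \eqref{lem10_0c}.) Note that the substantive hypotheses used here — $\sigma^2=0$ and the two-sided estimate \eqref{ass2b} — were the payoff of the subsection eliminating normal and degenerate limits, so all that remains at this stage is bookkeeping with regular variation.
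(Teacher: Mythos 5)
Your proof is correct, and it reorganizes the argument compared with the paper, though both rest on the same two pillars: the integrability of the L\'evy measure (to kill $\alpha>2$) and the Feller-class bounds \eqref{ass2}--\eqref{ass2b} (to kill the endpoints). The paper excludes $\alpha=0$ by quoting the power-law \emph{lower} bound $C_{12}y^{2-\rho}\le\pibar(z)/\pibar(zy)$ already derived in Lemma \ref{bdds} (which in fact forces $\alpha\ge 2-\rho>0$ directly), and excludes $\alpha=2$ by citing Feller's Karamata-type theorem (\cite{feller66} VIII.9, Theorem 1) to get $x^2\pibar(x)/U(x)\to 0$, contradicting the assumption that $X$ is not attracted to the normal law. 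You instead compute the limit $z^2\pibar(z)/U(z)\to(2-\alpha)/2$ once and for all via $U(z)=2\int_0^z y\pibar(y)\,\rmd y$ and Karamata/Potter (with Fatou giving the value $0$ at $\alpha=2$), and then read both exclusions off the two sides of \eqref{ass2b}; your Fatou step is essentially a self-contained proof of the special case of Feller's theorem that the paper cites. The one point needing care --- domination of $u\,\pibar(zu)/\pibar(z)$ by an integrable power of $u$, which the paper's own bound \eqref{lem5} does \emph{not} supply since $D$ may exceed $2$ --- you correctly flag and resolve with Potter's bounds (or by quoting Karamata directly), so the argument is complete.
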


\begin{proof}[Proof of Lemma \ref{alpha}:] 
By \eqref{lem5} and \eqref{ass9}, there exists a constant $0<C_{12}<\infty$ such that for $y\ge 1, yz \le 1$,
\begin{equation}\label{alpha_1}
C_{12} y^{2-\rho} \le \frac{\pibar(z)}{\pibar(zy)}\le C_5 y^{D}.
\end{equation}
The lefthand inequality in \eqref{alpha_1} follows from \eqref{ass2} and \eqref{ass9}, which gives 
\[ \pibar(zy) \le C_2 \frac{V(zy)}{V(z)}\frac{V(z)}{z^2} y^{-2} \le \frac{(1+C_2)C_2}{C_1}y^{\rho-2} \pibar(z):= C_{12}^{-1} y^{\rho-2}\pibar(z).
\]
Hence \eqref{alpha_1} shows that we must have $0<\alpha < \infty$ in \eqref{lem11}.
For $\alpha > 2$, the L\'evy measure fails the integrability condition, i.e.
\[ \int_{\R_*} \,( 1 \wedge x^2  ) \, \Pi(\rmd x) = \infty.
\]
So $0< \alpha\le 2$. Last suppose $\alpha = 2$ and recall that $\sigma^2 = 0$. Then by 
 Feller \cite{feller66} VIII.9, Theorem 1, p.273, we have
\[ \lim_{x \dto 0}\frac{x^2\pibar(x)}{2\int_0^x y \pibar(y)\rmd y} = 0.
\] This implies $x^2\pibar(x)/U(x) \to 0$, so that $X_t$ is in the domain of attraction of the normal law, contrary to assumptions. We can conclude that $0<\alpha <2$. 

\end{proof}

So far we have shown that either \eqref{convStb.asy} or \eqref{convStb.mod} implies that the L\'evy measure has regularly varying singularity with index $-\alpha$ and $\alpha \in (0,2)$. Next we need to treat the two cases separately to get the limit of $\pibar^\pm(x)/\pibar(x)$ as $x \dto 0$. A complication comes when starting from assumption \eqref{convStb.mod} as the distribution of the ordered modulus jump $\wt {\Delta X}_t^{(j)}$ is expressed in terms of both $\pibar$ and $\pibar^\pm$ (see \eqref{or-1} below) whereas in asymmetrical trimming, the ordered jumps $\Delta X_t^{(j),\pm}$ only involve $\pibar^\pm$. 

\begin{lemma}\label{lemma12}
Under the conditions of Lemma \ref{lemma11}, \eqref{convStb.asy} implies the limits
\begin{equation}\label{lem12}
\lim_{z \dto 0} \frac{\pibar^{\pm}(z)}{\pibar(z)} \quad \text{exist}.
\end{equation}
\end{lemma}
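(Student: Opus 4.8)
The plan is to run the argument of Lemma \ref{lemma11} again, but now comparing $\pibar^+$ (and $\pibar^-$) with the \emph{total} tail $\pibar$ rather than comparing $\pibar$ with itself at two scales. The ingredients already available are: $\pibar\in RV(-\alpha)$ at $0$ with $\alpha\in(0,2)$ (Lemmas \ref{lemma11} and \ref{alpha}); the one‑sided sandwich estimates \eqref{lem10_0}, \eqref{lem10_0b} and their sum \eqref{lem10_00b} from Lemma \ref{lemma10}; and the substitution $t=t(z,x)$ of Lemma \ref{xbt}, for which $xb_{t(z,x)}/z\to1$ as $z\dto0$. Abbreviate $A^+_x:=\{(r+1)!(1-G(x))\}^{1/(r+1)}$, $A^-_x:=\{(s+1)!G(-x)\}^{1/(s+1)}$ and $\Lambda(x):=A^+_x+A^-_x$, so that $A^+_x/\Lambda(x)+A^-_x/\Lambda(x)=1$.

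\textbf{Transferring the bounds to $z\dto0$.} Fix large $x$ and small $\veps,\eta>0$, and put $t=t(z,x)$. Exactly as in the passage \eqref{11_a:u1}--\eqref{11_a:u4}, the pinching $\tfrac{1+\eta}{1+2\eta}xb_t\le z\le\tfrac{1-\eta}{1-2\eta}xb_t$ (valid for $z$ small) together with the monotonicity of $\pibar^\pm$ and $\pibar$, and the identity $\pibar^\pm(z)/\pibar(z)=t\pibar^\pm(z)/t\pibar(z)$, give after bounding numerator and denominator separately
\[
\limsup_{z\dto0}\frac{\pibar^\pm(z)}{\pibar(z)}\le\frac{1+2\veps}{1-\veps}\,\frac{A^\pm_{x/(1+2\eta)}}{\Lambda(x/(1-2\eta))},
\qquad
\liminf_{z\dto0}\frac{\pibar^\pm(z)}{\pibar(z)}\ge(1-\veps)\,\frac{A^\pm_{x/(1-2\eta)}}{\Lambda(x/(1+2\eta))}-\veps\,R(x,\eta),
\]
where the $\veps\,t\pibar(xb_t)$ terms in \eqref{lem10_0}--\eqref{lem10_0b} are reabsorbed by the device used just before \eqref{11_a:u4}, and the remainder $R(x,\eta)=\Lambda(x/((1-2\eta)(1+\eta)))/\Lambda(x/(1+2\eta))$ is finite and bounded (by \eqref{ass9} and \eqref{lem10_00b}); in particular it disappears when $\veps\dto0$.

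\textbf{Passing to $x\to\infty$.} The maps $x\mapsto A^\pm_x/\Lambda(x)$ are $[0,1]$‑valued, so by Helly's selection principle together with a diagonal extraction we may choose $x_n\to\infty$ along which, for every $c$ in a fixed countable dense subset of $(0,\infty)$, both $\Lambda(x_nc)/\Lambda(x_n)\to c^{-\alpha}$ (available along the Helly sequence of Lemma \ref{lemma11}, or a refinement of it, since $\pibar$ — hence $\Lambda$ along the subsequence — is regularly varying of index $-\alpha$) and $A^\pm_{x_nc}/\Lambda(x_nc)\to\theta^\pm(c)$ for functions $\theta^\pm$ with $\theta^++\theta^-\equiv1$. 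Substituting $x=x_n$ into the two inequalities above, letting $n\to\infty$, then $\eta\dto0$, then $\veps\dto0$ yields
\[
\limsup_{z\dto0}\frac{\pibar^\pm(z)}{\pibar(z)}\le\theta^\pm(1)\le\liminf_{z\dto0}\frac{\pibar^\pm(z)}{\pibar(z)},
\]
so the limits in \eqref{lem12} exist and equal $\theta^\pm(1)$, with $\theta^+(1)+\theta^-(1)=1$. An appeal to Theorem 2.3 of Maller and Mason \cite{MM08} then puts $X_t$ in the domain of attraction of an $\alpha$‑stable law.

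\textbf{The main obstacle.} The delicate point is the last limit interchange, which needs $\theta^\pm(\tfrac{1}{1\pm2\eta})\to\theta^\pm(1)$ as $\eta\dto0$, i.e. a control on the oscillation of $A^\pm_x/\Lambda(x)$ under scaling near $1$. Unlike in Lemma \ref{lemma11} — where the analogous function was monotone and Helly's principle gave a genuinely monotone limit — here $A^\pm_x/\Lambda(x)$ need not be monotone, and a priori $\pibar^\pm$ is not known to be regularly varying. I expect the resolution to be that one only has the Step‑2 bounds at continuity points of $\theta^\pm$, and must exploit that $A^\pm_x/\Lambda(x)$ is squeezed, via Lemma \ref{lemma10}, between quantities of the form $\pibar^\pm(xb_t)/\pibar(xb_t)$ whose fluctuation under a scaling close to $1$ is governed by the already‑established regular variation of the \emph{total} tail $\pibar$; equivalently, combining the $\limsup$‑bound at scale $\tfrac{1}{1+2\eta}$ with the $\liminf$‑bound at scale $\tfrac{1}{1-2\eta}$ should pin $\theta^\pm$ down in a neighbourhood of $1$. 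This is the point where the asymmetric case \eqref{convStb.asy} is genuinely cleaner than the modulus case, since the ordered jumps $\Delta X_t^{(j),\pm}$ involve only $\pibar^\pm$; everything else is bookkeeping parallel to the proof of Lemma \ref{lemma11}.
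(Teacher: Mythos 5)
Your overall strategy is the paper's: transfer the one‑sided bounds of Lemma \ref{lemma10} to $z\dto 0$ via $t=t(z,x)$, extract a subsequence $x_n\to\infty$ along which the relevant ratios of $G$‑quantities converge, and squeeze $\pibar^\pm(z)/\pibar(z)$ between quantities that coincide after $n\to\infty$, $\eta\dto 0$, $\veps\dto 0$. But as written the argument has a genuine gap, and it is exactly the one you flag yourself: you define a limit \emph{function} $\theta^\pm(c)=\lim_n A^\pm_{x_nc}/\Lambda(x_nc)$ on a countable dense set and then need $\theta^\pm(1/(1\pm2\eta))\to\theta^\pm(1)$ as $\eta\dto0$. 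Since $A^\pm_x/\Lambda(x)$ is not monotone and $\pibar^\pm$ is not yet known to be regularly varying, nothing forces this continuity at $1$; a bounded function selected by a diagonal argument can oscillate, and "continuity points of $\theta^\pm$" need not include $1$ nor accumulate at it from the relevant side in a usable way. So the final limit interchange is not justified, and the proof does not close.

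The paper avoids the issue by never introducing $\theta^\pm$ as a function of a scaling variable. It chooses the arguments in the two Lemma \ref{lemma10} bounds so that the numerator $\{(s+1)!G(-\cdot)\}^{1/(s+1)}$ and the denominator $\Lambda(\cdot)$ are both evaluated at the \emph{same} point $x_n$ (take $x=x_n(1+2\eta)$ in \eqref{lem10_0b} for the numerator, and $y=(1-2\eta)/(1+2\eta)$ with the same $x$ in the $\Lambda$ lower bound, so that $xy/(1-2\eta)=x_n$); see \eqref{12_b}--\eqref{12_d}. The only mismatch that survives is in the argument of the \emph{total} tail, $\pibar(z(1-2\eta)/(1+2\eta))$ versus $\pibar(z)$, and that is repaired by Lemma \ref{lemma11} at the cost of a factor $((1+2\eta)/(1-2\eta))^{\alpha}\to 1$. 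One then only needs a single bounded real sequence $\{(s+1)!G(-x_n)\}^{1/(s+1)}/\Lambda(x_n)$ to converge to a constant $\theta$ along a further subsequence, and the same $\theta$ bounds the $\limsup$ from above and the $\liminf$ from below. This is precisely the resolution you "expect" in your last paragraph; to make your proof correct you should implement it, i.e. replace your bounds $A^\pm_{x/(1+2\eta)}/\Lambda(x/(1-2\eta))$ by bounds of the form $A^\pm_{x_n}/\Lambda(x_n)$ times a $\pibar$‑regular‑variation correction, and drop the Helly extraction of $\theta^\pm(\cdot)$ as a function altogether.
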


\begin{proof}[Proof of Lemma \ref{lemma12}:]
Recall the definition of $\Lambda(\cdot)$ in \eqref{11_b_1} and note that for all $x > 1$,
\[\{(s+1)!G(-x)\}^{1/(s+1)}/\Lambda(x)\le 1.
\]
We proved in \eqref{R3} that for some sequence $\{x_n\}$ 
\begin{equation}\label{12_a}
\lim_{n \to \infty}\frac{\Lambda(x_n)}{\Lambda(x_n y)} = y^\alpha
\end{equation} for some $\alpha \in (0,2)$ and each $y > 0$.
By taking a further subsequence if necessary, still denoted by $\{x_n\}$, we have 
\[ \lim_{n\to \infty} \frac{\{(s+1)!G(-x_n)\}^{1/(s+1)}}{\Lambda(x_n)} = \theta \quad \text{for some } 0<\theta<1 .
\] 

Proceeding similarly to \eqref{11_a:u1}, using \eqref{lem10_0b},
take $x = x_n(1+ 2\eta)$, and sufficiently small $z > 0$ with $t = t(z,x)$ in \eqref{t-df}, we have
\begin{align}\label{12_b}
t\pibar^-(z)\le t\pibar^-\left( xb_t\frac{1+\eta}{1+2\eta}\right)&\le \left(\frac{1+\veps}{1-\veps}\right) \left((s+1)!G\left(-\frac{x}{1+2\eta}\right)\right)^{1/(s+1)} \nonumber \\
&= \left(\frac{1+\veps}{1-\veps}\right) \left((s+1)!G\left(-x_n\right)\right)^{1/(s+1)}.
\end{align} Similar to \eqref{11_a:u4}, take $y = 1-2\eta/1+2\eta$ and $x = x_n /(1+2\eta)$, and for sufficiently small $z > 0$ with $t = t(z,x)$, we have  
\begin{align}\label{12_c}
t\pibar\left(z\frac{1-2\eta}{1+2\eta}\right)\ge \left(\frac{1-\veps}{1+2\veps}\right)\Lambda\left(\frac{x}{1+2\eta}\right)=\left(\frac{1-\veps}{1+2\veps}\right)\Lambda(x_n).
\end{align} Putting \eqref{12_b} and \eqref{12_c} together,
\begin{align}\label{12_d}
 \frac{\pibar^-(z)}{\pibar(z (1-2\eta)/(1+2\eta))}\le \left(\frac{1+2\veps}{1-\veps}\right)^2 \frac{\left((s+1)!G\left(-x_n\right)\right)^{1/(s+1)}}{\Lambda(x_n)}.
\end{align} By \eqref{lem11}, we see that, for sufficiently small $z>0$,
\begin{equation}\label{12_e}
\frac{\pibar^-(z)}{\pibar(z)}\le \left(\frac{1+2\veps}{1-\veps}\right)^3\left(\frac{1+2\eta}{1-2\eta} \right)^{\alpha}\frac{\left((s+1)!G\left(-x_n\right)\right)^{1/(s+1)}}{\Lambda(x_n)}.
\end{equation} 
Now we can take $\limsup_{z\dto 0}$ on the lefthand side and $n \to \infty$, $\veps, \eta \dto 0$ on the right hand side, to achieve the upper estimate
\[ \limsup_{z\dto 0} \frac{\pibar^-(z)}{\pibar(z)}\le \theta.
\] Similarly, we also have
\[ \liminf_{z\dto 0} \frac{\pibar^-(z)}{\pibar(z)}\ge \theta.
\] This completes the proof of \eqref{lem12} for $\pibar^-$ and $\pibar^+$ is similar.

\end{proof}

\section{Extra argument for Modulus Trimming}

Lemma \ref{lemma12} completes the proof of Theorem \ref{convStb} from assumption \eqref{convStb.asy}. The next lemma starts from assumption \eqref{convStb.mod} and gives the last ingredient of the proof. Note that an extra argument is needed (see \eqref{lem13.6} -- \eqref{lem13.17}).

\begin{lemma}\label{lemma13}
Under the conditions of Lemma \ref{lemma11}, \eqref{convStb.mod} also implies \eqref{lem12}.
\end{lemma}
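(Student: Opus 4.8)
The plan is to push the argument of Lemma~\ref{lemma12} as far as it goes and then add an analytic step that is absent in the asymmetric case, because for modulus trimming the law of $\wt{\Delta X}_t^{(r+1)}$ cannot be expressed cleanly through $\pibar^-$ alone.

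\emph{Step 1 (the signed order-statistic identity \eqref{or-1}).} Viewing the jumps of $X$ on $[0,t]$ as a Poisson point process and conditioning on the magnitude $m$ of the jump that ends up ranked $(r+1)$-st in modulus --- so that exactly $r$ jumps have larger modulus, a $\mathrm{Poisson}(t\pibar(m))$ event --- one gets
\begin{equation*}
P\big(\wt{\Delta X}_t^{(r+1)}\le -a\big)=\int_a^\infty e^{-t\pibar(m)}\,\frac{(t\pibar(m))^r}{r!}\,t\,\Pi^-(\rmd m),\qquad a>0,
\end{equation*}
and the analogous formula with $\Pi^-$ replaced by $\Pi^+$ for $P(\wt{\Delta X}_t^{(r+1)}\ge a)$; adding the two recovers $P(|\wt{\Delta X}_t^{(r+1)}|>a)=P(\Gamma_{r+1}\le t\pibar(a))$ of \eqref{rJ1}. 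Under \eqref{cty.ass} ties cause no trouble. From here, the modulus forms of Lemmas~\ref{lemma6} and~\ref{lemma8} (namely \eqref{lem6_2b} and \eqref{lem8_0b}) show, at continuity points of $\wt G$, that the negative tail $\wt G(-x)$ is squeezed as $t\dto 0$ between $P(\wt{\Delta X}_t^{(r+1)}\le -x'b_t)$ at nearby $x'$, and likewise for the positive tail. In Lemma~\ref{lemma12} this sufficed because $P(\Delta X_t^{(s+1),-}\ge xb_t)$ is a clean function of $t\pibar^-(xb_t)$; here the right-hand side of \eqref{or-1} entangles $\pibar^-$ with $\pibar$, so no limit for $\pibar^-(z)/\pibar(z)$ drops out, and an extra argument is required.

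\emph{Step 2 (disentangling, using $\pibar\in RV(-\alpha)$ from Lemma~\ref{lemma11}).} Let $\vartheta=\rmd\Pi^-/\rmd\Pi\in[0,1]$ (this density exists since $\Pi^-\le\Pi$) and $\theta(z)=\pibar^-(z)/\pibar(z)\in[0,1]$; under \eqref{cty.ass} $\theta$ is continuous, and from $\pibar^\pm$ nonincreasing together with $\pibar\in RV(-\alpha)$ one checks $|\theta(z)-\theta(z')|\le C|\log(z/z')|$ for all small $z,z'$. Substituting $v=t\pibar(m)$ in \eqref{or-1} gives
\begin{equation*}
\frac{P(\wt{\Delta X}_t^{(r+1)}\le -z)}{P(|\wt{\Delta X}_t^{(r+1)}|>z)}=\frac{\int_0^{t\pibar(z)}e^{-v}\frac{v^r}{r!}\,\vartheta\big(\pibarinv(v/t)\big)\,\rmd v}{\int_0^{t\pibar(z)}e^{-v}\frac{v^r}{r!}\,\rmd v},
\end{equation*}
a weighted average of $\vartheta$ near $z$. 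Taking $z=xb_t$ with $x$ large, so $t\pibar(z)\to 0$, and using $\pibar\in RV(-\alpha)$, this average equals $\Theta(z)+o(1)$ where $\Theta(z):=(r+1)\alpha\int_1^\infty s^{-\alpha(r+1)-1}\vartheta(zs)\,\rmd s$ and the error tends to $0$ as $x\to\infty$ and $z\dto 0$. Step~1 forces the left-hand side, hence $\Theta(xb_t)$, to have vanishing oscillation in $t$ as $x\to\infty$; by Lemma~\ref{xbt} (which realises any small $z$ as $\approx x b_{t(z,x)}$) this yields $\Theta(z)\to\Theta^*$ as $z\dto 0$. Writing $\Theta$ as a geometric-ring average, $\Theta(\lambda z)=\lambda^{\alpha(r+1)}\big(\Theta(z)-(1-\lambda^{-\alpha(r+1)})\,\bar\vartheta_{[z,\lambda z]}\big)$, one extracts that the thin-ring averages $\bar\vartheta_{[z,\lambda z]}$ of $\vartheta$ converge to $\Theta^*$ as $z\dto 0$; then summing $\pibar^-(z)=\sum_{k\ge0}(\pibar^-(z\lambda^k)-\pibar^-(z\lambda^{k+1}))$, approximating each increment by $\vartheta$ times the matching $\pibar$-increment (error $O(\gamma)$ on thin rings), dividing by $\pibar(z)=\sum_k(\pibar(z\lambda^k)-\pibar(z\lambda^{k+1}))$, and using that the normalised increments decay geometrically ($\to\lambda^{-\alpha k}(1-\lambda^{-\alpha})$ by $\pibar\in RV(-\alpha)$), one gets $\theta(z)=\pibar^-(z)/\pibar(z)\to\Theta^*$. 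Since $\Theta^*$ cannot depend on $\lambda$ or $x$, $\lim_{z\dto 0}\pibar^-(z)/\pibar(z)$ exists; the same holds for $\pibar^+$ (or use $\pibar^+=\pibar-\pibar^-$). This is \eqref{lem12}, and together with Lemma~\ref{lemma12} it completes the converse direction of Theorem~\ref{convStb} under \eqref{cty.ass}.

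\emph{Main obstacle.} The genuine difficulty is the entanglement in \eqref{or-1}: $P(\wt{\Delta X}_t^{(r+1)}\le -z)$ is not proportional to $\theta(z)$ but is a $\pibar$-weighted \emph{average} of the local proportion $\vartheta$ over a window of fixed multiplicative width shrinking to $0$ in scale. The work is to show this averaging is faithful enough that its convergence forces $\theta$ itself to converge; this needs both the regular variation of $\pibar$ (from Lemma~\ref{lemma11}) and the Lipschitz-in-$\log$ control of $\theta$ (from monotonicity of $\pibar^\pm$), and one must be careful that the individual order-statistic tails $P(\wt{\Delta X}_t^{(r+1)}\le \pm xb_t)$ may a priori oscillate in $t$ and are pinned down only through their ratio and through the $\wt G$-sandwich in the limit $x\to\infty$.
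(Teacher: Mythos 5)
Your Step 1 and the first half of Step 2 track the paper's argument: the signed order-statistic formula is the paper's \eqref{or-1}, and the sandwich from \eqref{lem6_2b}, \eqref{lem8_0b} and \eqref{lem10_0c}, combined with Lemma \ref{xbt} and a Helly subsequence $x_n$, does establish that the weighted average
$\int_{y>z}\bigl(\pibar(y)/\pibar(z)\bigr)^{r}\,\Pi^-(\rmd y)/\pibar(z)$
converges to a constant $\theta$ as $z\dto 0$ (this is \eqref{lem13.10} in the paper, with $W(z)=\int_{y>z}\pibar(y)^r\,\Pi^-(\rmd y)$). You also correctly identify the entanglement of $\pibar^-$ with $\pibar$ as the real obstacle.

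The gap is in your un-averaging step. First, your $\Theta(z):=(r+1)\alpha\int_1^\infty s^{-\alpha(r+1)-1}\vartheta(zs)\,\rmd s$ is a \emph{Lebesgue} integral of the Radon--Nikodym density $\vartheta=\rmd\Pi^-/\rmd\Pi^{|\cdot|}$, which is only defined $\Pi^{|\cdot|}$-a.e.; the standing assumption \eqref{cty.ass} makes $\Pi$ diffuse, not absolutely continuous with respect to Lebesgue measure (a Cantor-type $\Pi^{|\cdot|}$ with regularly varying tail is allowed), so $\Theta$ need not even be well defined, and the replacement of the exact average $\int_0^{t\pibar(z)}e^{-v}\frac{v^r}{r!}\vartheta(\pibarinv(v/t))\,\rmd v$ by $\Theta(z)+o(1)$ is not an application of regular variation of $\pibar$ alone. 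Second, even granting absolute continuity, your reconstruction of $\pibar^-(z)=\sum_k\Pi^-(\text{ring}_k)$ requires the $\Pi^{|\cdot|}$-weighted ring averages of $\vartheta$ (since $\Pi^-(\text{ring}_k)=\int_{\text{ring}_k}\vartheta\,\rmd\Pi^{|\cdot|}$), whereas the $\Theta$-identity only delivers convergence of the $y^{-\beta-1}\rmd y$-weighted ring averages; for a merely measurable $\vartheta$ these two averages over a ring of fixed multiplicative width need not be close, because regular variation controls ring masses but not how $\Pi^{|\cdot|}$ distributes mass inside a ring relative to Lebesgue measure. The Lipschitz-in-$\log$ bound you prove is for $\theta(z)=\pibar^-(z)/\pibar(z)$, not for $\vartheta$, and does not close this. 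The paper avoids all of this with an exact identity: by Fubini (integration by parts),
\begin{equation*}
\frac{\pibar^-(z)}{\pibar(z)}=\frac{W(z)}{\pibar(z)^{r+1}}+\frac{r}{\pibar(z)}\int_{y>z}\frac{W(y)}{\pibar(y)^{r+1}}\,\Pi^{|\cdot|}(\rmd y),
\end{equation*}
and then $W(y)/\pibar(y)^{r+1}\to\theta$ together with $\pibar(0+)=\infty$ (which kills the contribution of the integral away from $0$) gives $\pibar^-(z)/\pibar(z)\to(1+r)\theta$ directly, with no approximation of $\vartheta$ and no use of regular variation in this final step. You should replace your ring-average extraction by this identity (or an equivalent exact Abelian argument carried out entirely against the measure $\Pi^{|\cdot|}$).
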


\begin{proof}[Proof of Lemma \ref{lemma13}:]
Recall that $\LLL(^{(r)}\wt S_t) \to\wt G$.
Fix $\veps > 0$ and $0<\eta < 1/3$.
By \eqref{lem8_0b}, for $x > x_9$,  $t < t_5$ we have 
\begin{align}\label{lem13.1}
P(\wt{\Delta X}_t^{(r+1)} \le -x(1-\eta)b_t) 
\ge P(^{(r)}\wt S_t \le -x) -\veps \left(t\pibar(xb_t)\right)^{r+1} \ge (1-\veps) \wt G(-x-)- \veps \left(t\pibar(xb_t)\right)^{r+1}.
\end{align}
By \eqref{11_bt}, for $x > x_9$ and sufficiently small $z > 0$ with $t = t(z,x)$, we have by \eqref{lem13.1},
\ben\label{lem13.1.5}
P(\wt{\Delta X}_t^{(r+1)} \le -z) \ge P\left( \wt{\Delta X}_t^{(r+1)} \le - xb_t \frac{1-\eta}{1-2\eta} \right)\ge (1-\veps)\wt G\left(-\frac{x}{1-3\eta}\right)-\veps \left(t\pibar\left(\frac{xb_t}{1-2\eta}\right)\right)^{r+1}.
\een
Write 
\[ \wt \Lambda (x) = (r+1)! \left( \wt G(-x) +  1 - \wt G(x) \right).
\]
By \eqref{11_bt} (also see \eqref{11_a:u1}-\eqref{11_a:u4}), 
\begin{align*}
\left( t\pibar(z)\right)^{r+1} 
&\le \left( t\pibar\left(xb_t\frac{1+\eta}{1+2\eta}\right) \right)^{r+1} 
\nonumber \\
&\le (1+\veps) \left(\frac{1+2\eta}{1-3\eta}\right)^{\alpha(r+1)}\left(t\pibar\left(xb_t\frac{1+\eta}{1-3\eta}\right) \right)^{r+1} \nonumber \\
& \le \frac{(1+\veps)^2}{(1-\veps)^{r+1}} \left(\frac{1+2\eta}{1-3\eta}\right)^{\alpha(r+1)} \wt \Lambda\left(\frac{x}{1-3\eta}\right),
\end{align*}
where the second line is due to \eqref{lem11} and the last line is due to \eqref{lem10_0c}.
Therefore we have for $x > x_{10}(\veps,\eta) \ge x_9$ and sufficiently small $z > 0$ with $t =t(z,x)$,
\begin{align}\label{lem13.3}
 \frac{P(\wt{\Delta X}_t^{(r+1)} \le -z)}{\left( t\pibar(z)\right)^{r+1} } 
& \ge \frac{(1-\veps) \wt G(-x/(1-3\eta))}{(t\pibar(z))^{r+1}} - \veps\frac{(t\pibar(xb_t/(1-2\eta)))^{r+1}}{(t\pibar(z))^{r+1}} \nonumber \\
& \ge \left(\frac{1-\veps}{1+\veps}\right)^{r+2} \left(\frac{1-3\eta}{1+2\eta}\right)^{\alpha(r+1)} \frac{\wt G(-x/(1-3\eta))}{\wt \Lambda(x/(1-3\eta))}- \veps.
\end{align} We can get an upper bound of the ratio in a similar way. By \eqref{lem6_2b}, with $x' = x/(1+2\eta)$ and $y = \eta x'/(1+2\eta)$ we have for sufficiently small $z>0$ with $t = t(z,x)$,
\be\label{lem13.4}
P(\wt{\Delta X}_t^{(r+1)} \le -z ) \le P(\wt{\Delta X}_t^{(r+1)} \le -x\frac{1+\eta}{1+2\eta}b_t ) \le (1+\veps)^2 \wt G(-x/(1+2\eta)). 
\ee Again by \eqref{lem11} and the RHS of \eqref{lem10_0c}, we have 
\begin{align}\label{lem13.5}
\left(t\pibar(z)\right)^{r+1} 
&\ge \left( t\pibar\left(x b_t \frac{1+3\eta}{1+2\eta}\right) \right)^{r+1} \nonumber \\
&\ge \frac{1-\veps}{(1+\veps)^{r+1}} \left( \frac{1-\eta}{1+3\eta}\right)^{\alpha(r+1)} \left[ t\pibar\left(x b_t \frac{1-\eta}{1+2\eta}\right)+ \veps t\pibar\left(x b_t \frac{1-\eta}{1+2\eta}\right)  \right]^{r+1} \nonumber \\
&\ge \frac{1-\veps}{(1+\veps)^{r+1}} \left( \frac{1-\eta}{1+3\eta}\right)^{\alpha(r+1)} \left[ t\pibar\left(x b_t \frac{1-\eta}{1+2\eta}\right)+ \veps t\pibar\left(x b_t \frac{1}{1+2\eta}\right)  \right]^{r+1} \nonumber \\
&\ge \frac{(1-\veps)^2}{(1+\veps)^{r+1}} \left( \frac{1-\eta}{1+3\eta}\right)^{\alpha(r+1)} \wt \Lambda \left( \frac{x}{1+2\eta}\right).
\end{align} 
Putting \eqref{lem13.4} and \eqref{lem13.5} together, we can achieve an upper bound as follows.
\be\label{lem13.6}
  \frac{P(\wt{\Delta X}_t^{(r+1)} \le -z)}{\left( t\pibar(z)\right)^{r+1} } \le \left(\frac{1+\veps}{1-\veps}\right)^{r+3} \left(\frac{1+3\eta}{1-\eta}\right)^{\alpha(r+1)} \frac{\wt G(-x/(1+2\eta))}{\wt \Lambda(x/(1+2\eta))}.
\ee
Next we would like to extract the information about $\pibar^-(z)$ from $P(\wt{\Delta X}_t^{(r+1)} \le -z)$. To achieve this, observe that 
$\Pi^\pm$ is absolutely continuous with respect to $\Pi^{|\cdot|}$, and define the Radon-Nikodym derivatives $g^\pm = \Pi^\pm /\Pi^{|\cdot|}$. By a similar calculation as in \eqref{rJ1} (see \cite{bfm14} for more details), we have
\begin{align}\label{or-1}
P(\wt{\Delta X}_t^{(r+1)} \le -z) 
&= \int_{y>z} g^-(y) P(|\wt {\Delta X}_t^{(r+1)} | \in \rmd y) \nonumber \\
&= \int_{y>z} g^-(y) te^{-t\pibar(y)}\frac{(t\pibar(y))^r}{r!} \Pi^{|\cdot|}(\rmd y) \quad (\text{by \eqref{rJ1}}) \nonumber \\
&\ge  \frac{t^{r+1}}{r!}e^{-t\pibar(z)} \int_{y>z} \pibar(y)^r \Pi^-(\rmd y).
\end{align}
The second line follows by noting that the image measure of Lebesgue measure  under mapping $\pibarinv$ is $(\rmd y)^{\pibarinv} = \Pi^{|\cdot|}$. The third line is due to the fact that $g^-(y)\Pi^{|\cdot|}(\rmd y) = \Pi^-(\rmd y)$. Recall from \eqref{ass9} and \eqref{11_bt} that we have for $x > x_{11}(\veps, \eta) \le x_{10}(\veps, \eta)$ and $t<t_5 $, 
\[ 
 t\pibar(z) \le t\pibar( xb_t/2)\le C_7( x/2)^{\rho-2} \le \veps \le -\log(1-\veps).
\]
Hence $e^{-t\pibar(z)}\ge 1-\veps$ in \eqref{or-1} and by \eqref{lem13.6}, 
\begin{align}\label{lem13.7}
\frac{\int_{y>z} \pibar(y)^r \Pi^-(\rmd y) }{\pibar(z)^{r+1}}
& \le \frac{r!}{ 1-\veps} \frac{P(\wt{\Delta X}_t^{(r+1)} \le -z)}{\left( t\pibar(z)\right)^{r+1} } \nonumber \\
 & \le r! \left(\frac{1+\veps}{1-\veps}\right)^{r+4} \left(\frac{1+3\eta}{1-\eta}\right)^{\alpha(r+1)} \frac{\wt G(-x/(1+2\eta))}{\wt \Lambda(x/(1+2\eta))} . 
\end{align}
Note that the LHS of \eqref{lem13.7} does not depend on $x$. Since $\wt G /\wt \Lambda $ is bounded, there exists a sequence $\{x_n\} \to \infty$ such that the limit 
\ben\label{lem13.8}
\theta : = \frac{r! \wt G(-x_n)}{\wt \Lambda(x_n) } \quad \text{exists and is in } [0,1/(r+1)].
\een 
Choose $x = x_n(1+2\eta)$ on the RHS of \eqref{lem13.7}, take $n \to \infty$ and then $\veps \dto 0$, $\eta \dto 0$ to get
\ben\label{lem13.9}
\limsup_{z \to 0} \frac{\int_{y>z} \pibar(y)^r \Pi^-(\rmd y) }{\pibar(z)^{r+1}} \le \theta.
\een
With a similar argument using instead \eqref{lem13.3}, we can obtain the same lower bound for the liminf. Putting the two together we have shown
\be\label{lem13.10}
\lim_{z \to 0} \frac{\int_{y>z} \pibar(y)^r \Pi^-(\rmd y) }{\pibar(z)^{r+1}} = \theta.
\ee
Define a measure $W(\rmd y)$ by its tail function
\[ W(z) =\int_{y>z} \pibar(y)^r \Pi^-(\rmd y) = \int_{y>z} W(\rmd y). 
\] Then we have $\Pi^-(\rmd y)  = \pibar(y)^{-r} W(\rmd y)$. Hence for each $z > 0$,
\[ \pibar^-(z) = \int_{y>z} \pibar(y)^{-r} W(\rmd y).
\] Note that $\pibar(z)^{-r}$ is a nondecreasing function in $z$ which has value $0$ at $0$. We can write
\[ \pibar(z)^{-r}  = \int_{0<y<z} \rmd(\pibar(y)^{-r}).
\]
Exchange the order of integration by Fubini's theorem to get
\begin{align}\label{lem13.11}
\frac{\pibar^-(z)}{\pibar(z)} 
&= \frac{1}{\pibar(z)} \int_{y > z} \left(\int_{0<x < y} \rmd (\pibar(x)^{-r})\right) W(\rmd y) \nonumber \\
&= \frac{1}{\pibar(z)} \int_{y > z} \left(\int_{0<x < z}  + \int_{z<x<y}\rmd (\pibar(x)^{-r})\right) W(\rmd y) \nonumber \\
& = \frac{1}{\pibar(z)} \left(  \int_{0<x < z } \int_{y > z}  W(\rmd y)\rmd (\pibar(x)^{-r}) + \int_{x >z} \int_{y > x} \rmd W(y)\rmd (\pibar(x)^{-r}) \right)\nonumber \\
&=  \frac{W(z)}{\pibar(z)^{r+1}} + \int_{x>z} W(x)\rmd (\pibar(x)^{-r}) \nonumber \\
& = \frac{W(z)}{\pibar(z)^{r+1}} + \frac{r}{\pibar(z)}\int_{x>z} \frac{W(x)}{\pibar(x)^{r+1}} \Pi^{|\cdot|}(\rmd x) .
\end{align}
We assume $\pibar(x) > 0$ for all $x > 0$ in \eqref{lem13.11}, otherwise truncate the integrals at the right extreme of $\pibar$. Recall that $\pibar(y)$ is assumed continuous, so $\rmd \pibar^{-r}(x) = \pibar^{-r-1}(x)\Pi^{|\cdot|}(\rmd x)$.
Note that by \eqref{lem13.10}, we have $W(z)/\pibar(z)^{r+1} \to \theta$ as $z \to 0$. Thus the first term in the last line of \eqref{lem13.11} converges to $\theta$.  To deal with the second term, we observe that for any given $\vsig > 0$, there exists $\vphi(\vsig) > 0$ small such that with $0< y < \vphi(\vsig)$, we have by \eqref{lem13.10} that 
\be\label{lem13.12}  
\theta (1-\vsig) < \frac{W(y)}{\pibar(y)^{r+1}} < \theta (1+\vsig).
\ee
Choose $z < \vphi(\vsig)$. Then we can write the integral in the last line of \eqref{lem13.11} as
\begin{align}\label{lem13.13}
&\frac{r}{\pibar(z)} \int_{y>z} \frac{W(y)}{\pibar(y)^{r+1}} \Pi^{|\cdot|} (\rmd y) \nonumber \\
&= \frac{r}{\pibar(z)} \int_{z}^{\vphi(\vsig) }\frac{W(y)}{\pibar(y)^{r+1}} \Pi^{|\cdot|} (\rmd y)+
\frac{r}{\pibar(z)} \int_{\vphi(\vsig)}^\infty \frac{W(y)}{\pibar(y)^{r+1}} \Pi^{|\cdot|} (\rmd y) =: \YYY_1(z, \vphi) + \YYY_2(z, \vphi).
\end{align}
As $\pibar(0+) = \infty$, $ \lim_{z \to 0} \YYY_2(z,\vphi) = 0$.
By \eqref{lem13.12}, we can bound $\YYY_1$ above and below as follows:
\begin{align*}
\YYY_1(z, \vphi)  < \frac{r}{\pibar(z)} \theta (1+ \vsig) \int_{z}^\vphi \Pi^{|\cdot|}(\rmd y)
= r\theta(1+ \vsig) \left(1 - \frac{\pibar(\vphi)}{\pibar(z)}\right)
\end{align*} and
\begin{align*}
\YYY_1(z, \vphi)  > \frac{r}{\pibar(z)} \theta (1 - \vsig) \int_{z}^\vphi \Pi^{|\cdot|}(\rmd y)
= r\theta(1- \vsig) \left(1 - \frac{\pibar(\vphi)}{\pibar(z)}\right).
\end{align*} Take $z \to 0$ to get 
\ben\label{lem13.16}
 r \theta (1- \vsig)<\lim_{z \to 0} \YYY_1(z, \vphi) < r \theta (1+ \vsig).
\een
Since limits exist for both $\YYY_1$ and $\YYY_2$, we can add them together to get 
\be\label{lem13.17}
 r \theta (1- \vsig)<\lim_{z\to 0}\frac{r}{\pibar(z)} \int_{z}^\infty \frac{W(y)}{\pibar(y)^{r+1}} \Pi^{|\cdot|} (\rmd y)<r \theta (1+ \vsig).
\ee Now take $\vsig \to 0$, then the last line of \eqref{lem13.11} tends to $\theta + r \theta$, hence 
\[ \lim_{z\to 0 } \frac{\pibar^-(z)}{\pibar(z)} = \theta (1+r) \le  1.
\]
This completes the proof.

\end{proof}

\begin{remark}\label{sldiff}
We note here a distinctive difference between our small time and Kesten's large time derivations (\cite{kesten93}) in Lemma \ref{lemma13}. The singularity in $\Pi$ at $0$ is required to eliminate $\YYY_2(z, \vphi)$ in \eqref{lem13.13}.
\end{remark}

So far we have proven that under the conditions in Theorem \ref{convStb} with diffuse L\'evy measure $\Pi$, either \eqref{convStb.asy} or\eqref{convStb.mod} implies $\pibar$ is regularly varying with index $\alpha \in (0,2)$ at $0$ (see Lemma \ref{lemma11}-- Lemma \ref{alpha}). Lemma \ref{lemma12} shows that \eqref{convStb.asy} implies the limit $\pibar^\pm(x)/\pibar(x)$ exists as $x \dto 0$ and Lemma \ref{lemma13} proves the existence of the limit from assumption \eqref{convStb.mod}. In the next section, we will remove the extra assumption on the L\'evy measure $\Pi$ to complete the proof in the most general setting.

\section{Remove the Continuity Assumption}\label{rmcty}
In this section, we aim to show that it is enough to prove Theorem \ref{convStb} with the assumption that the L\'evy measure of $X_t$, i.e. $\Pi$, is a diffuse measure. To see this, let us construct a L\'evy process $X_t^*$ with a continuous L\'evy measure $\Pi^*$ by the following procedure.

Let $(U_t)_{t \ge 0}$ be a subordinator with L\'evy measure $\Pi_U(\rmd x) = \rmd x {\bf 1}_{0\le x \le 1}$ and having jump process $(\Delta U_s)_{s \le t}$ independent of $(X_t)_{t\ge 0}$. Define $Y_t : =  \sum_{0<s\le t} \sgn (\Delta X_s) \Delta U_s  (\Delta X_s)^2$, $t > 0$. Then $Y_t$ is a L\'evy process.


Recall that by the L\'evy-It\^o decomposition, we can write $X_t$ as
\[ X_t = \gamma t + \sigma Z_t + X_t^{J}
\] where $X_t^J$ is the a.s. limit of a compensated jump process, i.e.
\[ X_t^J = \lim_{\veps \dto 0} \left(\sum_{0<s\le t } \Delta X_s {\bf 1}_{|\Delta X_s| \ge \veps} - t\int_{\veps < |x| \le 1} x \Pi(\rmd x)\right).
\]
Convolve the jump process $X_t^J$ with the randomised quadratic variation process, i.e. define 
\ben\label{sly3}
X_t^{J,*} = X_t^J + Y_t .
\een
Then let the new process be defined as 
\begin{equation*}\label{sly1}
X_t^{*} = \gamma t + \sigma Z_t + X_t^{J, *} = X_t + Y_t
\end{equation*}  where $\gamma$, $\sigma$, $Z_t$ are the same as occur in $X_t$. 

Denote $g_s = \sgn(\Delta X_s)$, $0<s\le t$, and the jumps of $X_t^*$ by $(\Delta X_s^*)_{s\le t}$. Note that the positive jumps of $X_t^*$ can only consist of the positive jumps of $ X_t$ as $\Delta X_s$ and $g_s \Delta U_s (\Delta X_s)^2$ are of the same sign. Similarly, the negative jumps of $X_t^*$ correspond to the negative jumps of $X_t$. The difference between $\Delta X_t^*$ and $\Delta X_t$ are only by magnitude but not by sign.
Hence for each $0<u<1$, $x > 0$, 
\begin{align*}\label{sly1:a}
E \sum_{0<s\le 1} {\bf 1}_{(\Delta X_s + g_s u(\Delta X_s)^2 > x)} 
=  E \sum_{0<s\le 1} {\bf 1}{\left(\Delta X_s> \frac{\sqrt{1+4ux} - 1}{2u}\right)} 
= \pibar^+\left( \frac{\sqrt{1+4ux} - 1}{2u}\right).
\end{align*}
Similarly, for $\Delta X_s < 0$,
\ben\label{sly1:b}
E \sum_{0<s\le 1} {\bf 1}_{(\Delta X_s + g_s u(\Delta X_s)^2 <- x)} 
= 
E \sum_{0<s\le 1} {\bf 1}{\left(\Delta X_s  < \frac{1-\sqrt{1+4ux} }{2u}\right)}
= \pibar^-\left( \frac{\sqrt{1+4ux} -1}{2u}\right) .
\een

Hence the L\'evy measure for $X_t^{*}$, denoted by $\Pi^*$, has tails, respectively, 
\begin{equation*}\label{sly4}
\pibar^{*,+}(x) = \int_0^1E \sum_{0<s\le 1} {\bf 1}_{(\Delta X_s + g_s u(\Delta X_s)^2 > x)} \rmd u 
 = \int_0^1  \pibar^+\left( \frac{\sqrt{1+4ux} - 1}{2u}\right) \rmd u
\end{equation*} and
\begin{equation*}\label{sly4:a}
\pibar^{*,-}(x) = \int_0^1E \sum_{0<s\le 1} {\bf 1}_{(\Delta X_s + g_s u(\Delta X_s)^2 <- x)} \rmd u 
 = \int_0^1  \pibar^-\left( \frac{\sqrt{1+4ux} +1}{2u}\right)\rmd u.
\end{equation*}
Since $\Pi$ has only a countable set of atoms, integration of its tail functions against Lebesgue measure means that $\pibar^{*,+}$ and $\pibar^{*,-}$ are continuous at each $x > 0$.

Recall we have explained in Section \ref{sect:pre} that without the assumption of continuity, \eqref{convStb.asy} or \eqref{convStb.mod} implies $S_t$ is tight at $0$. Also we eliminated the cases when $X_t$ is in the partial domain of attraction of a normal law, so $\sigma^2 = 0$, and also the degenerate case. We deduce that $S_t$ is in the Feller Class at $0$. 
By Maller and Mason 2010 \cite{MM2010} Theorem 2.1, for each subsequence $\{t_k \dto 0\}$, we then have the convergence of the bivariate L\'evy process
\ben\label{sly5}
\left(\frac{X_{t_{k'}}-a_{t_{k'}}}{b_{t_{k'}}} ,\,  \frac{V_{t_{k'}}}{b^2_{t_{k'}}} \right) \todr \left(\III'  , \, \JJJ'  \right),
\een through a further subsequence, denoted by $t_{k'} \dto 0$, where $'$ indicates that the limit depends on the choice of the subsequence. Here $\JJJ'$ is the quadratic variation process corresponding to the L\'evy process constructed from $\III'$.
Hence both $\III'$ and $\JJJ'$ are a.s. finite random variables. 
This implies that the quadratic variation process is of order $b_t^2$ as 
$t \dto 0$ i.e. 
\ben\label{sly4.5}
 V_t :=  \sum_{0< s \le t} (\Delta X_s)^2 = O_p(b_t^2).
\een

Now under the conditions of Theorem \ref{convStb}, let \eqref{convStb.asy} hold. Then observe that
\ben\label{sly6}
{}^{(r,s)}S_t^* := \frac{{}^{(r,s)}X_t^{*} -a_t}{b_t} 
= {}^{(r,s)}S_t +\frac{{}^{(r,s)}X^*_t - {}^{(r,s)}X_t}{ b_t}. 
\een
For each $r \in \N$, let $\Delta X_t^{*(r)\pm} $ be the $r^{th}$ largest positive and negative jumps in $X_t^*$ up till time $t$. Also denote by $\wt{\Delta X}_t^{*,(r)}$ the $r^{th}$ largest modulus jump in $X_t^*$ up till time $t$. 
By construction in \eqref{sly3}, for each $0 < s\le t$, we have $\sgn(\Delta X_s^*) = \sgn(\Delta X_s)$ and $|\Delta X_s^*| = |\Delta X_s| + \Delta U_s  |\Delta X_s|^2 $. Hence $\Delta X_t^{*(r),\pm} \ge \Delta X_t^{(r),\pm}$ a.s. and $|\wt{\Delta X}_t^{*,(r)}|\ge |\wt{\Delta X}_t^{(r)}|$ a.s. with $\wt{\Delta X}_t^{*,(r)}$ having the same sign as $\wt{\Delta X}_t^{(r)}$.
Note that the jumps $(\Delta U_s)_{s\le t}$ of $U$ lie in $[0,1]$. So 
\begin{equation*}\label{sly6:u1}
0\le |\wt{\Delta X}_t^{*,(r)}-\wt{\Delta X}_t^{(r)}| 
 \le (\wt{\Delta X}_t^{(r)})^2 \le V_t = O_p(b_t^2).
\end{equation*}
Similarly, 
\ben\label{sly6:u2}
0\le \Delta X_t^{*(r),\pm} -\Delta X_t^{(r),\pm}  = O_p(b_t^2).
\een
Also 
\be\label{sly6:u3}
0\le |X_t^* - X_t| = |Y_t| \le V_t = O_p(b_t^2).
\ee
From these we conclude that as $t \dto 0$, 
\begin{align*}\label{sly.6a} 
\frac{|{}^{(r,s)}X_t^* -{}^{(r,s)} X_t|}{b_t}
\le \frac{V_t}{b_t} = O_p(b_t) \topr 0 \quad \text{and}\quad
\frac{|{}^{(r)}\wt X_t^* -{}^{(r)} \wt X_t|}{ b_t}  = O_p(b_t) \topr 0.
\end{align*}
Therefore \eqref{convStb.asy} or \eqref{convStb.mod} implies that ${}^{(r,s)}S_t^*$ or ${}^{(r)}\wt S^*_t$ also converges as $t \dto 0$. Since $X_t^*$ has continuous L\'evy measure $\Pi^*$, by our proof in Section \ref{sect:prf}, $S_t^*$ converges as $t \dto 0$. 
Therefore by \eqref{sly6:u3}, $S_t$ also converges as $t \dto 0$.
In this way, we have established the result in general. This finally completes the proof of Theorem \ref{convStb}. 
\halmos

\bigskip \noindent{\bf Acknowledgement.}
The author is very grateful to Prof. Ross Maller and Dr. Boris Buchmann for critically reading the manuscript and giving helpful suggestions. The author also thanks Prof. Alan Sly for suggesting to explore the idea in the last section.


\bibliography{library_levy} 
\bibliographystyle{plain}

\end{document}